% \documentclass{article}
% \usepackage[utf8]{inputenc}
% \usepackage{amsmath,amssymb,amsthm,amscd,graphicx,enumerate}
% \numberwithin{equation}{section}
% \usepackage{geometry}
% \geometry{
%  a4paper,
%  total={160mm,230mm},
%  left=30mm,
%  top=25mm,
% }
% \usepackage{authblk}
% \usepackage{xargs}
% \usepackage[linesnumbered]{algorithm2e}
% \usepackage[most]{tcolorbox}
% \usepackage{todonotes}
% \usepackage[]{subfig}
% \usepackage{float}
% \DeclarePairedDelimiter\ceil{\lceil}{\rceil}
% \DeclarePairedDelimiter\floor{\lfloor}{\rfloor}

\newif\ifarxiv\arxivfalse
\arxivtrue

 \ifarxiv
     \documentclass{article}
 \else
    \documentclass{svjour3}
    \smartqed 
 \fi

\usepackage[utf8]{inputenc}
\usepackage{amsmath,amssymb,amsthm,amscd,graphicx,enumerate}
\numberwithin{equation}{section}
\usepackage{geometry}
\geometry{
 a4paper,
 total={150mm,230mm},
 left=30mm,
 top=25mm,
}
\usepackage{authblk}
\usepackage{xargs}
\usepackage[linesnumbered]{algorithm2e}
\usepackage[most]{tcolorbox}
\usepackage[caption=false]{subfig}
\usepackage{float}
\usepackage{mathtools}

\usepackage{array,tabularx,colortbl,makecell}
\usepackage{tcolorbox}

\DeclarePairedDelimiter\floor{\lfloor}{\rfloor}

\tcbset{colback=yellow!10!white, colframe=black, 
        highlight math style= {enhanced, %<-- needed for the ’remember’ options
            colframe=red,colback=red!10!white,boxsep=0pt}
       }
 
 \newcommand\innprod[2]{\left\langle{}#1{},{}#2{}\right\rangle}%
 \newcommand\func[3]{#1:#2\rightarrow#3}%	     	f: A --> B function \func{f}{A}{B}
 %	f: A ==> B set-valued function \ffunc{f}{A}{B}
 \newcommand\E{\mathbb E}%
 \newcommand\R{\mathbb R}%
 \newcommand\eL{\mathcal{L}}%
 \newcommand\A{\mathcal{A}_H^{p}}%
 \DeclareMathOperator*{\argmin}{argmin}
 \newcommand\PS{\psi_{\bar x, H}^p}%
 \newcommand\br{\beta_\rho}
 \newcommand\prox{\mathrm{prox}_{F/H}^p}%
 \newcommand\sprox{\mathrm{sprox}_{F/H}^p}%
 \newcommand\dom{\mathrm{dom}}%
 \newcommand\set[1]{\left\{{}#1{}\right\}} % \set{x\in\R}[x\leq a]
 \newcommandx\seq[3][{2=k\geq 0},{3={}}]{\{#1\}_{#2}^{#3}}% sequence \seq{x^k}[k=1][n]

 \newtheorem{thm}{Theorem}
 \newtheorem{lem}[thm]{Lemma}
 \newtheorem{prop}[thm]{Proposition}
 
 \newtheorem{rem}[thm]{Remark}
 \newtheorem{exa}[thm]{Example}
 \newtheorem{defin}[thm]{Definition}
 
 \makeatletter
    \@namedef{subjclassname@2020}{%
      \textup{2020} Mathematics Subject Classification}
 \makeatother

 \ifarxiv
    \title{{\bf High-order methods beyond the classical complexity bounds, II: inexact high-order proximal-point methods with segment search}}
    \author{
    {Masoud Ahookhosh\footnote{Department of Mathematics, University of Antwerp, Middelheimlaan 1, B-2020 Antwerp, Belgium.
			E-mail: masoud.ahookhosh@uantwerp.be}} \qquad  {Yurii Nesterov\footnote{Center for Operations Research and Econometrics (CORE) and Department of Mathematical Engineering (INMA), Catholic University of Louvain (UCL), 34voie du Roman Pays, 1348 Louvain-la-Neuve, Belgium. E-mail: Yurii.Nesterov@uclouvain.be\\ This project has received funding from the European Research Council (ERC) under the European Union's Horizon 2020 research and innovation programme (grant agreement No. 788368).}}
    }
    \providecommand{\keywords}[1]{\textbf{Keywords:} #1}
    
    \begin{document}
    \maketitle
    %\vspace{-5mm}
     \begin{abstract}
        A bi-level optimization framework (BiOPT) was proposed in \cite{ahookhosh2020inexact} for convex composite optimization, which is a generalization of bi-level unconstrained minimization framework (BLUM) given in \cite{nesterov2020inexact}. 
        In this continuation paper, we introduce a $p$th-order proximal-point segment search operator which is used to develop novel accelerated methods. Our first algorithm combines the exact element of this operator with the estimating sequence technique to derive new iteration points, which shown to attain the convergence rate $\mathcal{O}(k^{-(3p+1)/2})$, for the iteration counter $k$.
        We next consider inexact elements of the high-order proximal-point segment search operator to be employed in the BiOPT framework. We particularly apply the accelerated high-order proximal-point method at the upper level, while we find approximate solutions of the proximal-point segment search auxiliary problem by a combination of non-Euclidean composite gradient and bisection methods. For $q=\floor{p/2}$, this amounts to a $2q$th-order method with the convergence rate $\mathcal{O}(k^{-(6q+1)/2})$ (the same as the optimal bound of $2q$th-order methods) for even $p$ ($p=2q$) and the superfast convergence rate $\mathcal{O}(k^{-(3q+1)})$ for odd $p$ ($p=2q+1$).
     \end{abstract}
    
     \keywords{Convex composite optimization, High-order proximal-point operator, Segment search, Bi-level optimization framework, Complexity analysis, Optimal and Superfast methods}

\else
 \begin{document}
 \journalname{Mathematical Programming}
%\title{{\bf High-Order Methods Beyond Classical Complexity Bounds for Convex Composite Optimization}}
\title{High-order methods beyond the classical complexity bounds, II: inexact high-order proximal-point methods with segment search}
\titlerunning{High-order methods beyond the classical complexity bounds, II}
\author{%
		Masoud Ahookhosh \and
		Yurii Nesterov%
	}
	\institute{
	M. Ahookhosh
		\at
			Department of Mathematics, University of Antwerp, Middelheimlaan 1, B-2020 Antwerp, Belgium.\\
			\email{masoud.ahookhosh@uantwerp.be}%
	\and 
	Y. Nesterov 
	    \at
              Center for Operations Research and Econometrics (CORE) and Department of Mathematical Engineering (INMA), Catholic University of Louvain (UCL), 34voie du Roman Pays, 1348 Louvain-la-Neuve, Belgium. \\
              \email{Yurii.Nesterov@uclouvain.be} \\          %  
              %Research results presented in this paper are obtained in the framework of Advanced Grant 788368 of the European Research Council.
              This project has received funding from the European Research Council (ERC) under the European Union's Horizon 2020 research and innovation programme (grant agreement No. 788368).
    }
    \maketitle

\begin{abstract}
    A bi-level optimization framework (BiOPT) was proposed in \cite{ahookhosh2020inexact} for convex composite optimization, which is a generalization of bi-level unconstrained minimization framework (BLUM) given in \cite{nesterov2020inexact}. 
    In this continuation paper, we introduce a $p$th-order proximal-point segment search operator which is used to develop novel accelerated methods. Our first algorithm combines the exact element of this operator with the estimating sequence technique to derive new iteration points, which shown to attain the convergence rate $\mathcal{O}(k^{-(3p+1)/2})$, for the iteration counter $k$.
    We next consider inexact elements of the high-order proximal-point segment search operator to be employed in the BiOPT framework. We particularly apply the accelerated high-order proximal-point method at the upper level, while we find approximate solutions of the proximal-point segment search auxiliary problem by a combination of non-Euclidean composite gradient and bisection methods. For $q=\floor{p/2}$, this amounts to a $2q$th-order method with the convergence rate $\mathcal{O}(k^{-(6q+1)/2})$ (the same as the optimal bound of $2q$th-order methods) for even $p$ ($p=2q$) and the superfast convergence rate $\mathcal{O}(k^{-(3q+1)})$ for odd $p$ ($p=2q+1$).
    \keywords{Convex composite optimization\and High-order proximal-point operator\and Segment search\and Bi-level optimization framework\and Complexity analysis\and Optimal methods\and Superfast methods}
\end{abstract}

\fi

%%%%%%%%%%%%%%%%%%%%%%%%%%%%%%%%%%%%%%%%%%%%%%%%%%%%%%%%%%%%%%%%%%%%%%%%%%%%%%%%%%
%%%%%%%%%%%%%%%%%%%%%%%%%%%%%%%%%%%%%%%%%%%%%%%%%%%%%%%%%%%%%%%%%%%%%%%%%%%%%%%%%%
\section{Introduction}\label{sec:intro}

%%%%%%%%%%%%%%%%%%%%%%%%%%%%%%%%%%%%%%%%%%%%%%%%%%%%%%%%%%%%%%%%%%%%%%%%%%%%%%%%%%
\paragraph{{\bf Motivation.}}
Complexity theory plays a crucial role in analysis and assessment of the efficiency of convex optimization algorithms and aims at understanding their worst-case behavior. In a common paradigm, there is a one-to-one correspondence between methodologies and problem classes, which roughly speaking means that for a fixed class of problems (with some fixed set of properties) and a specified method, there is a complexity bound that cannot be improved. This consequently led to the definition of {\it optimal complexity} and {\it optimal methods} in the field of convex optimization. As such, during the last decades, this topic has received much attention in the optimization community. For instance, if a problem is $p$-times differentiable with Lipschitz (H\"older) continuous $p$th derivatives, then the best complexity for $p$th-order methods is of order $\mathcal{O}(\varepsilon^{-2/(3p+1)})$ for the accuracy parameter $\varepsilon>0$; see, e.g., \cite{ahookhosh2019accelerated,ahookhosh2017optimal,ahookhosh2018solving,nesterov2005smooth,nesterov2013gradient,nesterov2015universal,neumaier2016osga} for $p=1$ and \cite{agarwal2018lower,arjevani2019oracle,birgin2017worst,gasnikov2019optimal,jiang2019optimal} for an arbitrary $p$.

Recently, the classical optimal complexity paradigm has been challenged in several angles. In the first attempt, a {\it second-order method} with the convergence rate $\mathcal{O}(k^{-4})$ has been proposed in \cite{nesterov2020superfast}, which is faster than the classical lower bound $\mathcal{O}(k^{-7/2})$ for second-order methods. In this method, a third-order tensor method was implemented, where its auxiliary problem is handled by a Bregman gradient method requiring second-order oracles; however, the Lipschitz continuity of third derivatives was assumed while in the classical setting we only require the Lipschitz continuity of Hessian. As such, the convergence rate $\mathcal{O}(k^{-4})$ for this method is not a contradiction with classical complexity theory for second-order methods. 

More recently, a {\it bi-level unconstrained minimization} (BLUM) framework was proposed by Nesterov in \cite{nesterov2020inexact} using the {\it high-order proximal-point operator}
\begin{equation}\label{eq:prox0}
    \mathrm{prox}_{f/H}^p(\bar x)=\argmin_{x\in\E} \set{f(x)+\tfrac{H}{p+1} \|x-\bar x\|^{p+1}},
\end{equation}
for $p\geq 1$, $\bar x \in\E$, and a convex function $\func{f}{\E}{\R}$. This framework involves two levels of methodologies, where the upper-level involves a scheme using the $p$the-order proximal-point operator (for arbitrary $p$), and the lower-level is a scheme for finding an inexact solution of the corresponding proximal-point minimization. Applying the BLUM framework to twice smooth unconstrained problems with $p=3$, using a Nesterov-type accelerated scheme at the upper level, and solving the auxiliary problem by a Bregman gradient method lead to a second-order method with the superfast convergence rate $\mathcal{O}(k^{-4})$. The BLUM framework was further extended in \cite{nesterov2020inexactSS} by replacing \eqref{eq:prox0} with the {\it high-order proximal-point segment search operator}  
\begin{equation*}\label{eq:proxLS0}
    \mathrm{sprox}_{f/H}^p(\bar x, \bar u)=\argmin_{x\in\E,~\tau\in[0,1]} \set{f(x)+ \tfrac{H}{p+1} \|x-\bar x-\tau \bar u\|^{p+1}},
\end{equation*}
for a direction $\bar u\in\E$. Executing the BLUM framework with segment search to twice smooth unconstrained problems with $p=3$, employing a Nesterov-type accelerated method at the upper level and a Bregman gradient method in the lower level, results to a second-order method with the superfast convergence rate $\mathcal{O}(k^{-5})$.

In the predecessor of the current paper (i.e., \cite{ahookhosh2020inexact}), we recently introduced a \textit{Bi-level OPTimization} (BiOPT) framework for convex composite minimization, which is a generalization the BLUM framework. Similarly, the BiOPT framework involves two levels of methodologies. While, at the upper level of BiOPT, the objective function is regularized by the $(p+1)$th-order proximal term resulting in a \textit{$p$th-order composite proximal-point operator}
\begin{equation}\label{eq:prox00}
    \prox(\bar x)=\argmin_{x\in\dom \psi} \set{f(x)+\psi(x)+\tfrac{H}{p+1} \|x-\bar x\|^{p+1}},
\end{equation}
for $H>0$ and $p\geq 1$. For $p=1$, this operator reduces to the classical proximal operator introduced by Martinet in \cite{martinet1970breve,martinet1972determination} that was further studied by Rockafellar \cite{rockafellar1976monotone} where $H$ is replaced by a sequence of positive numbers $\seq{H_k}$.
At the lower level the corresponding proximal-point auxiliary problem is solved approximately either by a single iteration of the $p$th-order tensor method or by a lower-order non-Euclidean composite gradient scheme, for an arbitrary $p$. For $q=\floor{p/2}$, employing the accelerated proximal-point method at the upper level and the non-Euclidean composite gradient scheme at the lower level, the proposed $2q$th-order method attains the convergence rate $\mathcal{O}(k^{-(p+1)})$.  We note that this method for $p=3$ leads to a superfast method, while for the other choices of $p$ it is suboptimal. In the current work, we continue the same stream and introduce some methods attaining the optimal rate for even $p$ and a superfast rate for odd $p$.

\vspace{-4mm}
%%%%%%%%%%%%%%%%%%%%%%%%%%%%%%%%%%%%%%%%%%%%%%%%%%%%%%%%%%%%%%%%%%%%%%%%%%%%%%%%%%
\subsection{Content}\label{sec:content}
The main goal of this paper is to introduce a high-order proximal-point segment search operator for convex composite minimization and to extend the BiOPT framework by replacing the the high-order proximal-point operator with its segment search counterpart. Our objective function is sum of two convex functions, where both can be nonsmooth. We first  regularize the objective function by a power of the Euclidean norm $\|\cdot\|^{p+1}$ with $p\geq 1$, which includes a segment search along some specific direction. A minimization of this augmented function leads to a high-order proximal-point segment search operator, which will be minimized approximately in a reasonable cost. In Section~\ref{sec:exact}, we design our first algorithm  that is a combination of the high-order proximal-point segment search operator with the estimating sequence technique attaining the convergence rate $\mathcal{O}(k^{-(3p+1)/2})$ (see Theorem~\ref{thm:aihoppaSSConvRate}), for the iteration counter $k$. 

We present our second algorithm in Section~\ref{sec:ihoppSegmSearch},
which is based on inexact solution of high-order proximal-point segment search minimization. The algorithm involves a Nesterov-type acceleration at the upper level and a combination of inexact solution of high-order proximal-point minimization \eqref{eq:prox00} and a bisection scheme (for applying the segment search) in the lower level. We show the convergence rate $\mathcal{O}(k^{-(3p+1)/2})$ (see Theorem~\ref{thm:convRateAlg6}) for this inexact method. We finally derive a bound on the primal-dual gap that can be used as an implementable stopping criterion for the proposed method (see Proposition~\ref{prop:primDualGap}). 

We next extend the bi-level optimization framework (BiOPT) given in \cite{ahookhosh2020inexact} by applying the high-order proximal-point segment search minimization. The BiOPT involves two levels of methodologies. At the upper level, for an arbitrary $p$, we develop an inexact accelerated high-order proximal-point schemes with segment search using the estimating sequence technique. At the lower level, we use a combination of the non-Euclidean composite gradient scheme and the bisection technique to find an element of the high-order proximal-point segment search operator. 
To implement the non-Euclidean composite gradient algorithm, we need a $2q$th-order oracle of the scaling function (with $q=\floor{p/2}$), i.e., the proposed algorithm is a $2q$th-order method. If $p$ is even ($p=2q$), we end up with a $2q$th-order method with the convergence rate $\mathcal{O}(k^{-(6q+1)/2})$, which is the same as the optimal convergence rate of $2q$th-order methods. For odd $p$ ($p=2q+1$), we end up with a $2q$th-order method with the superfast convergence rate $\mathcal{O}(k^{-(3q+1)})$. We note that it is not contradicting the classical complexity theory because the proposed method requires the boundedness of the $(p+1)$th derivative of the smooth part of the objective function, which is stronger than the Lipschitz continuity of $2q$th-order derivative needed in classical setting (see Table~\ref{tab:complexity} for more details). 

We finally give some conclusion in Section~\ref{sec:conclusion}.

%%%%%%%%%%%%%%%%%%%%%%%%%%%%%%%%%%%%%%%%%%%%%%%%%%%%%%%%%%%%%%%%%%%%%%%%%%%%%%%%%%
\subsection{Notation and generalities} \label{sec:notation}
In what follows, we denote by $\E$ a finite-dimensional real vector space, and by $\E^*$ its dual spaced composed by linear functions on $\E$. For such a function $s\in \E^*$, we denote by $\innprod{s}{x}$ its value at $x\in \E$.

Let us measure distances in $\E$ and $\E^*$ in a Euclidean norm. For that, using a self-adjoint positive-definite operator $\func{B}{\E}{\E^*}$ (notation $B=B^* \succ 0$), we define
\begin{align*}
    \|x\|=\innprod{Bx}{x}^{1/2},\quad x\in \E,\quad \|g\|_{*} = \innprod{g}{B^{-1}g}^{1/2}, \quad g\in \E^*.
\end{align*}
Sometimes, it will be convenient to treat $x\in \E$ as a linear operator from $\R$ to $\E$, and $x^*$ as a linear operator from $\E^*$ to $\R$. In this case, $xx^*$ is a linear operator from $\E^*$ to $\E$, acting as follows:
\begin{align*}
    (xx^*)g = \innprod{g}{x} x\in\E,\quad g\in\E^*.
\end{align*}
For a smooth function $\func{f}{\E}{\R}$, we denote by $\nabla f(x)$ its gradient and by $\nabla^2 f(x)$ its Hessian evaluated at point $x\in \E$. Note that
\begin{align*}
    \nabla f(x)\in\E^*, \quad \nabla^2 f(x)h\in\E^*, \quad x,h\in\E.
\end{align*}
We denote by $\ell_{\bar x}(\cdot)$ the linear model of convex function $f(\cdot)$ at point $\bar x\in\E$ given by
\begin{equation}
    \ell_{\bar x}(x) = f(\bar x)+\innprod{\nabla f(\bar x)}{x-\bar x}, \quad x\in\E.
\end{equation}
Using the above norm, we can define the standard Euclidean prox-functions
\begin{align*}
    d_{p+1}(x)=\tfrac{1}{p+1}\|x\|^{p+1}, \quad x\in\E.
\end{align*}
where $p\geq 1$ is an integer parameter. These functions have the following derivatives:
\begin{equation}\label{eq:dp1Der}
    \begin{array}{rcl}
        \nabla d_{p+1}(x) &= &\|x\|^{p-1}Bx, \quad x\in\E, \\
        \nabla^2 d_{p+1}(x) &= &\|x\|^{p-1}B+(p-1)\|x\|^{p-3}Bxx^*B \succeq \|x\|^{p-1}B. 
    \end{array}
\end{equation}
Note that the function $d_{p+1}(\cdot)$ is $2^{2-p}$-uniformly convex (see, for example, \cite[Lemma 4.2.3]{nesterov2018lectures}):
\begin{equation}\label{eq:dp1LowerBound}
    d_{p+1}(y) \geq d_{p+1}(x)+\innprod{d_{p+1}(x)}{y-x}+\tfrac{2^{2-p}}{p+1} \|y-x\|^{p+1}, \quad x,y\in \E.
\end{equation}

In what follows, we often work with directional derivatives. For $p\geq 1$, denote by 
\begin{align*}
    D^pf(x)[h_1,\ldots,h_p]
\end{align*}
the directional derivative of function $f$ at $x$ along directions $h_i\in\E$, $i= 1,...,p$. Note that $D^pf(x)[\cdot]$ is a asymmetric $p$-linear form. Its norm is defined in a standard way:
\begin{equation}\label{eq:dpfhiNorm}
    \|D^pf(x)\| = \max_{h_1,\ldots,h_p} \set{\left|D^pf(x)[h_1,\ldots,h_p]\right| ~:~ \|h_i\|\leq 1,~ i=1,\ldots,p}.
\end{equation}
If all directions $h_1,\ldots,h_p$ are the same, we apply the notation
\begin{align*}
    D^pf(x)[h]^p,\quad h\in\E.
\end{align*}
Note that, in general, we have (see, for example, \cite[Appendix 1]{nesterov1994interior})
\begin{equation}\label{eq:dpfhpNorm}
    \|D^pf(x)\| = \max_{h} \set{\left|D^pf(x)[h]^p\right| ~:~ \|h\|\leq 1}.
\end{equation}
If the considered function belongs to the problem class $\mathcal{F}_p$, convex and $p$ times continuously differentiable functions on $\E$, then we denote by $M_p(f)$ its uniform upper bound for its $p$th derivative:
\begin{equation}\label{eq:mpf}
    M_p(f)=\sup_{x\in\E} \|D^pf(x)\|.
\end{equation}

% For a convex function $\func{h}{\E}{\mathbb{R}}$, we define its {\it conditional Fenchel dual} by
% \begin{equation}\label{eq:condFenchel}
%     h_*(s)=\sup_{x\in Q} \set{\innprod{s}{x}-h(x)},
% \end{equation}
% where $\nabla h_*(s)=\mathrm{argmax}_{x\in Q} \set{\innprod{s}{x}-h(x)}$.

%%%%%%%%%%%%%%%%%%%%%%%%%%%%%%%%%%%%%%%%%%%%%%%%%%%%%%%%%%%%%%%%%%%%%%%%%%%%%%%%%%
%%%%%%%%%%%%%%%%%%%%%%%%%%%%%%%%%%%%%%%%%%%%%%%%%%%%%%%%%%%%%%%%%%%%%%%%%%%%%%%%%%
\section{Exact high-order proximal-point segment search methods}\label{sec:exact}
Let us consider the convex composite minimization problem
\begin{equation}\label{eq:prob}
    \min_{x\in\dom \psi}~\set{F(x)=f(x)+\psi(x)},
\end{equation}
where $\func{f}{\E}{\R}$ is a closed convex and possibly non-differentiable function, and $\func{\psi}{\E}{\R}$ is a simple subdifferentiable closed convex function on its domain such that $\dom \psi\subseteq \mathrm{int}(\dom f)$. We further assume that \eqref{eq:prob} has at least one optimal solution $x^*\in\dom \psi$ and denote $F^*=F(x^*)$. Note that this class of problems is general enough to encapsulate wide variety problems from many applications fields. As an important example, for the simple closed convex set $Q\subseteq\E$, the simple constrained problem
\begin{align*}
    \begin{array}{ll}
        \min & f(x) \\
        \mathrm{s.t.} & x\in Q
    \end{array}
\end{align*}
can be rewritten as a composite minimization \eqref{eq:prob} with $F(x)=f(x)+\delta_Q(x)$, where $\delta_Q(\cdot)$ is the indicator function of the set $Q$ and $\dom \psi = Q$.
  
Let us define the \textit{$p$th-order composite proximal-point segment search} operator $\func{\sprox}{\dom \psi\times\E}{\E\times\R}$ given by
\begin{equation}\label{eq:proxLS}
    \displaystyle \sprox(\bar x, \bar u)=\argmin_{x\in\dom \psi,~\tau\in[0,1]} \set{\PS(x)=f(x)+\psi(x)+H d_{p+1}(x-\bar x-\tau \bar u)},
\end{equation}
where a point $\bar x\in\E$, a direction $\bar u\in\E$, and a regularization parameter $H>0$ are given. 
Setting $y=x-\tau \bar u$, the problem \eqref{eq:proxLS} can be translated to the minimization problem
\begin{equation}\label{eq:constProx}
    \min_{y\in\E, \tau\in [0,1]} \set{f(y+\tau \bar u)+\psi(y+\tau \bar u)+H d_{p+1}(y-\bar x)},
\end{equation}
where $(x_+,\tau_+)$ is its solution with $x_+=x(\bar x,\bar u)$ and $\tau_+=\tau(\bar x,\bar u)$. Writing the first-order optimality conditions for this constrained problem with respect to $\tau$, there exists a subgradient $g_+\in\partial\psi(x_+)$ such that
\begin{equation}\label{eq:foOptConds}
    (\tau-\tau_+)\innprod{\nabla f(x_+)+g_+}{\bar u}\geq 0 \quad \forall \tau\in [0,1],
\end{equation}
see, e.g., \cite[Theorem~3.1.24]{nesterov2018lectures}. In addition, setting $y_+=\bar x+\tau_+ \bar u$ and $r_+=\|x_+-y_+\|$, the optimality conditions for \eqref{eq:proxLS} with respect to $x$ implies that
\begin{align*}
    \innprod{\nabla f(x_+)+Hr_+^{p-1} B(x_+-y_+)}{x-x_+}+\psi(x) \geq \psi(x_+),
\end{align*}
i.e., $g_+=-\nabla f(x_+)-Hr_+^{p-1} B(x_+-y_+)\in\partial\psi(x_+)$, which eventually leads to
\begin{equation}\label{eq:inprodnablaFY+}
    \innprod{\nabla f(x_+)+g_+}{y_+-x_+} = Hr_+^{p+1}=\left(\tfrac{1}{H}\right)^{\tfrac{1}{p}} \|\nabla f(x_+)+g_+\|_*^{\tfrac{p+1}{p}}.
\end{equation}

\begin{rem}[exact $p$th-order composite proximal-point segment search] \label{rem:exactSol}
	Let us write the optimality conditions for the minimization problem \eqref{eq:proxLS}, i.e.,
	\begin{equation}\label{eq:opcProxLS}
	\left\{
		\begin{array}{l}
		\nabla f(x_+)+\partial \psi(x_+)+H \|x_+-\bar x-\tau_+ \bar u\|^{p-1} B(x_+-\bar x-\tau_+ \bar u) \ni 0,\vspace{2mm}\\
		-H \|x_+-\bar x-\tau_+ \bar u\|^{p-1}\innprod{x_+-\bar x-\tau_+ \bar u}{\bar u}+N_{[0,1]}(\tau_+) \ni 0,
		\end{array}
	\right.
	\end{equation}
	where
	\begin{equation}\label{eq:normalCone01}
		N_{[0,1]}(\tau_+) = \left\{
		\begin{array}{ll}
			0                      &~~ \mathrm{if}~ \tau_+\in (0,1),\\
			\left(-\infty,0\right] &~~ \mathrm{if}~ \tau_+=0,\\
			\left[0,+\infty\right) &~~ \mathrm{if}~ \tau_+=1,\\
			\emptyset              &~~ \mathrm{otherwise}.
		\end{array}
		\right.
	\end{equation}
	It follows from the first inclusion in \eqref{eq:opcProxLS} that there exists a subgradient $g_+\in\partial \psi(x_+)$ such that
	\begin{equation}\label{eq:optCondsEq}
		\nabla f(x_+)+g_++H \|x_+-\bar x-\tau_+ \bar u\|^{p-1} B(x_+-\bar x-\tau_+ \bar u) = 0,
	\end{equation}
	which clearly implies that there exists a constant $\theta$ such that $B(x_+-\bar x-\tau_+ \bar u)1=\theta (\nabla f(x_+)+g_+)$. Substituting this in the first inclusion of \eqref{eq:opcProxLS} results to
	\begin{align*}
		\theta = H^{-\tfrac{1}{p}} \|\nabla f(x_+)+g_+\|^{\tfrac{1-p}{p}}.
	\end{align*}
	We now consider three cases: (i) $\tau_+\in (0,1)$; (ii) $\tau_+=0$; and (iii) $\tau_+=1$. In Case (i), it follows from the second inclusion in \eqref{eq:opcProxLS} that
	\begin{align*}
		\tau_+= \innprod{x-\bar x}{\tfrac{\bar u}{\|\bar u\|^2}}.
	\end{align*}
	In Cases (ii) and (iii), this inclusion leads to the inequalities $\innprod{x-\bar x}{\bar u}\geq 0$ and $\innprod{x-\bar x-\bar u}{\bar u}\leq 0$, respectively. Accumulating these three conditions $(x_+,\tau_+)$ can be computed from following relations
	\begin{equation}
		x_+ = \left\{
		\begin{array}{ll}
			\bar x + \theta \left(I-\tfrac{\bar u \bar u^T}{\|\bar u\|^2}\right)^{-1} (\nabla f(x_+)+g_+) &~~ \mathrm{if}~ \innprod{x_+-\bar x}{\bar u/\|\bar u\|^2}\in (0,1),\vspace{1mm}\\
			\bar x + \theta (\nabla f(x_+)+g_+) &~~ \mathrm{if}~ \innprod{x_+-\bar x}{\bar u}\geq 0,\vspace{2mm}\\
			\bar x + \bar u + \theta (\nabla f(x_+)+g_+) &~~ \mathrm{if}~ \innprod{x_+-\bar x}{\bar u}\leq 0,
		\end{array}
		\right.
	\end{equation}
	where efficient computation of $(x_+,\tau_+)$ are highly connected to the structure of functions $f(\cdot)$ and $\psi(\cdot)$.
\end{rem}

\begin{exa}[computing $\sprox(\bar x, \bar u)$ for an one-dimensional problem] \label{exa:exactSol}
Let us consider the minimization of the one-dimensional function $\func{F}{\R}{\R}$ given by $F(x)=\tfrac{1}{2}x^2+|x|$, where $x^*=0$ is its unique solution. In the setting of the problem \eqref{eq:prob}, we have $f(x)=\tfrac{1}{2}x^2$ and $\psi(x)=|x|$. We set $p=3$ and $H=1$, and verify an exact solution for the third-order composite proximal-point segment search minimization. Invoking the normal cone \eqref{eq:normalCone01}, we consider three cases: (i) $\tau_+\in (0,1)$; (ii) $\tau_+=0$; and (iii) $\tau_+=1$. In Case (i), we have $\tau_+= (x_+-\bar x)/\bar u$. If $x_+>0$, then $g_+=1$ and from \eqref{eq:opcProxLS} we obtain
\begin{align*}
	x_++1+|x_+-\bar x-\tau_+\bar u|^2(x_+-\bar x-\tau_+\bar u)=0
\end{align*}
leading to $x_+=-1$ that contradicts to $x_+>0$. If $x_+<0$, then $g_+=-1$ and it follows from \eqref{eq:opcProxLS} that
\begin{align*}
	x_+-1+|x_+-\bar x-\tau_+\bar u|^2(x_+-\bar x-\tau_+\bar u)=0
\end{align*}
implying $x_+=1$ that contradicts to $x_+<0$. In case $x_+=0$, if $\tau_+=-\bar x/\bar u\in (0,1)$, then $g_+=0$. Let us now consider Case~(ii) ($\tau_+=0$), where it can be deduced from \eqref{eq:opcProxLS} that 
\begin{align*}
	g_+=1, \quad x_++1+|x_+-\bar x|^2(x_+-\bar x)=0
\end{align*}
if $x_+>0$. This equation has a solution if $\bar x>-1$ (see Subfigure (a) of Figure~\ref{fig:fig1}). For $x_+<0$, we have 
\begin{align*}
	g_+=-1, \quad x_+-1+|x_+-\bar x|^2(x_+-\bar x)=0.
\end{align*}
This equation has a solution if $\bar x<1$ (see Subfigure (b) of Figure~\ref{fig:fig1}). If $x_+=0$, then $g_+=|\bar x|\bar x$. Similarly, in Case~(iii), $\tau_+=1$, it follows from \eqref{eq:opcProxLS} that
\begin{align*}
	g_+=1, \quad x_++1+|x_+-\bar x-\bar u|^2(x_+-\bar x-\bar u)=0
\end{align*}
if $x_+>0$. This equation has a solution if $\bar x+\bar u>-1$. For $x_+<0$, we have 
\begin{align*}
	g_+=-1, \quad x_+-1+|x_+-\bar x-\bar u|^2(x_+-\bar x-\bar u)=0.
\end{align*}
This equation has a solution if $\bar x+\bar u<1$.If $x_+=0$, then $g_+=|\bar x+\bar u|(\bar x+\bar u)$. Summarizing above discussion, we come to
\begin{equation*}
	(x_+,\tau_+) = \left\{
	\begin{array}{ll}
		x_+=0,~ \tau_+=-\bar x/\bar u &~~ \mathrm{if}~ -\bar x/\bar u\in (0,1),\\
		x_+~\mathrm{is~ the~ larger~ root~ of}~ |x-\bar x|^3=|x+1|,~ \tau_+=0 &~~ \mathrm{if}~ \bar u\geq 0~ \& ~\bar x >-1,\\
		x_+~\mathrm{is~ the~ smaller~ root~ of}~ |x-\bar x|^3=|x-1|,~ \tau_+=0 &~~ \mathrm{if}~ \bar u\leq 0~ \& ~\bar x <1,\\
		x_+~\mathrm{is~ the~ larger~ root~ of}~ |x-\bar x-\bar u|^3=|x+1|,~ \tau_+=1 &~~ \mathrm{if}~ \bar u\geq 0~ \& ~\bar x+\bar u >-1,\\
		x_+~\mathrm{is~ the~ smaller~ root~ of}~ |x-\bar x-\bar u|^3=|x-1|,~ \tau_+=1 &~~ \mathrm{if}~ \bar u\leq 0~ \& ~\bar x+\bar u <1,
	\end{array}
	\right.
\end{equation*}
which implies that finding exact solution of $p$th-order proximal-point segment search operator can be complicated or costly even in this one-dimensional problem.
\vspace{-5mm}
    \begin{figure}[H]
        \subfloat[\vspace{-2mm} Solution for $\bar x=-0.5$ and $x_+>0$.]{\includegraphics[width= 7.4cm]{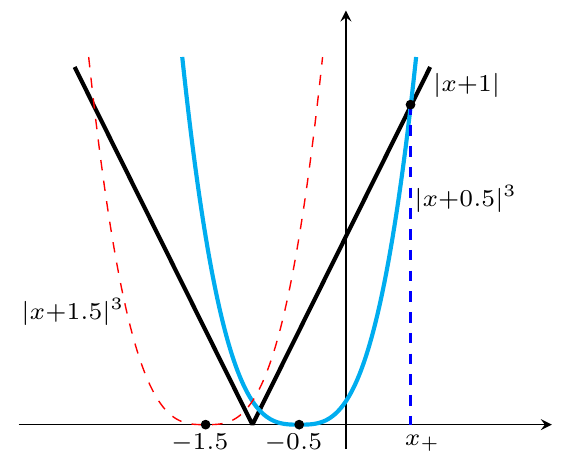}}\qquad
        \subfloat[\vspace{-2mm} Solution for $\bar x=0.5$ and $x_+<0
        455$.]{\includegraphics[width= 7.4cm]{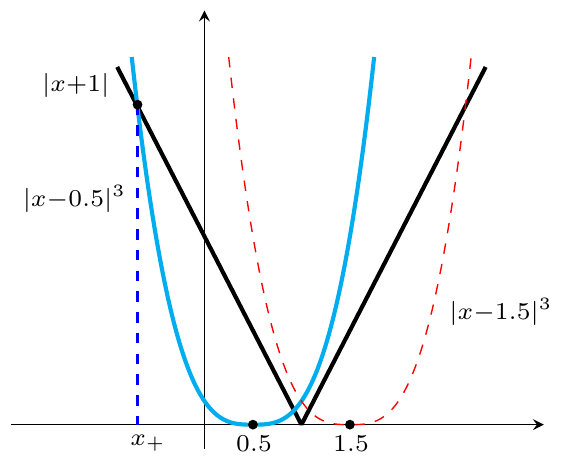}}\\
        %\vspace{-1mm}
        \caption{
        Subfigure (a) shows that for $\bar x >-1$ there is a positive solution, while for $\bar x \leq -1$ there is no positive solution, and Subfigure (b) illustrates that for $\bar x <1$ there is a negative solution, while for $\bar x \geq 1$ there is no negative solution.\label{fig:fig1}}
    \end{figure}
\end{exa}

 We now aim at developing our first algorithm based on a combination of the $p$th-order composite proximal-point segment search operator \eqref{eq:proxLS} and an {\it estimating sequence} technique; see, e.g., \cite{nesterov2018lectures}.
To this end, let $\seq{A_k}$ be a sequence of positive numbers generated by $A_{k+1}=A_k+a_{k+1}$ for $a_k>0$. The idea of estimating sequences techniques is to generate a sequence of estimation functions $\seq{\Psi_k(x)}$ of $F$ in such a way that, at each iteration $k\geq 0$, the inequality
\begin{equation}\label{eq:estSeqIneq}
    A_k F(x_k)\leq \Psi_k^*\equiv \min_{x\in\dom\psi} \Psi_k(x), \quad k\geq 0.
\end{equation}
Following \cite{nesterov2020inexact,nesterov2020superfast}, let us set
\begin{equation}\label{eq:Ak}
    A_{k}= \left(\tfrac{c_p}{2}\right)^p \left(\tfrac{k}{p+1}\right)^{p+1}, \quad a_{k+1}=A_{k+1}-A_k, \quad k\geq 0,
\end{equation}
for $c_p=\left(\tfrac{1-\beta}{H}\right)^{1/p}$. Moreover, let $x_0, y_k\in\E$ and $(x_{k+1},\tau_k)$ be a solution of \eqref{eq:proxLS}, and let us define the \textit{estimating sequence} 
\begin{equation}\label{eq:estSeq}
    \Psi_{k+1}(x)=\left\{
    \begin{array}{ll}
        \tfrac{1}{2} \|x-x_0\|^2 &~~\mathrm{if}~ k=0,\vspace{2mm}\\
        \Psi_k(x)+a_{k+1}[\ell_{x_{k+1}}(x)+\psi(x)] &~~\mathrm{if}~ k\geq 1.  
    \end{array}
    \right.
\end{equation}
We here assume that the minimization problem \eqref{eq:proxLS} is solved exactly. A combination of such exact solution with the estimating sequence \eqref{eq:estSeq} leads to Algorithm~\ref{alg:aihoppaSS}.

%\vspace{3mm}
%%%%%%%%%%%%%%%%%%%%%%
\RestyleAlgo{boxruled}
\begin{algorithm}[ht!]
\DontPrintSemicolon \KwIn{$x_{0}\in\dom\psi$,~ $H>0$,~ $\upsilon_0=x_0$,~ $A_0=0$,~ $\Psi_0=\tfrac{1}{2}\|x-x_0\|^2$,~ $k=0$;} 
\Begin{ 
    \While {stopping criterion does not hold}{ 
        Set $u_k=\upsilon_k-x_k$;\;
        Compute $(x_{k+1},\tau_k, g)=\sprox(x_k,u_k)$;\;
        % Find the solution $\widehat{x}_k$ of the minimization problem
        % \begin{equation}\label{eq:subprob}
        %     \min_{x\in\E} \left\{a_{k+1}[f(x_{k+1})+\innprod{\nabla f(x_{k+1})}{x-x_{k+1}}+\psi(x)] +\tfrac{1}{2}\|x-\upsilon_k\|^2\right\}
        % \end{equation}
        % and set $g_k = \|\upsilon_k-\widehat{x}_k\|$;\;
        Set $g_k=\|\nabla f(x_{k+1})+g\|_*$ for some $g=-\nabla f(x_{k+1})-H\|x_{k+1}-(x_k+\tau_ku_k)\|^{p-1}B(x_{k+1}-(x_k+\tau_ku_k))\in \partial\psi(x_{k+1})$;\;
        Compute $a_{k+1}$ by solving the equation $\tfrac{a_{k+1}^2}{A_{k+1}+a_{k+1}}=\left(\tfrac{1}{H}\right)^{1/p}g_k^{(1-p)/p}$;\;
        Update $\Psi_{k+1}(x)$ by \eqref{eq:estSeq} and set $A_{k+1}=A_k+a_{k+1}$;\;
        Compute $\upsilon_{k+1}=\argmin_{x\in\dom\psi} \Psi_{k+1}(x)$;\;
     } 
}
\caption{Exact High-Order Proximal-Point Segment Search Algorithm\label{alg:aihoppaSS}}
\end{algorithm}

The following two lemmas are essential for giving the convergence rate of the sequence $\seq{x_k}$ generated by Algorithm \ref{alg:aihoppaSS}.

%%%%%%%%%%%%%%%%%%%%%%%%%%%%%%%%%%%%%%%%%%%%%%%%
\begin{lem}\cite[Lemma 9]{nesterov2020inexactSS}
\label{lem:ineqPositNum}
    Let the positive numbers $\{\xi_i\}_{i=1}^N$ satisfy the inequality
    \begin{equation}\label{eq:ineqPositNum1}
        \sum_{i=1}^N \omega_i \xi_i^\gamma \leq B,
    \end{equation}
    where $\gamma\geq 1$ and all weights $\omega_i$ are positive. Then,
    \begin{equation}\label{eq:ineqPositNum2}
        \sum_{i=1}^N \tfrac{1}{\xi_i}\geq \tfrac{1}{B^{1/\gamma}} \left(\sum_{i=1}^N\omega_i^{\tfrac{1}{1+\gamma}}\right)^{\tfrac{1+\gamma}{\gamma}}.
    \end{equation}
\end{lem}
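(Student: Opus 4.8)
The plan is to recognize \eqref{eq:ineqPositNum2} as a direct consequence of H\"older's inequality, after an appropriate factorization of the weights. The key observation is that the exponent $1/(1+\gamma)$ appearing on the right-hand side is precisely the reciprocal of the conjugate exponent pairing the two quantities we control: the products $\omega_i \xi_i^\gamma$ (whose weighted sum is bounded by $B$) and the reciprocals $1/\xi_i$ (whose sum we wish to bound from below).

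First I would write, for each index $i$,
\[
    \omega_i^{\frac{1}{1+\gamma}} = \left(\omega_i \xi_i^\gamma\right)^{\frac{1}{1+\gamma}} \left(\tfrac{1}{\xi_i}\right)^{\frac{\gamma}{1+\gamma}},
\]
which holds because the powers of $\xi_i$ cancel ($\tfrac{\gamma}{1+\gamma}-\tfrac{\gamma}{1+\gamma}=0$) and the two fractional exponents $\tfrac{1}{1+\gamma}$ and $\tfrac{\gamma}{1+\gamma}$ sum to one. Summing over $i$ and applying H\"older's inequality with the conjugate pair $p=1+\gamma$ and $q=\tfrac{1+\gamma}{\gamma}$ (note $\tfrac1p+\tfrac1q=1$, and both exceed one since $\gamma\geq 1$) yields
\[
    \sum_{i=1}^N \omega_i^{\frac{1}{1+\gamma}} \leq \left(\sum_{i=1}^N \omega_i \xi_i^\gamma\right)^{\frac{1}{1+\gamma}} \left(\sum_{i=1}^N \tfrac{1}{\xi_i}\right)^{\frac{\gamma}{1+\gamma}}.
\]

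Next I would invoke the hypothesis \eqref{eq:ineqPositNum1} to replace the first factor on the right by $B^{1/(1+\gamma)}$, isolate the factor $\bigl(\sum_i 1/\xi_i\bigr)^{\gamma/(1+\gamma)}$, and finally raise both sides to the power $(1+\gamma)/\gamma$. Tracking the exponent on $B$ gives $B^{-1/\gamma}$, which produces exactly \eqref{eq:ineqPositNum2}.

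I do not anticipate any genuine obstacle: the entire content is the correct choice of factorization and conjugate exponents. The only points requiring a moment of care are verifying that $p=1+\gamma$ and $q=(1+\gamma)/\gamma$ form an admissible conjugate pair (both strictly greater than one, which uses $\gamma\geq 1$), and confirming that every quantity is positive so that the fractional powers and the division are well defined — both guaranteed by the positivity of $\{\xi_i\}$ and $\{\omega_i\}$.
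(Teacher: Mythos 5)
Your proof is correct, and it follows essentially the same route as the source of this lemma (the paper itself states it without proof, citing Lemma~9 of \cite{nesterov2020inexactSS}, where the argument is precisely this one): factor $\omega_i^{1/(1+\gamma)}=(\omega_i\xi_i^\gamma)^{1/(1+\gamma)}(1/\xi_i)^{\gamma/(1+\gamma)}$, apply H\"older's inequality with the conjugate pair $1+\gamma$ and $(1+\gamma)/\gamma$, bound the first factor by $B^{1/(1+\gamma)}$ using the hypothesis, and rearrange. All exponent bookkeeping in your write-up checks out, so nothing further is needed.
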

%%%%%%%%%

%%%%%%%%%%%%%%%%%%%%%%%%%%%%%%%%%%%%%%%%%%%%%%%%%
\begin{lem}\cite[Lemma 10]{nesterov2020inexactSS}
\label{lem:ineqPositSeq}
    Let the sequence of positive numbers $\{\eta_k\}_{k\geq 0}$ satisfy the inequality
    \begin{equation}\label{eq:ineqPositSeq1}
        \eta_k\geq \theta\left(\sum_{i=1}^N \eta_k^\alpha\right)^\beta,\quad k\geq 1,
    \end{equation}
    where $\theta>0$, $\alpha,\beta\geq 0$, and $\alpha\beta\leq 1$. Then,
    \begin{equation}\label{eq:ineqPositSeq2}
        \eta_k \geq \theta^{\tfrac{1}{1-\alpha\beta}}\left(\alpha\beta+(1-\alpha\beta)k\right)^{\tfrac{\beta}{1-\alpha\beta}},\quad k\geq 1.
    \end{equation}
\end{lem}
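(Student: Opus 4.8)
The plan is to reduce the one-sided recursion to a telescoping inequality for the partial sums and then extract a linear lower bound by exploiting the concavity of a suitable power. First I would introduce the partial sums $S_k=\sum_{i=1}^k\eta_i^\alpha$ with $S_0=0$, so that $S_k-S_{k-1}=\eta_k^\alpha$ and (reading the summand in \eqref{eq:ineqPositSeq1} as $\eta_i^\alpha$, since the stated right-hand side must grow with $k$) the hypothesis becomes $\eta_k\geq\theta S_k^\beta$. Raising this to the power $\alpha$ and writing $\gamma:=\alpha\beta\leq 1$ gives
\[
    S_k-S_{k-1}=\eta_k^\alpha\geq\theta^\alpha S_k^{\alpha\beta}=\theta^\alpha S_k^{\gamma},\qquad k\geq 1.
\]

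Next I would linearize using the concave, increasing map $\phi(t)=t^{1-\gamma}$ on $(0,\infty)$, which is exactly where the assumption $\gamma\leq 1$ enters. For $S_k\geq S_{k-1}>0$, concavity (the tangent lies above the graph) yields the increment bound $\phi(S_k)-\phi(S_{k-1})\geq\phi'(S_k)(S_k-S_{k-1})=(1-\gamma)S_k^{-\gamma}(S_k-S_{k-1})$, and substituting the previous display produces the key per-step estimate
\[
    S_k^{1-\gamma}-S_{k-1}^{1-\gamma}\geq(1-\gamma)\,\theta^\alpha,\qquad k\geq 2.
\]
The base case $k=1$ must be treated directly: from $\eta_1\geq\theta\,\eta_1^{\alpha\beta}$ one gets $\eta_1\geq\theta^{1/(1-\alpha\beta)}$, hence $S_1^{1-\gamma}=\eta_1^{\alpha(1-\gamma)}\geq\theta^\alpha$. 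Summing the per-step estimate from $i=2$ to $k$ and adding this base case telescopes to
\[
    S_k^{1-\gamma}\geq\theta^\alpha\bigl(1+(1-\gamma)(k-1)\bigr)=\theta^\alpha\bigl(\alpha\beta+(1-\alpha\beta)k\bigr).
\]

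Finally I would invert this to bound $S_k$ from below and substitute back into $\eta_k\geq\theta S_k^\beta$; the powers of $\theta$ combine as $\theta\cdot\theta^{\alpha\beta/(1-\alpha\beta)}=\theta^{1/(1-\alpha\beta)}$, delivering precisely the claimed estimate $\eta_k\geq\theta^{1/(1-\alpha\beta)}\bigl(\alpha\beta+(1-\alpha\beta)k\bigr)^{\beta/(1-\alpha\beta)}$. The algebra is routine; the genuinely delicate points are keeping the direction of the concavity inequality correct (the increment is bounded \emph{below} by the right-endpoint derivative times the step) and, above all, handling the base case sharply, since a loose treatment loses the constant $\alpha\beta$ inside the bracket and yields only the weaker factor $(1-\alpha\beta)k$. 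I would also flag the boundary case $\alpha\beta=1$ separately: there $1-\gamma=0$, the power map degenerates to a constant and the exponents in the conclusion blow up, so the statement is vacuous or to be read as a limit and the concavity argument does not apply.
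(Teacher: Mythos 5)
Your proof is correct: you rightly read the misprinted hypothesis \eqref{eq:ineqPositSeq1} as $\eta_k\geq\theta\bigl(\sum_{i=1}^{k}\eta_i^{\alpha}\bigr)^{\beta}$ (which is how the lemma is actually invoked in the proof of Theorem~\ref{thm:aihoppaSSConvRate}), and your telescoping of $S_k^{1-\alpha\beta}$ via concavity, with the sharp base case $S_1^{1-\alpha\beta}\geq\theta^{\alpha}$, gives exactly the constant $\alpha\beta+(1-\alpha\beta)k$ and hence the claimed bound. The paper itself offers no proof, citing \cite{nesterov2020inexactSS} instead, and your argument is essentially the same partial-sum telescoping used in that reference, so there is nothing to add beyond your (appropriate) caveat about the degenerate case $\alpha\beta=1$.
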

%%%%%%%%%

Next, our main result of this section will provide the convergence rate of the exact high-order proximal-point segment search algorithm (e.g., Algorithm~\ref{alg:aihoppaSS}) is of order $\mathcal{O}\left(k^{-(3p+1)/2}\right)$, for the iteration counter $k$.

%%%%%%%%%%%%%%%%%%%%%%%%%%%%%%%%%%%%%%%%
\begin{thm}[convergence rate of Algorithm \ref{alg:aihoppaSS}]\label{thm:aihoppaSSConvRate}
    Let the sequence $\seq{x_k}$ be generated by Algorithm \ref{alg:aihoppaSS}. Then, for any $k\geq 1$, we have
    \begin{align}
        &\Psi_k(x)\leq A_k F(x)+ \tfrac{1}{2} \|x-x_0\|^2, \label{eq:akBkPsisIneq1}\\
        &A_k F(x_k)+B_k\leq \Psi_k^*, \label{eq:akBkPsisIneq2}
    \end{align}
    with $\Psi_k^*=\min_{x\in\dom\psi} \Psi_k(x)$,
    \begin{equation}\label{eq:Bk1}
        B_k=\tfrac{1}{2} \left(\tfrac{1}{H}\right)^{1/p} \sum_{i=0}^{k-1} A_{i+1}g_i^{(p+1)/p}, \quad g_i=\|\nabla f(x_{i+1})+g\|_*,
    \end{equation}
    and
    \begin{align*}
        g=-\nabla f(x_{k+1})-H\|x_{k+1}-(x_k+\tau_ku_k)\|^{p-1}B(x_{k+1}-(x_k+\tau_ku_k))\in \partial\psi(x_{k+1}).
    \end{align*}
    Moreover, for $R_0=\|x_0-x^*\|$, we get
    \begin{equation}\label{eq:akBkIneq}
        F(x_k)-F(x^*) \leq 2^p H R_0^{p+1} \left(1+\tfrac{2(k-1)}{p+1}\right)^{-(3p+1)/2}.
    \end{equation}
\end{thm}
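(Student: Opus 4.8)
The plan is to establish the two invariants \eqref{eq:akBkPsisIneq1} and \eqref{eq:akBkPsisIneq2} by induction on $k$, then to evaluate them at the minimizer $x^*$ to turn the gap between the estimating sequence and the objective into a bound of the form $F(x_k)-F^*\le R_0^2/(2A_k)$, and finally to convert the resulting constraints on $\{A_k\}$ and $\{B_k\}$ into the explicit lower bound on $A_k$ via Lemmas~\ref{lem:ineqPositNum} and \ref{lem:ineqPositSeq}. The upper bound \eqref{eq:akBkPsisIneq1} is the easy induction: for $k=1$ the definition \eqref{eq:estSeq} together with $\ell_{x_1}(x)\le f(x)$ (convexity of $f$) gives $\Psi_1(x)\le \tfrac12\|x-x_0\|^2+a_1F(x)$, and the inductive step only adds $a_{k+1}[\ell_{x_{k+1}}(x)+\psi(x)]\le a_{k+1}F(x)$ to $\Psi_k$, so the bound propagates with $A_{k+1}=A_k+a_{k+1}$.

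The core is the lower bound \eqref{eq:akBkPsisIneq2}, and here the segment search does the essential work. Since $\Psi_k$ carries the term $\tfrac12\|x-x_0\|^2$ it is $1$-strongly convex, so $\Psi_k(x)\ge \Psi_k^*+\tfrac12\|x-\upsilon_k\|^2$ with $\upsilon_k=\argmin_{x\in\dom\psi}\Psi_k(x)$. Writing $s:=\nabla f(x_{k+1})+g=-H r_+^{p-1}B(x_{k+1}-y_{k+1})$ for the subgradient of $F$ at $x_{k+1}$ produced by the operator, with $y_{k+1}=x_k+\tau_k u_k$ and $r_+=\|x_{k+1}-y_{k+1}\|$, I would use $\ell_{x_{k+1}}(x)+\psi(x)\ge F(x_{k+1})+\innprod{s}{x-x_{k+1}}$ and minimize $\tfrac12\|x-\upsilon_k\|^2+a_{k+1}\innprod{s}{x-x_{k+1}}$ over $x$; this produces $-\tfrac{a_{k+1}^2}{2}\|s\|_*^2=-\tfrac{a_{k+1}^2}{2}g_k^2$ together with $a_{k+1}\innprod{s}{\upsilon_k-x_{k+1}}$. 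The decisive point is the decomposition $\upsilon_k-x_{k+1}=(1-\tau_k)u_k+(y_{k+1}-x_{k+1})$: the segment search optimality \eqref{eq:foOptConds} with $\tau=1$ gives $(1-\tau_k)\innprod{s}{u_k}\ge 0$, while \eqref{eq:inprodnablaFY+} evaluates $\innprod{s}{y_{k+1}-x_{k+1}}=Hr_+^{p+1}=(1/H)^{1/p}g_k^{(p+1)/p}$, so $\innprod{s}{\upsilon_k-x_{k+1}}\ge (1/H)^{1/p}g_k^{(p+1)/p}$. Combined with the monotonicity $F(x_{k+1})\le F(x_k)$ (compare the minimizer of \eqref{eq:proxLS} against the feasible pair $(x_k,0)$) and the defining equation for $a_{k+1}$, this is what forces the increment $\Psi_{k+1}^*-\Psi_k^*$ to dominate $a_{k+1}F(x_{k+1})+\tfrac12(1/H)^{1/p}A_{k+1}g_k^{(p+1)/p}$, i.e.\ exactly the update of $B_k$ in \eqref{eq:Bk1}. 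I expect the delicate point here to be the precise balancing of these three contributions against the $a_{k+1}$ equation so that the constant in front of $A_{k+1}g_k^{(p+1)/p}$ emerges as $\tfrac12(1/H)^{1/p}$.

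With both invariants in hand, I would evaluate them at $x=x^*$. From \eqref{eq:akBkPsisIneq1}, $\Psi_k^*\le \Psi_k(x^*)\le A_kF^*+\tfrac12 R_0^2$, and combining with \eqref{eq:akBkPsisIneq2} yields $A_k(F(x_k)-F^*)+B_k\le \tfrac12 R_0^2$. Since $B_k\ge 0$ this already gives $F(x_k)-F^*\le R_0^2/(2A_k)$, and it simultaneously furnishes the budget $\tfrac12(1/H)^{1/p}\sum_{i=0}^{k-1}A_{i+1}g_i^{(p+1)/p}=B_k\le \tfrac12 R_0^2$.

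The remaining, and hardest, step is to convert this budget into a growth rate for $A_k$. Using the defining relation between $a_{i+1}$, $A_{i+1}$ and $g_i$ to eliminate $g_i$, the budget becomes a constraint of the type covered by Lemma~\ref{lem:ineqPositNum}, which turns a bounded weighted sum of powers into a lower bound for $\sum_{i}a_{i+1}=A_k$; feeding this into the resulting self-referential inequality and invoking Lemma~\ref{lem:ineqPositSeq} should then give $A_k\ge \tfrac{1}{2^{p+1}HR_0^{p-1}}\bigl(1+\tfrac{2(k-1)}{p+1}\bigr)^{(3p+1)/2}$. Substituting this into $F(x_k)-F^*\le R_0^2/(2A_k)$ produces \eqref{eq:akBkIneq}. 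The main obstacle is exactly this estimate: matching the exponent $(3p+1)/2$ through the two lemmas and resolving the self-referential bound on $A_k$ requires careful tracking of the powers of $g_i$, $a_{i+1}$ and $A_{i+1}$, and it is this bookkeeping, rather than any single conceptual ingredient, that I expect to be the most technical part of the argument.
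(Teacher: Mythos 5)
Your overall architecture matches the paper's: induction for the two invariants, evaluation at $x^*$ to obtain $F(x_k)-F^*\le R_0^2/(2A_k)$ together with the budget $\sum_{i}A_{i+1}g_i^{(p+1)/p}\le H^{1/p}R_0^2$, and then Lemmas~\ref{lem:ineqPositNum} and \ref{lem:ineqPositSeq} to force $A_k$ to grow like $k^{(3p+1)/2}$. However, your inductive step for \eqref{eq:akBkPsisIneq2} has a genuine gap, and it sits exactly at the point you yourself flagged as delicate. Writing $s=\nabla f(x_{k+1})+g$, your route --- plain monotonicity $F(x_{k+1})\le F(x_k)$ plus the decomposition $\upsilon_k-x_{k+1}=(1-\tau_k)u_k+(x_k+\tau_ku_k-x_{k+1})$ with the segment-search optimality \eqref{eq:foOptConds} applied at $\tau=1$ --- produces the key inner-product term with weight $a_{k+1}$ only, i.e.\ $\Psi_{k+1}^*\ge \Psi_k^*+a_{k+1}F(x_{k+1})+a_{k+1}\left(\tfrac1H\right)^{1/p}g_k^{(p+1)/p}-\tfrac{a_{k+1}^2}{2}g_k^2$. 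Since the step-size equation gives $a_{k+1}^2g_k^2=A_{k+1}\left(\tfrac1H\right)^{1/p}g_k^{(p+1)/p}$, the net coefficient of $\left(\tfrac1H\right)^{1/p}g_k^{(p+1)/p}$ in your bound is $a_{k+1}-\tfrac{A_{k+1}}{2}$, whereas the update $B_{k+1}=B_k+\tfrac12\left(\tfrac1H\right)^{1/p}A_{k+1}g_k^{(p+1)/p}$ in \eqref{eq:Bk1} requires it to be at least $\tfrac{A_{k+1}}{2}$. That forces $a_{k+1}\ge A_{k+1}$, i.e.\ $A_k\le 0$, which holds only at $k=0$. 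For every $k\ge 1$ your bound falls short; indeed, once $a_{k+1}<A_{k+1}/2$ (which happens for all large $k$, since $a_{k+1}/A_{k+1}\to 0$), it does not even show $B_{k+1}\ge B_k$.

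The missing idea --- and the way the paper closes this step --- is to spend the term $A_kF(x_k)$ through the full convexity inequality $F(x_k)\ge F(x_{k+1})+\innprod{s}{x_k-x_{k+1}}$ rather than through mere monotonicity. This transfers the weight $A_k$ onto the inner product, so the two contributions combine into $\innprod{s}{A_kx_k+a_{k+1}\upsilon_k-A_{k+1}x_{k+1}}=A_{k+1}\innprod{s}{(\widehat\tau_k-\tau_k)u_k+y_k-x_{k+1}}$, where $\widehat\tau_k=a_{k+1}/A_{k+1}\in(0,1]$ and $y_k=x_k+\tau_ku_k$, because $A_kx_k+a_{k+1}\upsilon_k=A_{k+1}(x_k+\widehat\tau_ku_k)$. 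One then applies the segment-search optimality \eqref{eq:foOptConds} at $\tau=\widehat\tau_k$ --- not at $\tau=1$ --- to discard $A_{k+1}\innprod{s}{(\widehat\tau_k-\tau_k)u_k}\ge 0$, and \eqref{eq:inprodnablaFY+} then delivers the inner-product term with the full weight $A_{k+1}$, namely $A_{k+1}\left(\tfrac1H\right)^{1/p}g_k^{(p+1)/p}$. Subtracting the quadratic penalty $\tfrac{a_{k+1}^2}{2}g_k^2=\tfrac12 A_{k+1}\left(\tfrac1H\right)^{1/p}g_k^{(p+1)/p}$ leaves exactly the required increment of $B_k$, and the induction closes. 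The remainder of your argument (evaluation at $x^*$, the budget for $B_k$, and the two lemmas, with the lower bound $A_k\ge 2^{-(p+1)}H^{-1}R_0^{-(p-1)}\left(1+\tfrac{2(k-1)}{p+1}\right)^{(3p+1)/2}$) goes through as you describe.
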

%%%%%%%%%

%%%%%%%%%%%%%
\begin{proof}
    We show \eqref{eq:akBkPsisIneq1} and \eqref{eq:akBkPsisIneq2} by induction. For $k=0$, $A_0=B_0=\Psi_0^*=0$, and then \eqref{eq:akBkPsisIneq1} and \eqref{eq:akBkPsisIneq2} hold. We now assume that this inequality holds for $k$ and show it for $k+1$. By the definition of $\Psi_{k+1}(\cdot)$ and the subgradient inequality, we get
    \begin{align*}
        \Psi_{k+1}(x)&=\Psi_k(x)+a_{k+1} \ell_k(x)
        \leq A_k F(x)+ \tfrac{1}{2} \|x-x_0\|^2+a_{k+1} \ell_k(x)\\
        &\leq A_{k+1} F(x)+ \tfrac{1}{2} \|x-x_0\|^2,
    \end{align*}
    which is the inequality \eqref{eq:akBkPsisIneq1}. Since $\Psi_k(\cdot)$ is $1$-strongly convex, it follows from \eqref{eq:akBkPsisIneq2} and the subgradient inequality that
    \begin{align*}
        \Psi_{k+1}(x) &= \Psi_k(x)+a_{k+1}\ell_k(x)
        \geq \Psi_k(\upsilon_k)+\innprod{\nabla \Psi_k(\upsilon_k)}{x-\upsilon_k}+\tfrac{1}{2} \|x-\upsilon_k\|^2+a_{k+1}\ell_k(x)\\
        &\geq A_k F(x_k)+B_k +\tfrac{1}{2}\|x-\upsilon_k\|^2 +a_{k+1}\ell_k(x)\\
        &\geq A_k F(x_k)+B_k + \min_{x\in\E} \left\{a_{k+1}[f(x_{k+1})+\innprod{\nabla f(x_{k+1}}{x-x_{k+1}}+\psi(x)] +\tfrac{1}{2}\|x-\upsilon_k\|^2\right\}\\
        &\geq A_k F(x_k)+B_k + \min_{x\in\E} \left\{a_{k+1}[F(x_{k+1})+\innprod{\nabla f(x_{k+1})+g}{x-x_{k+1}}] +\tfrac{1}{2}\|x-\upsilon_k\|^2\right\}\\
        &\geq A_k F(x_k)+B_k + a_{k+1}[F(x_{k+1})+\innprod{\nabla f(x_{k+1})+g}{\upsilon_k-x_{k+1}}]\\
        &~~~-\tfrac{a_{k+1}^2}{2}\|\nabla f(x_{k+1})+g\|_*^2.
    \end{align*}
    Setting $y_k = x_k+\tau_k(\upsilon_k-x_k)=x_k+\tau_k u_k$ and $\widehat{\tau}_k=\tfrac{a_{k+1}}{A_{k+1}}\in(0,1]$, it can be deduced from \eqref{eq:foOptConds} and \eqref{eq:inprodnablaFY+} that
    \begin{align*}
        A_k F(x_k)&+ a_{k+1}[F(x_{k+1})+\innprod{\nabla f(x_{k+1})+g}{\upsilon_k-x_{k+1}}]\\ 
        &\geq A_{k+1} F(x_{k+1})+ \innprod{\nabla f(x_{k+1})+g}{a_{k+1}\upsilon_k+A_k x_k-A_{k+1}x_{k+1}}\\
        &= A_{k+1} [F(x_{k+1})+ \innprod{\nabla f(x_{k+1})+g}{(\widehat{\tau}_k-\tau_k)u_k+y_k-x_{k+1}}]\\
        &\geq A_{k+1} [F(x_{k+1})+ \innprod{\nabla f(x_{k+1})+g}{y_k-x_{k+1}}]\\
        &= A_{k+1} \left[F(x_{k+1})+ \left(\tfrac{1}{H}\right)^{\tfrac{1}{p}} g_k^{\tfrac{p+1}{p}}\right].
    \end{align*}
    This, the definition of $B_k$, and the identity $\tfrac{a_{k+1}^2}{A_{k+1}+a_{k+1}}=\left(\tfrac{1}{H}\right)^{1/p}g_k^{(1-p)/p}$ eventually lead to
    \begin{align*}
        \Psi_{k+1}(x) \geq A_{k+1} \left[F(x_{k+1})+ \left(\tfrac{1}{H}\right)^{\tfrac{1}{p}} g_k^{\tfrac{p+1}{p}}\right] +B_k -\tfrac{a_{k+1}^2}{2}g_k^2=A_{k+1}F(x_{k+1})+B_{k+1},
    \end{align*}
    implying that \eqref{eq:akBkPsisIneq2} is for $k+1$.
     
    By \eqref{eq:akBkPsisIneq1} and \eqref{eq:akBkPsisIneq2}, we get    
    \begin{equation}\label{eq:AkpsikAkpsi*Ineq0}
        A_k F(x_k)+B_k \leq \Psi_k(x^*)\leq A_k F(x^*)+\tfrac{1}{2} \|x^*-x_0\|^2, \quad x^*\in\dom\psi,
    \end{equation}
    leading to
    \begin{equation}\label{eq:FkFsUB}
        F(x_k)-F(x^*)\leq \tfrac{R_0^2}{2A_k}.
    \end{equation}
    with $R_0=\|x_0-x^*\|$. Further, setting $x=x^*$ in \eqref{eq:AkpsikAkpsi*Ineq0} and using \eqref{eq:Bk1} and $F(x^*)\leq F(x_k)$, it can be concluded that
    \begin{equation}\label{eq:sumAi+1}
        \left(\tfrac{1}{H}\right)^{\tfrac{1}{p}}\sum_{i=0}^{k-1} A_{i+1} g_i^{(p+1)/p} \leq 2\left(A_k(F(x^*)-F(x_k))+\tfrac{1}{2} \|x^*-x_0\|^2\right)\leq R_0^2.
    \end{equation}
    
From \eqref{eq:Ak} and Line~6 of Algorithm~\ref{alg:aihoppaSS}, we obtain
\begin{align*}
A_{k+1}^{1/2}-A_k^{1/2} = \tfrac{a_{k+1}}{A_{k+1}^{1/2}+A_k^{1/2}}= \tfrac{a_{k+1}}{2A_{k+1}^{1/2}}= \tfrac{1}{2} \left(\tfrac{1}{H}\right)^{\tfrac{1}{2p}} g_k^{\tfrac{1-p}{2p}},
\end{align*}    
i.e., setting $\sigma_i=g_{i-1}^{\tfrac{p-1}{2p}}$, it holds that
\begin{align*}
\tfrac{1}{2} \left(\tfrac{1}{H}\right)^{\tfrac{1}{2p}} \sum_{i=1}^k \tfrac{1}{\sigma_i} \leq \sum_{i=1}^k A_i^{1/2}-A_{i-1}^{1/2}=A_k^{1/2},
\end{align*}   
leading to $A_k\geq \tfrac{1}{4} \left(\tfrac{1}{H}\right)^{\tfrac{1}{p}} \left(\sum_{i=1}^k \tfrac{1}{\sigma_i}\right)^2$. Moreover, it follows from \eqref{eq:sumAi+1} that $\sum_{i=1}^k A_i\sigma_i^{\tfrac{2(p+1)}{p-1}}\leq H^{\tfrac{1}{p}} R_0^2$. Now, applying Lemma~\ref{lem:ineqPositNum} with $\gamma=\tfrac{2(p+1)}{p-1}$ and $\omega_i=A_i$ to the latter inequality implies
\begin{align*}
	\sum_{i=1}^k \tfrac{1}{\xi_i}\geq H^{-\tfrac{p-1}{2p(p+1)}}R_0^{-\tfrac{p-1}{p+1}} \left(\sum_{i=1}^k A_i^{\tfrac{p-1}{3p+1}}\right)^{\tfrac{3p+1}{2(p+1)}},
\end{align*}
which leads to
\begin{align*}
	A_k \geq \tfrac{1}{4} H^{-\tfrac{2}{p+1}}R_0^{-\tfrac{2(p-1)}{p+1}} \left(\sum_{i=1}^k A_i^{\tfrac{p-1}{3p+1}}\right)^{\tfrac{3p+1}{p+1}}.
\end{align*}
Setting $\theta=\tfrac{1}{4} H^{-\tfrac{2}{p+1}}R_0^{-\tfrac{2(p-1)}{p+1}}$, $\alpha=\tfrac{p-1}{3p+1}$, and $\beta=\tfrac{3p+1}{p+1}$, Lemma~\ref{lem:ineqPositSeq} indicates that
\begin{align*}
	A_k \geq \left(\tfrac{1}{4}\right)^{\tfrac{p+1}{2}} H^{-1}R_0^{-(p-1)} k^{\tfrac{3p+1}{2}}, \quad k\geq 1.
\end{align*}
Together with \eqref{eq:FkFsUB}, this adjusts \eqref{eq:akBkIneq}.
\end{proof}
%%%%%%%%%%%

%%%%%%%%%%%%%%%%%%%%%%%%%%%%%%%
\begin{rem}[stopping criteria for Algorithm~\ref{alg:aihoppaSS}]
\label{rem:stopCrit1}
    Let us define the function $\func{\mathcal{L}_k}{\dom\psi}{\mathbb{R}}$ given by
\begin{equation}\label{eq:LkQR}
    \mathcal{L}_k(x)=\tfrac{1}{A_k}\left(\sum_{i=1}^k a_i\left[f(y_i)+\innprod{\nabla f(y_i)}{x-y_i}+\psi(x)\right]\right), \quad Q_R=\set{x\in\dom\psi ~|~ \|x-x_0\|\leq R}, 
\end{equation}
for some $R\geq R_0$. Setting $\mathcal{L}_k^*=\min_{x\in Q_R}\set{\mathcal{L}_k(x)}$, it can be concluded from \eqref{eq:akBkPsisIneq2} that
\begin{align*}
    A_k F(x_k) &\leq A_k F(x_k)+B_k\leq \Psi_k^*=\min_{x\in\dom\psi}\set{A_k \mathcal{L}_k(x)+\tfrac{1}{2}\|x-x_0\|^2}\\
    &\leq \min_{x\in Q_R}\set{A_k \mathcal{L}_k(x)+\tfrac{1}{2}\|x-x_0\|^2}\\
    &\leq \min_{x\in Q_R}\set{A_k \mathcal{L}_k(x)+\tfrac{1}{2} R^2}\\
    & = A_k \mathcal{L}_k^*+\tfrac{1}{2} R^2.
\end{align*}
This consequently leads to the inequality
\begin{align*}
    F(x_k)-\mathcal{L}_k^* \leq \tfrac{1}{2A_k} R^2.
\end{align*}
Since $F(x)\geq\mathcal{L}_k(x)$, we have $F(x^*)\geq\mathcal{L}_k^*$, i.e., $F(x_k)-F(x^*)\leq F(x_k)-\mathcal{L}_k^*\leq \tfrac{1}{2A_k} R^2$. Therefore, $F(x_k)-\mathcal{L}_k^*\leq\varepsilon$ or $A_k\geq R^2/(2\varepsilon)$ yield
\begin{align*}
 F(x_k)-F(x^*)\leq \varepsilon.
\end{align*}
Assuming the efficient computation of $\mathcal{L}_k^*$ or availability of the constant $R\geq R_0$, the inequality $F(x_k)-\mathcal{L}_k^*\leq\varepsilon$  (or $A_k\geq R^2/(2\varepsilon)$) can be used as a stopping criterion for Algorithm~\ref{alg:aihoppaSS}.
\end{rem}
\section{Inexact high-order proximal-point segment search method}\label{sec:ihoppSegmSearch}

As described in Remark~\ref{rem:exactSol} and Example~\ref{exa:exactSol}, finding the exact solution of $p$th-order composite proximal-point segment search can be complicated and costly for arbitrary choices of functions $f(\cdot)$ and $\psi(\cdot)$. As such, we explore approximate solutions of \eqref{eq:proxLS}, study the complexity of finding such approximations, and investigate the global rate of convergence of an inexact accelerated high-order proximal-point methods with the segment search. Our main idea is to first approximate a solution of \eqref{eq:prox00} and find an acceptable $\tau$ by applying a bisection method, which will lead to an inexact solution of \eqref{eq:proxLS}.

Let us begin by describing the set of \textit{acceptable solution} of the problem \eqref{eq:prox00} that is
\begin{equation}\label{eq:A}
     \A(\bar x,\beta)= \set{(x,g)\in\dom \psi\times\E^* ~:~ \|\nabla f_{\bar x, H}^p(x)+g\|_*\leq \beta \|\nabla f(x)+g\|_*},
\end{equation}
for some $g\in\partial \psi(x)\neq \emptyset$ and
\begin{equation}\label{eq:f}
    f_{\bar x, H}^p(x)=f(x)+H d_{p+1}(x-\bar x),
\end{equation}
where $\beta\in[0,1)$ is the tolerance parameter; cf. \cite{ahookhosh2020inexact}. In particular case of $\psi\equiv 0$, the set $ \A(\bar x,\beta)$ leads to inexact solutions for the problem \eqref{eq:prox00}, which was recently studied for smooth unconstrained convex problems in \cite{nesterov2020inexact}. Let us highlight that the set \eqref{eq:A} includes the exact solution of \eqref{eq:prox00} (so it is nonempty); however, the exact solution may be the only solution close enough to the optimizer; see \cite[Example~2.1]{ahookhosh2020inexact}. In  \cite[Section~2.1]{ahookhosh2020inexact}, it was shown that an acceptable solution of $p$th-order composite proximal-point operator \eqref{eq:prox00} can be obtained by applying one step of the $p$th-order tensor method. Moreover, in Section~\ref{sec:supFastSS}, we describe a non-Euclidean proximal method for for finding an inexact solution \eqref{eq:prox00}.

We next present some results of the definition \eqref{eq:A}, which is necessary in the upcoming sections.

%%%%%%%%%%%%%%%%%%%%%%%%%%%%%%
\begin{lem}[properties of acceptable solutions] \cite[Lemma~2.2]{ahookhosh2020inexact}\label{lem:solProp}
    Let $(T,g)\in  \A(\bar x,\beta)$ for some $g\in\partial\psi(T)$. Then
    \begin{equation}\label{eq:solProp1}
        (1-\beta)\|\nabla f(T)+g\|_* \leq H \|T-\bar x\|^p \leq (1+\beta)\|\nabla f(T)+g\|_*,
    \end{equation}
    \begin{equation}\label{eq:solProp2}
        \innprod{\nabla f(T)+g}{\bar x-T} \geq \tfrac{H}{1+\beta} \|T-\bar x\|^{p+1}.
    \end{equation}
    If additionally $\beta\leq\tfrac{1}{p}$, then
    \begin{equation}\label{eq:solProp3}
        \innprod{\nabla f(T)+g}{\bar x-T} \geq \left(\tfrac{1-\beta}{H}\right)^{1/p} \|\nabla f(T)+g\|_*^{\tfrac{p+1}{p}}.
    \end{equation}
\end{lem}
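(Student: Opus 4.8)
The plan is to prove the three inequalities in the stated order, as each builds naturally on the previous one, starting from the defining relation of the acceptable solution set $\A(\bar x,\beta)$ and the formula for the gradient of the regularizing term.

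First I would establish \eqref{eq:solProp1}. By \eqref{eq:dp1Der}, the gradient of the regularized function is $\nabla f_{\bar x, H}^p(x) = \nabla f(x) + H\|x-\bar x\|^{p-1}B(x-\bar x)$, so that $\nabla f_{\bar x, H}^p(T)+g = (\nabla f(T)+g) + H\|T-\bar x\|^{p-1}B(T-\bar x)$ for $g\in\partial\psi(T)$. Taking dual norms and using $\|H\|T-\bar x\|^{p-1}B(T-\bar x)\|_* = H\|T-\bar x\|^p$, the acceptability condition $\|\nabla f_{\bar x, H}^p(T)+g\|_*\leq\beta\|\nabla f(T)+g\|_*$ together with the triangle inequality (in both directions) yields the two-sided bound. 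Concretely, the reverse triangle inequality gives $H\|T-\bar x\|^p \leq \|\nabla f(T)+g\|_* + \|\nabla f_{\bar x, H}^p(T)+g\|_* \leq (1+\beta)\|\nabla f(T)+g\|_*$, and the forward direction gives $(1-\beta)\|\nabla f(T)+g\|_* \leq H\|T-\bar x\|^p$.

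Next I would prove \eqref{eq:solProp2}. The idea is to write $\innprod{\nabla f(T)+g}{\bar x-T}$ by substituting $\nabla f(T)+g = (\nabla f_{\bar x, H}^p(T)+g) - H\|T-\bar x\|^{p-1}B(T-\bar x)$. The second term contributes $H\|T-\bar x\|^{p-1}\innprod{B(T-\bar x)}{T-\bar x} = H\|T-\bar x\|^{p+1}$ to $\innprod{\nabla f(T)+g}{\bar x-T}$ with a favorable sign, while the first (error) term is bounded by Cauchy--Schwarz using $\|\nabla f_{\bar x, H}^p(T)+g\|_*\leq\beta\|\nabla f(T)+g\|_*$ and then the upper bound from \eqref{eq:solProp1}, namely $\|\nabla f(T)+g\|_*\leq \tfrac{H}{1-\beta}\|T-\bar x\|^p$. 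Collecting the terms and simplifying the coefficient should produce the factor $\tfrac{H}{1+\beta}$; I expect the bookkeeping of the $\beta$-dependent constants to be the main place where care is needed, since one must massage $1 - \tfrac{\beta}{1-\beta}\cdot(\text{something})$ into $\tfrac{1}{1+\beta}$, and the cleanest route is probably to bound the error term instead by $\beta\|\nabla f(T)+g\|_*\cdot\|T-\bar x\|$ and then invoke the lower bound $\|\nabla f(T)+g\|_*\geq \tfrac{H}{1+\beta}\|T-\bar x\|^p$.

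Finally, for \eqref{eq:solProp3} under the extra hypothesis $\beta\leq\tfrac1p$, the goal is to convert the geometric lower bound \eqref{eq:solProp2} into a bound expressed purely in terms of $\|\nabla f(T)+g\|_*$. Using \eqref{eq:solProp1} to write $\|T-\bar x\| \geq \bigl(\tfrac{1-\beta}{H}\|\nabla f(T)+g\|_*\bigr)^{1/p}$ and substituting into $\innprod{\nabla f(T)+g}{\bar x-T}\geq \tfrac{H}{1+\beta}\|T-\bar x\|^{p+1}$ gives a lower bound of the form $\tfrac{H}{1+\beta}\bigl(\tfrac{1-\beta}{H}\bigr)^{(p+1)/p}\|\nabla f(T)+g\|_*^{(p+1)/p}$. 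The remaining task is purely to verify that the constant $\tfrac{H}{1+\beta}\bigl(\tfrac{1-\beta}{H}\bigr)^{(p+1)/p} = \bigl(\tfrac{1-\beta}{H}\bigr)^{1/p}\cdot\tfrac{1-\beta}{1+\beta}\geq \bigl(\tfrac{1-\beta}{H}\bigr)^{1/p}$ — which requires $\tfrac{1-\beta}{1+\beta}\geq 1$, clearly false, so the constant simplification must instead come from a sharper estimate. The correct route, and the subtle point I would watch for, is that \eqref{eq:solProp3} should follow by combining \eqref{eq:solProp2} with the \emph{lower} bound on $\|T-\bar x\|^{p+1}$ together with the condition $\beta\leq\tfrac1p$ precisely to control the loss factor; I anticipate this final constant-chasing step, reconciling the $1/p$ threshold with the exponent $(p+1)/p$, to be the genuine obstacle, whereas \eqref{eq:solProp1} and \eqref{eq:solProp2} are comparatively mechanical.
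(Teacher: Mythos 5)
Your argument for \eqref{eq:solProp1} is correct and is the standard one (triangle inequality applied to $\nabla f_{\bar x,H}^p(T)+g=(\nabla f(T)+g)+H\|T-\bar x\|^{p-1}B(T-\bar x)$). The genuine gap is in \eqref{eq:solProp2}: both routes you sketch fail. Write $G=\nabla f(T)+g$, $E=\nabla f_{\bar x,H}^p(T)+g$, $r=\|T-\bar x\|$. Your Cauchy--Schwarz route gives $\innprod{G}{\bar x-T}\geq Hr^{p+1}-\beta\|G\|_*r\geq\bigl(1-\tfrac{\beta}{1-\beta}\bigr)Hr^{p+1}=\tfrac{1-2\beta}{1-\beta}Hr^{p+1}$, and since $\tfrac{1-2\beta}{1-\beta}-\tfrac{1}{1+\beta}=-\tfrac{2\beta^2}{1-\beta^2}<0$ for $\beta\in(0,1)$, the constant you reach is strictly weaker than the claimed one; no bookkeeping can close that. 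Your fallback --- controlling the error term $\beta\|G\|_*r$ via the \emph{lower} bound $\|G\|_*\geq\tfrac{H}{1+\beta}r^p$ --- is logically backwards: that term enters with a minus sign, so absorbing it requires an \emph{upper} bound $\|G\|_*\leq\tfrac{H}{1+\beta}r^p$, which is the reverse of what \eqref{eq:solProp1} provides. The missing idea (the present paper only imports the lemma from part I; the proof there proceeds this way) is to \emph{square} the membership inequality $\|E\|_*\leq\beta\|G\|_*$ and expand, using $\|B(T-\bar x)\|_*=r$:
\begin{equation*}
  \|G\|_*^2+2Hr^{p-1}\innprod{G}{T-\bar x}+H^2r^{2p}\leq\beta^2\|G\|_*^2
  \quad\Longrightarrow\quad
  2Hr^{p-1}\innprod{G}{\bar x-T}\geq(1-\beta^2)\|G\|_*^2+H^2r^{2p}.
\end{equation*}
Now the cross term is kept exactly rather than estimated, and $(1-\beta^2)\|G\|_*^2$ appears with a \emph{plus} sign, so the lower bound $\|G\|_*\geq\tfrac{Hr^p}{1+\beta}$ is precisely what is usable: it yields $(1-\beta^2)\|G\|_*^2+H^2r^{2p}\geq\bigl(\tfrac{1-\beta}{1+\beta}+1\bigr)H^2r^{2p}=\tfrac{2}{1+\beta}H^2r^{2p}$, i.e.\ \eqref{eq:solProp2}.

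For \eqref{eq:solProp3} you correctly observe that combining \eqref{eq:solProp1} with \eqref{eq:solProp2} loses a factor $\tfrac{1-\beta}{1+\beta}<1$, but you then leave the step open, and that unresolved step is the heart of the claim. It follows from the same squared inequality: dividing $2Hr^{p-1}\innprod{G}{\bar x-T}\geq(1-\beta^2)\|G\|_*^2+H^2r^{2p}$ by $2Hr^{p-1}\|G\|_*^2$ and setting $t=Hr^p/\|G\|_*$, the claim \eqref{eq:solProp3} reduces to $(1-\beta^2)+t^2\geq 2(1-\beta)^{1/p}\,t^{(p-1)/p}$. The function $\chi(t)=(1-\beta^2)+t^2-2(1-\beta)^{1/p}t^{(p-1)/p}$ decreases then increases, with minimizer $t_*=\bigl(\tfrac{p-1}{p}\bigr)^{p/(p+1)}(1-\beta)^{1/(p+1)}$; the hypothesis $\beta\leq\tfrac1p$ is used exactly here, since it gives $\tfrac{p-1}{p}\leq 1-\beta$ and hence $t_*\leq 1-\beta$, while \eqref{eq:solProp1} guarantees $t\geq 1-\beta$. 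Therefore $\chi(t)\geq\chi(1-\beta)=(1-\beta)\bigl[(1+\beta)+(1-\beta)-2\bigr]=0$, which is \eqref{eq:solProp3}. So the lemma is not ``two mechanical parts plus constant-chasing'': both \eqref{eq:solProp2} and \eqref{eq:solProp3} hinge on the squaring device absent from your proposal, and \eqref{eq:solProp3} additionally needs this one-dimensional monotonicity argument. As written, your proposal establishes only \eqref{eq:solProp1} and strictly weaker forms of the other two inequalities.
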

%%%%%%%%%

We now continue with the following lemma providing some useful properties of the inexact proximal solutions, which is needed in Section~\ref{sec:compAS}.

%%%%%%%%%%%%%%%%%%%%%%%%%%%%%%%%
\begin{lem}[further properties of acceptable solutions]\label{lem:AnormIneq}
    Let $(T,g)\in  \A(\bar x,\beta)$ for some $g\in\partial\psi(T)$ and $\beta\in [0,1/2)$. Then, we have
    \begin{equation}\label{eq:normTx*}
        \|T-x^*\|^2\leq \tfrac{(1-\beta)^2}{1-2\beta} \|\bar x-x^*\|^2.
    \end{equation}
    In particular, if $\beta\leq \tfrac{3}{8}$, then $\|T-x^*\|\leq \tfrac{5}{4} \|\bar x-x^*\|$.
\end{lem}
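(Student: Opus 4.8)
The plan is to combine the inexactness condition defining $\A(\bar x,\beta)$ with the optimality of $x^*$ so as to produce a single quadratic inequality relating $\|T-x^*\|$, $\|\bar x-x^*\|$, and $r:=\|T-\bar x\|$, and then to eliminate $r$. Write $v=\nabla f(T)+g$, so that $v\in\partial F(T)$; since $x^*$ minimizes $F=f+\psi$, convexity yields $\innprod{v}{T-x^*}\geq F(T)-F(x^*)\geq 0$. If $r=0$ then $T=\bar x$ and the bound is immediate because $(1-\beta)^2/(1-2\beta)\geq 1$ on $[0,\tfrac12)$, so I assume $r>0$.

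First I would expand $\|T-x^*\|^2$ in the inner product $\innprod{B\cdot}{\cdot}$ to obtain the exact identity $\|T-x^*\|^2=\|\bar x-x^*\|^2-r^2+2\innprod{B(T-\bar x)}{T-x^*}$. To control the cross term, set $w=Hr^{p-1}B(T-\bar x)$ and note that $v+w=\nabla f_{\bar x,H}^p(T)+g$ is precisely the residual bounded in \eqref{eq:A}, so $\|v+w\|_*\leq\beta\|v\|_*$. Splitting $\innprod{w}{T-x^*}=\innprod{v+w}{T-x^*}-\innprod{v}{T-x^*}$, discarding the nonnegative term $\innprod{v}{T-x^*}$ and applying Cauchy--Schwarz gives $\innprod{w}{T-x^*}\leq\beta\|v\|_*\|T-x^*\|$; dividing by $Hr^{p-1}$ and invoking the lower bound $(1-\beta)\|v\|_*\leq Hr^p$ from \eqref{eq:solProp1} yields $\innprod{B(T-\bar x)}{T-x^*}\leq\tfrac{\beta}{1-\beta}\,r\,\|T-x^*\|$.

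Combining the identity with this estimate produces $\|T-x^*\|^2+r^2-\tfrac{2\beta}{1-\beta}r\|T-x^*\|\leq\|\bar x-x^*\|^2$. The key move is then to minimize the left-hand side over $r>0$ (equivalently, complete the square, with the minimum at $r=\tfrac{\beta}{1-\beta}\|T-x^*\|$), which leaves $\big(1-\tfrac{\beta^2}{(1-\beta)^2}\big)\|T-x^*\|^2\leq\|\bar x-x^*\|^2$; simplifying $1-\beta^2/(1-\beta)^2=(1-2\beta)/(1-\beta)^2$ gives exactly \eqref{eq:normTx*}. For the last assertion I would check that $\phi(\beta)=(1-\beta)^2/(1-2\beta)$ is nondecreasing on $[0,\tfrac12)$ (its derivative is $2\beta(1-\beta)/(1-2\beta)^2\geq0$) and satisfies $\phi(\tfrac38)=\tfrac{25}{16}$, so $\beta\leq\tfrac38$ forces $\|T-x^*\|\leq\tfrac54\|\bar x-x^*\|$.

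The main obstacle is the sign bookkeeping in the cross-term estimate: one must orient the splitting of $\innprod{w}{T-x^*}$ so that the optimality term $\innprod{v}{T-x^*}\geq0$ can be dropped rather than accumulated, and one must then eliminate $r$ by minimizing the quadratic in $r$ (not in $\|T-x^*\|$). Getting either orientation wrong would degrade or destroy the sharp constant $(1-\beta)^2/(1-2\beta)$.
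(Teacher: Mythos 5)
Your proof is correct and follows essentially the same route as the paper's: the same expansion of $\|T-x^*\|^2$, the same use of $F(T)\geq F(x^*)$ together with \eqref{eq:solProp1}, and the same elimination of $r=\|T-\bar x\|$ via the quadratic extremum, giving the identical constant $(1-\beta)^2/(1-2\beta)$ and the same evaluation at $\beta=3/8$. If anything, your treatment of the inexactness is the more careful one: the paper writes the exact stationarity relation $g=-\nabla f(T)-Hr^{p-1}B(T-\bar x)$ (which holds only for the exact proximal point) and then reinserts $\beta$ through the chain $\innprod{\nabla f(T)+g}{x^*-T}\leq 0\leq \beta\|\nabla f(T)+g\|_*\|x^*-T\|$, whereas you keep the residual $\nabla f_{\bar x,H}^p(T)+g$ explicit, bound it by $\beta\|\nabla f(T)+g\|_*$ directly from the definition of $\A(\bar x,\beta)$, and discard the nonnegative term $\innprod{\nabla f(T)+g}{T-x^*}$ --- which is precisely the argument needed when $T$ is genuinely inexact.
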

%%%%%%%%%

%%%%%%%%%%%%%
\begin{proof}
    Let us denote $r= \|T-\bar x\|$. Writing the first-order optimality conditions for \eqref{eq:prox00}, there exists a subgradient $g\in\partial \psi(T)$ such that $g=-\nabla f(T)-H\|T-\bar x\|^{p-1}B(T-\bar x)$, i.e.,
    \begin{align*}
        B(T-\bar x) = \tfrac{1}{H\|T-\bar x\|^{p-1}} \left(-\nabla f(T)-g\right).
    \end{align*}
    On the other hand, the subgradient inequality and $F(x^*)\leq F(T)$ ensure
    \begin{align*}
        \innprod{\nabla f(T)+g}{x^*-T} \leq F(x^*)-F(T)\leq 0 \leq \beta \|\nabla f(T)+g\| \|x^*-T\|.
    \end{align*}
    Then, it follows from the last two inequalities and \eqref{eq:solProp1} that
    \begin{align*}
         \|T-x^*\|^2 &= \|\bar x-x^*\|^2 + 2\innprod{B(T-\bar x)}{(\bar x-T)+(T-x^*)}+r^2\\
         &= \|\bar x-x^*\|^2 + 2\innprod{B(T-\bar x)}{T-x^*}-r^2\\
         &= \|\bar x-x^*\|^2 + \tfrac{2}{Hr^{p-1}}\innprod{-\nabla f(T)-g}{T-x^*}-r^2\\
         &\leq \|\bar x-x^*\|^2 + \tfrac{2\beta}{Hr^{p-1}}\|\nabla f(T)+g\|_*\|T-x^*\|-r^2\\
         &\leq \|\bar x-x^*\|^2 + \max_r\set{\tfrac{2\beta}{1-\beta}\|T-x^*\| r-r^2}\\
         &= \|\bar x-x^*\|^2 + \tfrac{\beta^2}{(1-\beta)^2}\|T-x^*\|^2,
    \end{align*}
    leading to \eqref{eq:normTx*}. Note that the function $\func{\zeta}{\R}{\R}$ given by $\zeta(\beta)=\tfrac{(1-\beta)^2}{1-2\beta}$ is increasing on the interval $[0,1/2)$, i.e., it is bounded above by $25/16$ for $\beta\in\left[0, \tfrac{3}{8}\right]$. We consequently come to the inequality $\|T-x^*\|^2\leq \tfrac{25}{16} \|\bar x-x^*\|^2$, adjusting the result.
\end{proof}
%%%%%%%%%%%

We next consider finding an approximate solution for the minimization problem \eqref{eq:proxLS}. To this end, writing the first-order optimality conditions for \eqref{eq:proxLS}, there exists $g\in\partial \psi(x_{k+1})$ such that
\begin{align}\label{eq:optCondsk1}
    (\tau-\tau_{k+1})\innprod{\nabla f(x_{k+1})+g}{u_k}\geq 0, \quad \forall \tau\in [0,1],
\end{align}
where $u_k=\upsilon_k-x_k$ and the points $x_k,\upsilon_k\in\dom\psi$ are given. Let us assume that $(x_k^0,g)\in\A(x_k,\beta)$ such that $\innprod{\nabla f(x_k^0)+g}{u_k}\geq 0$. Then, it can be concluded from \eqref{eq:optCondsk1} that $\tau_{k+1}=0$, i.e., $(x_{k+1},\tau_{k+1})=(x_k^0,0)\in\sprox(x_k, u_k)$. Moreover, if $(x_k^1,\overline{g})\in\A(\upsilon_k,\beta)$ such that $\innprod{\nabla f(x_k^1)+\overline{g}}{u_k}\leq 0$. Then, \eqref{eq:optCondsk1} yields $\tau_{k+1}=1$, i.e., $(x_{k+1},\tau_{k+1})=(x_k^1,1)\in\sprox(\upsilon_k, u_k)$. Otherwise, we come to the inequalities $\innprod{\nabla f(x_k^0)+g}{u_k}< 0<\innprod{\nabla f(x_k^1)+\overline{g} }{u_k}$. Therefore, there exists $\widehat\tau\in (0,1)$ and $(\widehat x_k,\widehat g)\in\A(x_k+\widehat\tau u_k,\beta)$ such that $\innprod{\nabla f(\widehat x_k)+,\widehat g }{u_k}=0$, i.e., the inequality \eqref{eq:optCondsk1} is consequently satisfied. Since we aim to verify an approximate solution of the the high-order proximal-point minimization with segment search, we can use a root finding technique to find an approximate solution $\tau$ such that 
\begin{equation}
    \innprod{\nabla f(\widetilde x_k)+,\widetilde g }{u_k}\approx 0, \quad (\widetilde x_k,\widetilde g)\in\A(x_k+\widetilde\tau u_k,\beta).
\end{equation}
To do so, we will suggest the bisection scheme in Section~\ref{sec:compAS}.

\vspace{3mm}
%%%%%%%%%%%%%%%%%%%%%%
\RestyleAlgo{boxruled}
\begin{algorithm}[ht!]
\DontPrintSemicolon \KwIn{$x_{0}\in\dom\psi$,~$\upsilon_0=x_0$,~ $\beta\in [0,3/(3p+2)]$,~ $H>0$,~$A_0=0$,~ $\Psi_0=\tfrac{1}{2}\|x-x_0\|^2$,~ $k=0$;} 
\Begin{ 
    \While {stopping criterion does not hold}{ 
        Set $u_k=\upsilon_k-x_k$, $y_k=x_k$, and compute $(x_k^0,g)\in\A(y_k,\beta)$ for $g\in\partial \psi(x_k^0)$;\;
        \uIf{$\innprod{\nabla f(x_k^0)+g}{u_k}\geq 0$}{
            $\eL_k(x)=\ell_{x_k^0}(x)+\psi(x)$,
            $x_{k+1}=x_k^0,~ g_k=\| \nabla f(x_k^0)+g\|_*$;\;
        }
        \Else{
            $y_k=\upsilon_k$ and compute $(x_k^1,\overline{g})\in\A(y_k,\beta)$ for $\overline{g}\in\partial \psi(x_k^1)$;\;
            \uIf{$\innprod{\nabla f(x_k^1)+\overline{g} }{u_k}\leq 0$}{
                $\eL_k(x)=\ell_{x_k^1}(x)+\psi(x)$,
                $x_{k+1}=x_k^1,~ g_k=\| \nabla f(x_k^1)+\overline{g} \|_*$ for $\overline{g} \in\partial \psi(x_k^1)$;\;
            }
            \Else{
                Find $0\leq\tau_k^1\leq\tau_k^2\leq 1$, $y_k^1=x_k+\tau_k^1u_k$; $(T_k^1,\widehat{g})\in\A(y_k^1,\beta)$ for $\widehat{g}\in\partial \psi(T_k^1)$, $y_k^2=x_k+\tau_k^2u_k$, and $(T_k^2,\widetilde{g})\in\A(y_k^2,\beta)$ for $\widetilde{g}\in\partial \psi(T_k^2)$ such that
                \begin{align}\label{eq:else3}
                    \beta_k^1\leq 0 \leq \beta_k^2,~ \alpha_k(\tau_k^1-\tau_k^2)\beta_k^1\leq \tfrac{1}{2} \left(\tfrac{1-\beta}{H}\right)^{1/p} g_k^{(p+1)/p},~ \alpha_k = \tfrac{\beta_k^2}{\beta_k^2-\beta_k^1}\in [0,1],
                \end{align}
                with $\beta_k^1=\innprod{\nabla f(T_k^1)+\widehat{g}}{u_k}$, $\beta_k^2=\innprod{\nabla f(T_k^2)+\widetilde{g}}{u_k}$, 
                \begin{align*}
                    g_k=\left(\alpha_k\| \nabla f(T_k^1)+\widehat{g}\|_*^{(p+1)/p}+(1-\alpha_k)\| \nabla f(T_k^2)+\widetilde{g}\|_*^{(p+1)/p}\right)^{p/(p+1)};
                \end{align*}
                Set $\eL_k(x)=\alpha_k \ell_{T_k^1}(x) +(1-\alpha_k) \ell_{T_k^2}(x)+\psi(x)$ and $x_{k+1}=\alpha_k T_k^1+(1-\alpha_k)T_k^2$;\; 
            }
        }
        Compute $a_{k+1}$ by solving  $\tfrac{a_{k+1}^2}{A_{k+1}+a_{k+1}}=\tfrac{1}{4}\left(\tfrac{1-\beta}{H}\right)^{1/p}g_k^{(1-p)/p}$, and $A_{k+1}=A_k+a_{k+1}$;\;
        Set $\Psi_{k+1}(x)=\Psi_k(x)+a_{k+1}\eL_k(x)$ and compute $\upsilon_{k+1}=\argmin_{x\in\dom\psi} \Psi_{k+1}(x)$;\;
     } 
}
\caption{Inexact High-Order Proximal-Point Segment Search Algorithm\label{alg:iaihoppaSS}}
\end{algorithm}
%%%%%%%%%%%%%%%
\vspace{3mm}

The next result verifies the necessary properties of the linear estimation function $\mathcal{L}_k(\cdot)$.

%%%%%%%%%%%%%%%%%%%%%%%%%%%%%%%%%%%%%%%
\begin{lem}[properties of linear estimation function]\label{lem:fkvarphikfk1Ineq}
    Let $\seq{x_k}$ be the generated sequence by Algorithm~\ref{alg:iaihoppaSS}. Then, for all $k\geq 0$, we have
    \begin{equation}\label{eq:psikPsikpsik1Ineq}
        F(x_k)\geq \eL_k(x_k)\geq F(x_{k+1})+\tfrac{1}{2} \left(\tfrac{1-\beta}{H}\right)^{1/p} g_k^{(p+1)/p}.
    \end{equation}
    Moreover, for any $k\geq 0$, we have
    \begin{equation}\label{eq:sumGip1p}
        \sum_{i=k}^\infty g_i^{(p+1)/p} \leq 2\left(\tfrac{H}{1-\beta}\right)^{1/p} (F(x_k)- F^*).
    \end{equation}
\end{lem}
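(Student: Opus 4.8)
The plan is to establish the sandwich \eqref{eq:psikPsikpsik1Ineq} by treating the three branches of Algorithm~\ref{alg:iaihoppaSS} separately, and then to obtain \eqref{eq:sumGip1p} by a telescoping argument. Throughout I would use that the tolerance satisfies $\beta\leq 3/(3p+2)\leq 1/p$, so that Lemma~\ref{lem:solProp}, and in particular the key estimate \eqref{eq:solProp3}, applies to every acceptable pair produced by the algorithm. The upper bound $F(x_k)\geq\eL_k(x_k)$ is uniform across the branches: in each case $\eL_k$ is a convex combination of first-order minorants $\ell_z(\cdot)\leq f(\cdot)$ of the convex function $f$, augmented by $\psi$, so $\eL_k(x)\leq f(x)+\psi(x)=F(x)$ for all $x\in\dom\psi$, in particular at $x=x_k$.

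For the lower bound I would argue case by case. In the two endpoint branches ($\tau_{k+1}\in\{0,1\}$) the point $x_{k+1}$ is a single acceptable iterate $T$ with $\eL_k(x)=\ell_{T}(x)+\psi(x)$, and the subgradient inequality for $\psi$ at $x_{k+1}$ gives $\eL_k(x_k)\geq F(x_{k+1})+\innprod{\nabla f(x_{k+1})+g}{x_k-x_{k+1}}$. In Case~1, where $\bar x=x_k$, the increment $x_k-x_{k+1}$ equals $\bar x-T$, so \eqref{eq:solProp3} yields the bound with constant $1\geq\tfrac12$. In Case~2, where $\bar x=\upsilon_k$, I would split $x_k-x_{k+1}=(\upsilon_k-x_{k+1})-u_k$ and discard the $-u_k$ contribution using the branch test $\innprod{\nabla f(x_{k+1})+\overline g}{u_k}\leq 0$, again reducing to \eqref{eq:solProp3}.

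Case~3 (interior $\tau_{k+1}$) is the main obstacle. Writing $\widehat G=\nabla f(T_k^1)+\widehat g$ and $\widetilde G=\nabla f(T_k^2)+\widetilde g$, I would apply the subgradient inequality for $\psi$ at $T_k^1$ and $T_k^2$ and take the $\alpha_k$-weighted combination to obtain
\begin{equation*}
\eL_k(x_k)\geq \alpha_k F(T_k^1)+(1-\alpha_k)F(T_k^2)+\alpha_k\innprod{\widehat G}{x_k-T_k^1}+(1-\alpha_k)\innprod{\widetilde G}{x_k-T_k^2}.
\end{equation*}
Substituting $x_k=y_k^j-\tau_k^j u_k$ for $j=1,2$ and invoking \eqref{eq:solProp3} at the anchors $y_k^1,y_k^2$ extracts the term $(\tfrac{1-\beta}{H})^{1/p}g_k^{(p+1)/p}$ through the very definition of $g_k$, leaving the residual $-\big[\alpha_k\tau_k^1\beta_k^1+(1-\alpha_k)\tau_k^2\beta_k^2\big]$. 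The crux is the orthogonality identity $\alpha_k\beta_k^1+(1-\alpha_k)\beta_k^2=0$, which is forced by the choice $\alpha_k=\beta_k^2/(\beta_k^2-\beta_k^1)$ and collapses this residual to $-\alpha_k(\tau_k^1-\tau_k^2)\beta_k^1$; the admissibility constraint \eqref{eq:else3} then bounds it below by $-\tfrac12(\tfrac{1-\beta}{H})^{1/p}g_k^{(p+1)/p}$. Combining these estimates and using convexity of $F$ in the form $F(x_{k+1})\leq\alpha_k F(T_k^1)+(1-\alpha_k)F(T_k^2)$ produces the claimed lower bound.

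Finally, \eqref{eq:sumGip1p} follows by telescoping: from \eqref{eq:psikPsikpsik1Ineq} together with $\eL_i(x_i)\leq F(x_i)$ one gets $\tfrac12(\tfrac{1-\beta}{H})^{1/p}g_i^{(p+1)/p}\leq F(x_i)-F(x_{i+1})$; summing over $i=k,\dots,N$, cancelling the telescope, bounding $F(x_{N+1})\geq F^*$, and letting $N\to\infty$ yields the stated inequality. The only delicate point is Case~3, where recognizing the orthogonality identity is precisely what lets the two copies of \eqref{eq:solProp3} and the segment-search constraint \eqref{eq:else3} combine cleanly into the factor $\tfrac12$.
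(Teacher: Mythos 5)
Your proposal is correct and follows essentially the same route as the paper's proof: the same three-branch case analysis, the same use of \eqref{eq:solProp3} at the anchor points, the orthogonality identity $\alpha_k\beta_k^1+(1-\alpha_k)\beta_k^2=0$ together with the constraint \eqref{eq:else3} to control the residual in the interior case, convexity of $F$, and the telescoping sum for \eqref{eq:sumGip1p}. The only difference is cosmetic: in Case~3 you substitute $x_k=y_k^j-\tau_k^j u_k$ directly at each anchor, whereas the paper routes the computation through the midpoint $\widehat{y}_k=\tfrac{1}{2}(y_k^1+y_k^2)$, but both reductions collapse to the same residual $-\alpha_k(\tau_k^1-\tau_k^2)\beta_k^1$ via the same identity.
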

%%%%%%%%%

%%%%%%%%%%%%%
\begin{proof}
    By the definition of $\eL_k(\cdot)$ in Algorithm~\ref{alg:iaihoppaSS} and the subgradient inequality, the left-hand side inequality is valid. To show the second inequality, we consider three cases: (i) $\innprod{\nabla f(x_k^0)+g}{u_k}\geq 0$; (ii) $\innprod{\nabla f(x_k^1)+\overline{g} }{u_k}\leq 0$; (iii) $\innprod{\nabla f(x_k^0)+g}{u_k}< 0$ and $\innprod{\nabla f(x_k^1)+\overline{g} }{u_k}> 0$.
    
    In Case (i), $\innprod{\nabla f(x_k^0)+g}{u_k}\geq 0$, the fact that $x_{k+1}=x_k^0$, $(x_{k+1},g)\in\A(x_k,\beta)$ for $g\in\partial\psi(x_{k+1})$, and \eqref{eq:solProp3} yield
    \begin{align*}
        \eL_k(x_k)&= f(x_{k+1})+\innprod{\nabla f(x_{k+1})}{x_k-x_{k+1}}+\psi(x_k)\\
        &\geq F(x_{k+1})+\innprod{\nabla f(x_{k+1})+g}{x_k-x_{k+1}}\\
        &\geq F(x_{k+1})+\left(\tfrac{1-\beta}{H}\right)^{1/p} \|\nabla f(x_{k+1})+g\|_*^{(p+1)/p}.
    \end{align*}
    In Case (ii), $\innprod{\nabla f(x_k^1)+\overline{g} }{u_k}\leq 0$, we have $x_{k+1}=x_k^1$ and $(x_{k+1},\overline{g})\in\A(\upsilon_k,\beta)$ for $g\in\partial\psi(x_{k+1})$. Then, it follows from \eqref{eq:solProp3} that
    \begin{align*}
        \eL_k(x_k)&= f(x_{k+1})+\innprod{\nabla f(x_{k+1})}{x_k-x_{k+1}}+\psi(x_k)\\
        &\geq F(x_{k+1})+\innprod{\nabla f(x_{k+1})+\overline{g}}{\upsilon_k-u_k-x_{k+1}}\\
        &\geq F(x_{k+1})+\innprod{\nabla f(x_{k+1})+\overline{g}}{\upsilon_k-x_{k+1}}\\
        &\geq F(x_{k+1})+\left(\tfrac{1-\beta}{H}\right)^{1/p} \|\nabla f(x_{k+1})+\overline{g}\|_*^{(p+1)/p}.
    \end{align*}
    In Case (iii), $\innprod{\nabla f(x_k^0)+g}{u_k}< 0$ and $\innprod{\nabla f(x_k^1)+\overline{g} }{u_k}> 0$, there exist $0\leq\tau_k^1\leq\tau_k^2\leq 1$ and $(T_k^1,\widehat{g})\in\A(y_k^1,\beta)$ and $(T_k^2,\widetilde{g})\in\A(y_k^2,\beta)$ such that \eqref{eq:else3} holds. Let us set $\widehat{y}_k=\tfrac{1}{2}(y_k^1+y_k^2)$. Accordingly, it ensures that
    \begin{equation}\label{eq:nablaLkuk}
        \begin{split}
        \innprod{\alpha_k(\nabla f(T_k^1)+\widehat{g})+(1-\alpha_k)(\nabla f(T_k^2)+\widetilde{g})}{u_k}&=\alpha_k\beta_k^1+(1-\alpha_k)\beta_k^2\\ &=\tfrac{\beta_k^2}{\beta_k^2-\beta_k^1}\beta_k^1+(1-\tfrac{\beta_k^2}{\beta_k^2-\beta_k^1})\beta_k^2= 0.
        \end{split}
    \end{equation}
   Combining with the definition of $\eL_k(\cdot)$, $x_k=\widehat{y}_k-\tfrac{\tau_k^1+\tau_k^2}{2}u_k$, $x_{k+1}=\alpha_k T_k^1+(1-\alpha_k)T_k^2$, the convexity of $F(\cdot)$, and \eqref{eq:solProp3}, this implies
    \begin{align*}
        \eL_k(x_k)&= \alpha_k \left[f(T_k^1)+\innprod{\nabla f(T_k^1)}{x_k-T_k^1}\right]+(1-\alpha_k) \left[f(T_k^2)+\innprod{\nabla f(T_k^2)}{x_k-T_k^2}\right]+\psi(x_k)\\
        &\geq \alpha_k\left[F(T_k^1)+\innprod{\nabla f(T_k^1)+\widehat{g}}{x_k-T_k^1}\right]+(1-\alpha_k) \left[F(T_k^2)+\innprod{\nabla f(T_k^2)+\widetilde{g}}{x_k-T_k^2}\right]\\
        &\geq F(x_{k+1})+\alpha_k\innprod{\nabla f(T_k^1)+\widehat{g}}{\widehat{y}_k-T_k^1}+(1-\alpha_k) \innprod{\nabla f(T_k^2)+\widetilde{g}}{\widehat{y}_k-T_k^2}\\
        &= F(x_{k+1})+\alpha_k\innprod{\nabla f(T_k^1)+\widehat{g}}{y_k^1-\tfrac{1}{2}(y_k^1-y_k^2)-T_k^1}\\
        &~~~+(1-\alpha_k) \innprod{\nabla f(T_k^2)+\widetilde{g}}{y_k^2+\tfrac{1}{2}(y_k^1-y_k^2)-T_k^2}\\
        &= F(x_{k+1})+\alpha_k\innprod{\nabla f(T_k^1)+\widehat{g}}{y_k^1-\tfrac{1}{2}(\tau_k^1-\tau_k^2)u_k-T_k^1}\\
        &~~~+(1-\alpha_k) \innprod{\nabla f(T_k^2)+\widetilde{g}}{y_k^2+\tfrac{1}{2}(\tau_k^1-\tau_k^2)u_k-T_k^2}\\
        &\geq F(x_{k+1})+\left(\tfrac{1-\beta}{H}\right)^{1/p} \left(\alpha_k\|\nabla f(T_k^1)+\widehat{g}\|_*^{(p+1)/p}+(1-\alpha_k) \|\nabla f(T_k^2)+\widetilde{g}\|_*^{(p+1)/p}\right)\\
        &~~~-\alpha_k(\tau_k^1-\tau_k^2)\beta_k^1+\tfrac{1}{2}(\tau_k^1-\tau_k^2)[\alpha_k\beta_k^1+(1-\alpha_k)\beta_k^2].
    \end{align*}
    This, $\alpha_k\beta_k^1+(1-\alpha_k)\beta_k^2=0$, and $\alpha_k(\tau_k^1-\tau_k^2)\beta_k^1\leq \tfrac{1}{2} \left(\tfrac{1-\beta}{H}\right)^{1/p} g_k^{(p+1)/p}$ yield
    \begin{align*}
        \eL_k(x_k)
        &\geq F(x_{k+1})+\left(\tfrac{1-\beta}{H}\right)^{1/p} g_k^{(p+1)/p}      -\alpha_k(\tau_k^1-\tau_k^2)\beta_k^1,
    \end{align*}
    implying that \eqref{eq:psikPsikpsik1Ineq} holds. Summing both sides of \eqref{eq:psikPsikpsik1Ineq} from $i=k$ to $+\infty$ leads to our desired inequality \eqref{eq:sumGip1p}.
\end{proof}
%%%%%%%%%%%

Let us define
\begin{align*}
    D_0 = \max_{x\in\dom\psi} \set{\|x-x_0\| ~:~ F(x)\leq F(x_0)},\quad D_* = \max_{x\in\dom\psi} \set{\|x-x^*\| ~:~ F(x)\leq F(x_0)},
\end{align*}
which satisfies
\begin{equation}
    D_0\leq 2D_*.
\end{equation}
We next verify main properties of the estimating sequence $\seq{\Psi_k(\cdot)}$ in Algorithm \ref{alg:iaihoppaSS}.

%%%%%%%%%%%%%%%%%%%%%%%%%%%%%%%%%
\begin{lem}[estimating sequence properties]\label{lem:estSeqBkIneq}
    Let the sequence $\seq{x_k}$ generated by Algorithm \ref{alg:iaihoppaSS} be well defined. Then, for all $k\geq 0$, we have
    \begin{align}
        &\Psi_k(x)\leq A_k F(x)+\tfrac{1}{2} \|x-x_0\|^2,\quad x\in\E,\label{eq:estSeqBkIneq1}\\
        &A_k F(x_k)+B_k \leq \Psi_k^*=\min_{x\in\dom\psi} \Psi_k(x),\label{eq:estSeqBkIneq2}
    \end{align}
    where 
    \begin{equation}\label{eq:Bk2}
        B_k=\tfrac{1}{4} \left(\tfrac{1-\beta}{H}\right)^{1/p} \sum_{i=0}^{k-1} A_{i+1} g_i^{(p+1)/p}.
    \end{equation}
    Moreover, we have
    \begin{equation}\label{eq:normxkx*D*vkx*D*}
        \|\upsilon_k-x_k\|\leq D_0,\quad \|x_k-x^*\|\leq D_*, \quad \|\upsilon_k-x^*\|\leq D_*.
    \end{equation}
\end{lem}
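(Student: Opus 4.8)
The plan is to prove the three claims—the upper estimate \eqref{eq:estSeqBkIneq1}, the lower estimate \eqref{eq:estSeqBkIneq2}, and the boundedness relations \eqref{eq:normxkx*D*vkx*D*}—by induction on $k$, following the same pattern as the exact case in Theorem~\ref{thm:aihoppaSSConvRate}, but carefully tracking the inexactness tolerance $\beta$ and the modified coefficient $a_{k+1}$ defined through $\tfrac{a_{k+1}^2}{A_{k+1}+a_{k+1}}=\tfrac{1}{4}\left(\tfrac{1-\beta}{H}\right)^{1/p}g_k^{(1-p)/p}$. For the base case $k=0$ we have $A_0=B_0=\Psi_0^*=0$, so \eqref{eq:estSeqBkIneq1} and \eqref{eq:estSeqBkIneq2} reduce to trivial facts about $\Psi_0(x)=\tfrac12\|x-x_0\|^2$, and the bounds in \eqref{eq:normxkx*D*vkx*D*} hold by the definitions of $D_0,D_*$ together with $\upsilon_0=x_0$.

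For the inductive step on \eqref{eq:estSeqBkIneq1}, I would expand $\Psi_{k+1}(x)=\Psi_k(x)+a_{k+1}\eL_k(x)$, apply the induction hypothesis to bound $\Psi_k(x)$ above, and use that $\eL_k(\cdot)$ is (by construction in Algorithm~\ref{alg:iaihoppaSS}, being either a single linear-plus-$\psi$ model or a convex combination of two such models) a global lower model of $F$, so that $A_k F(x)+a_{k+1}\eL_k(x)\leq A_{k+1}F(x)$; combined with the quadratic term this yields \eqref{eq:estSeqBkIneq1}. The substantive work is the lower estimate \eqref{eq:estSeqBkIneq2}. Here I would use the $1$-strong convexity of $\Psi_k$ about its minimizer $\upsilon_k$ to write $\Psi_{k+1}(x)\geq \Psi_k^*+\tfrac12\|x-\upsilon_k\|^2+a_{k+1}\eL_k(x)$, invoke the induction hypothesis $\Psi_k^*\geq A_kF(x_k)+B_k$, minimize the resulting expression over $x$ (which produces a term $-\tfrac{a_{k+1}^2}{2}\|\cdot\|_*^2$), and then invoke Lemma~\ref{lem:fkvarphikfk1Ineq} in the form $\eL_k(x_k)\geq F(x_{k+1})+\tfrac12\left(\tfrac{1-\beta}{H}\right)^{1/p}g_k^{(p+1)/p}$. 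The crucial algebraic step is to combine $A_kF(x_k)+a_{k+1}\eL_k(x_k)$ into $A_{k+1}F(x_{k+1})$ plus the segment-search gain, exactly as in the exact proof, then use the defining identity for $a_{k+1}$ to cancel the quadratic penalty against half the segment-search gain, leaving precisely $A_{k+1}F(x_{k+1})+B_{k+1}$.

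The main obstacle will be handling the segment-search term and the inexactness simultaneously inside the lower-bound chain. In the exact theorem the inner products $\innprod{\nabla f(x_{k+1})+g}{\upsilon_k-x_{k+1}}$ were controlled via the exact optimality conditions \eqref{eq:foOptConds} and \eqref{eq:inprodnablaFY+}; in the inexact setting I must instead route the estimate through the three cases of Algorithm~\ref{alg:iaihoppaSS} and the already-established Lemma~\ref{lem:fkvarphikfk1Ineq}, which packages all three cases into the single inequality \eqref{eq:psikPsikpsik1Ineq}. The factor $\tfrac14$ in both $a_{k+1}$ and $B_k$ (versus $\tfrac12$ in the exact case) is what absorbs the loss $-\alpha_k(\tau_k^1-\tau_k^2)\beta_k^1$ bounded by $\tfrac12\left(\tfrac{1-\beta}{H}\right)^{1/p}g_k^{(p+1)/p}$ in \eqref{eq:else3}, so I must verify the constants line up. Finally, for \eqref{eq:normxkx*D*vkx*D*}, I would note from Lemma~\ref{lem:fkvarphikfk1Ineq} that $F(x_{k+1})\leq \eL_k(x_k)\leq F(x_k)\leq\cdots\leq F(x_0)$, so every iterate and every $\upsilon_k$ (as a minimizer whose value is controlled by \eqref{eq:estSeqBkIneq2}) lies in the sublevel set $\{F\leq F(x_0)\}$; the bounds $\|x_k-x^*\|\leq D_*$ and $\|\upsilon_k-x^*\|\leq D_*$ then follow directly from the definition of $D_*$, and $\|\upsilon_k-x_k\|\leq D_0\leq 2D_*$ from the definition of $D_0$ together with the triangle inequality, which closes the induction.
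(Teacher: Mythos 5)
Your induction skeleton, the treatment of the upper bound \eqref{eq:estSeqBkIneq1}, and the base case all match the paper, but the lower-bound step has a genuine gap: Lemma~\ref{lem:fkvarphikfk1Ineq} does \emph{not} ``package all three cases'' in a way that drives the estimating-sequence recursion. Invoking it in the form $\eL_k(x_k)\geq F(x_{k+1})+\tfrac12\left(\tfrac{1-\beta}{H}\right)^{1/p}g_k^{(p+1)/p}$ commits you to linearizing $\eL_k$ at the anchor $x_k$, while the quadratic term is centered at $\upsilon_k$; minimizing over $x$ then leaves a cross term $a_{k+1}\innprod{\mathcal{G}_k}{u_k}$ (with $\mathcal{G}_k$ as in \eqref{eq:Gk}) in addition to the penalty $-\tfrac{a_{k+1}^2}{2}\|\mathcal{G}_k\|_*^2$. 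In Case (i) this cross term is nonnegative by the branch condition and in Case (iii) it vanishes by \eqref{eq:nablaLkuk}, but in Case (ii) the branch condition is $\innprod{\nabla f(x_{k+1})+\overline{g}}{u_k}\leq 0$, so the cross term has the \emph{adverse} sign and cannot be absorbed by $g_k^{(p+1)/p}$. The paper handles Case (ii) by a different decomposition: expand both $A_kF(x_k)$ and $a_{k+1}\eL_k(x)$ around $x_{k+1}$, use $A_{k+1}(x_k-x_{k+1})+a_{k+1}(x-x_k)=A_{k+1}(\upsilon_k-x_{k+1})+a_{k+1}(x-\upsilon_k)-A_ku_k$ so that the $u_k$-term enters with coefficient $-A_k$ (now the favourable sign), and then apply \eqref{eq:solProp3} at the center $\upsilon_k$ (recall $(x_{k+1},\overline{g})\in\A(\upsilon_k,\beta)$ in this branch) with full coefficient $1$ rather than the packaged $\tfrac12$. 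Separately, even in Cases (i) and (iii) your ``cancel the penalty against half the gain'' needs the bridge $\|\mathcal{G}_k\|_*^2\leq 2g_k^2$ (the paper's \eqref{eq:normGkIneq}): in Case (iii) $\mathcal{G}_k$ is a convex combination while $g_k$ is a power mean, so $\|\mathcal{G}_k\|_*\neq g_k$, and without this inequality the defining identity for $a_{k+1}$ cannot close the recursion.

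Your argument for \eqref{eq:normxkx*D*vkx*D*} also fails. You assert that each $\upsilon_k$ lies in the sublevel set $\set{F\leq F(x_0)}$ ``as a minimizer whose value is controlled by \eqref{eq:estSeqBkIneq2}'', but $\upsilon_k$ minimizes $\Psi_k$, not $F$, and nothing in the lemma bounds $F(\upsilon_k)$; so $\|\upsilon_k-x^*\|\leq D_*$ does not follow from the definition of $D_*$. Moreover, even granting sublevel membership, the triangle inequality would only give $\|\upsilon_k-x_k\|\leq 2D_0$, not the claimed $D_0$. The correct route uses the strong convexity you already have: combine \eqref{eq:estSeqBkIneq1}--\eqref{eq:estSeqBkIneq2} into the sandwich $A_kF(x_k)+B_k+\tfrac12\|x-\upsilon_k\|^2\leq\Psi_k(x)\leq A_kF(x)+\tfrac12\|x-x_0\|^2$ for all $x\in\dom\psi$ (this is \eqref{eq:AkpsikAkpsi*Ineq}); plugging in $x=x_0$ gives $F(x_k)\leq F(x_0)$, hence $\|x_k-x_0\|\leq D_0$ and $\|x_k-x^*\|\leq D_*$; plugging in $x=x_k$ gives $\|\upsilon_k-x_k\|\leq\|x_k-x_0\|\leq D_0$; and plugging in $x=x^*$ gives $\|\upsilon_k-x^*\|\leq\|x_0-x^*\|\leq D_*$.
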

%%%%%%%%%

%%%%%%%%%%%%%
\begin{proof}
    The proof is given by induction on $k$. For $k=0$, $\Psi_0=\tfrac{1}{2} \|x-x_0\|^2$, $A_0=B_0=\Psi_0^*=0$, and so both \eqref{eq:estSeqBkIneq1} and \eqref{eq:estSeqBkIneq2} hold. Assuming both of the inequalities \eqref{eq:estSeqBkIneq1} and \eqref{eq:estSeqBkIneq2} for $k$, we show them for $k+1$. Considering Lines 4--13 of Algorithm~\ref{alg:iaihoppaSS}, three cases are recognized, namely, (i) $\innprod{\nabla f(x_k^0)+g}{u_k}\geq 0$; (ii) $\innprod{\nabla f(x_k^1)+\overline{g} }{u_k}\leq 0$; (iii) $\innprod{\nabla f(x_k^0)+g}{u_k}< 0$ and $\innprod{\nabla f(x_k^1)+\overline{g} }{u_k}> 0$.

    In all above cases, we have that $\eL_k(x)\leq F(x)$, as is clear from the definition of $\eL_k(\cdot)$ in Algorithm~\ref{alg:iaihoppaSS}. Then, it follows from the induction assumption that
    \begin{align*}
        \Psi_{k+1}(x)&=\Psi_k(x)+a_{k+1} \eL_k(x)
        \leq A_k F(x)+ \tfrac{1}{2} \|x-x_0\|^2+a_{k+1} \eL_k(x)\\
        &\leq A_{k+1} F(x)+ \tfrac{1}{2} \|x-x_0\|^2.
    \end{align*}
    Moreover, since $\Psi_k(\cdot)$ is $1$-strongly convex, the inequality \eqref{eq:estSeqBkIneq2} ensures that
    \begin{align*}
        \Psi_{k+1}(x) &= \Psi_k(x)+a_{k+1} \eL_k(x)\\
        &\geq \Psi_k(\upsilon_k)+\innprod{\nabla \Psi_k(\upsilon_k)}{x-\upsilon_k}+\tfrac{1}{2} \|x-\upsilon_k\|^2+a_{k+1} \eL_k(x)\\
        &\geq A_k F(x_k)+B_k +\tfrac{1}{2}\|x-\upsilon_k\|^2 +a_{k+1} \eL_k(x).
    \end{align*}
    For Case (i), $\innprod{\nabla f(x_{k+1})+g}{u_k}\geq 0$, together with the subgradient inequality for $\eL_k(\cdot)$, \eqref{eq:psikPsikpsik1Ineq}, \eqref{eq:nablaLkuk}, and $x_k=\upsilon_k-u_k$, the last inequality implies that 
    \begin{align*}
        \Psi_{k+1}(x) &\geq A_k F(x_k) +a_{k+1}[\eL_k(x_k)+\innprod{\nabla f(x_{k+1})+g}{x-x_k}]+B_k +\tfrac{1}{2}\|x-\upsilon_k\|^2\\
        &\geq A_{k+1} F(x_{k+1})+a_{k+1}\innprod{\nabla f(x_{k+1})+g}{x-x_k} \\
        &~~~+\tfrac{1}{2}A_{k+1}\left(\tfrac{1-\beta}{H}\right)^{1/p} g_k^{(p+1)/p}+B_k +\tfrac{1}{2}\|x-\upsilon_k\|^2\\
        &\geq A_{k+1} F(x_{k+1})+a_{k+1}\innprod{\nabla f(x_{k+1})+g}{x-\upsilon_k}\\
        &~~~ +\tfrac{1}{2}A_{k+1}\left(\tfrac{1-\beta}{H}\right)^{1/p} g_k^{(p+1)/p}+B_k +\tfrac{1}{2}\|x-\upsilon_k\|^2.
    \end{align*}
    In Case (ii), $\innprod{\nabla f(x_{k+1})+\overline{g}}{u_k}\leq 0$, it follows from the subgradient inequality and \eqref{eq:solProp3} that
    \begin{align*}
        \Psi_{k+1}(x) &\geq A_k F(x_k) +a_{k+1}[\eL_k(x_k)+\innprod{\nabla f(x_{k+1})+\overline{g}}{x-x_k}]+B_k +\tfrac{1}{2}\|x-\upsilon_k\|^2\\
        &\geq A_{k+1} F(x_{k+1})+\innprod{\nabla f(x_{k+1})+\overline{g}}{A_{k+1}(x_k-x_{k+1})
        +a_{k+1}(x-x_k)}\\
        &~~~+B_k+\tfrac{1}{2}\|x-\upsilon_k\|^2\\
         &\geq A_{k+1} F(x_{k+1})+\innprod{\nabla f(x_{k+1})+\overline{g}}{A_{k+1}(\upsilon_k-x_{k+1})
         +a_{k+1}(x-\upsilon_k)}\\
         &~~~+B_k+\tfrac{1}{2}\|x-\upsilon_k\|^2\\
         &\geq A_{k+1} F(x_{k+1})+a_{k+1}\innprod{\nabla f(x_{k+1})+\overline{g}}{x-\upsilon_k}
         +A_{k+1}\left(\tfrac{1-\beta}{H}\right)^{1/p} g_k^{(p+1)/p}\\
         &~~~+B_k +\tfrac{1}{2}\|x-\upsilon_k\|^2.
    \end{align*}
    For Case (iii), $\innprod{\nabla f(x_k^0)+g}{u_k}< 0$ and $\innprod{\nabla f(x_k^1)+\overline{g} }{u_k}> 0$, together with the subgradient inequality for $\eL_k(\cdot)$, \eqref{eq:psikPsikpsik1Ineq}, \eqref{eq:nablaLkuk}, and $x_k=\upsilon_k-u_k$, the last inequality ensures that 
    \begin{align*}
        \Psi_{k+1}(x) &\geq A_k F(x_k)+a_{k+1}\eL_k(x_k)+a_{k+1} \innprod{\alpha_k(\nabla f(T_k^1)+\widehat{g})+(1-\alpha_k)(\nabla f(T_k^2)+\widetilde{g})}{x-x_k}\\
        &~~~+B_k +\tfrac{1}{2}\|x-\upsilon_k\|^2\\
        &\geq A_{k+1} F(x_{k+1})+a_{k+1} \innprod{\alpha_k(\nabla f(T_k^1)+\widehat{g})+(1-\alpha_k)(\nabla f(T_k^2)+\widetilde{g})}{x-x_k}\\ &~~~+\tfrac{1}{2}A_{k+1}\left(\tfrac{1-\beta}{H}\right)^{1/p} g_k^{(p+1)/p}+B_k +\tfrac{1}{2}\|x-\upsilon_k\|^2\\
        &\geq A_{k+1} F(x_{k+1})+a_{k+1} \innprod{\alpha_k(\nabla f(T_k^1)+\widehat{g})+(1-\alpha_k)(\nabla f(T_k^2)+\widetilde{g})}{x-\upsilon_k}\\ &~~~+\tfrac{1}{2}A_{k+1}\left(\tfrac{1-\beta}{H}\right)^{1/p} g_k^{(p+1)/p}+B_k +\tfrac{1}{2}\|x-\upsilon_k\|^2.
    \end{align*}
    Let us set
    \begin{equation}\label{eq:Gk}
        \mathcal{G}_k = \left\{
        \begin{array}{ll}
            \nabla f(x_k^0)+g & \mathrm{if}~ \innprod{\nabla f(x_k^0)+g}{u_k}\geq 0, \\
            \nabla f(x_k^1)+\overline{g} & \mathrm{if}~ \innprod{\nabla f(x_k^1)+\overline{g}}{u_k}\leq 0, \\
            \alpha_k(\nabla f(T_k^1)+\widehat{g})+(1-\alpha_k)(\nabla f(T_k^2)+\widetilde{g}) & \mathrm{if}~ \innprod{\nabla f(x_k^0)+g}{u_k}< 0~ \&~ \innprod{\nabla f(x_k^1)+\overline{g} }{u_k}> 0.
        \end{array}
        \right.
    \end{equation}
    For Cases (i) and (ii), it is clear that $g_k=\|\mathcal{G}_k\|$. On the other hand, for Case (iii), it follows from the inequality $a+b\leq 2^{(t-1)/t}(a^t+b^t)^{1/t}$ for $a,b\geq 0$ and $t\geq 1$ that
    \begin{equation}\label{eq:normGkIneq}
        \begin{split}
        \|\mathcal{G}_k\| &= \alpha_k \|\nabla f(T_k^1)+\widehat{g}\|+(1-\alpha_k)\|\nabla f(T_k^2)+\widetilde{g}\|\\
        &\leq 2^{\tfrac{1}{p+1}}\left(\alpha_k^{\tfrac{p+1}{p}} \|\nabla f(T_k^1)+\widehat{g}\|^{\tfrac{p+1}{p}} +(1-\alpha_k)^{\tfrac{p+1}{p}} \|\nabla f(T_k^2)+\widetilde{g}\|^{\tfrac{p+1}{p}}\right)^{\tfrac{p}{p+1}}\\
        &\leq 2^{\tfrac{1}{p+1}}\left(\alpha_k \|\nabla f(T_k^1)+\widehat{g}\|^{\tfrac{p+1}{p}} +(1-\alpha_k) \|\nabla f(T_k^2)+\widetilde{g}\|^{\tfrac{p+1}{p}}\right)^{\tfrac{p}{p+1}} =  2^{\tfrac{1}{p+1}} g_k.
        \end{split}
    \end{equation}
    Therefore, in all three cases, we have $\|\mathcal{G}_k\|^2 \leq 2^{\tfrac{2}{p+1}} g_k^2\leq 2 g_k^2$. This, the above inequalities for Cases (i)--(iii), and Line 14 of Algorithm \ref{alg:iaihoppaSS} ensure that
    \begin{align*}
        \Psi_{k+1}(x) 
        &\geq A_{k+1} F(x_{k+1})+B_k+\tfrac{1}{2}A_{k+1}\left(\tfrac{1-\beta}{H}\right)^{\tfrac{1}{p}} g_k^{\tfrac{p+1}{p}}+\min_{x\in\E} \set{a_{k+1}\innprod{\mathcal{G}_k}{x-\upsilon_k}+\tfrac{1}{2}\|x-\upsilon_k\|^2}\\
        &\geq A_{k+1} F(x_{k+1})+B_k+\tfrac{1}{2}A_{k+1}\left(\tfrac{1-\beta}{H}\right)^{\tfrac{1}{p}} g_k^{\tfrac{p+1}{p}}-\tfrac{a_{k+1}^2}{2}\|\mathcal{G}_k \|^2\\
        &\geq A_{k+1} F(x_{k+1})+B_k+\tfrac{1}{2}A_{k+1}\left(\tfrac{1-\beta}{H}\right)^{\tfrac{1}{p}} g_k^{\tfrac{p+1}{p}}-a_{k+1}^2 g_k^2\\
        &= A_{k+1} F(x_{k+1})+B_{k+1}+\tfrac{1}{4}A_{k+1}\left(\tfrac{1-\beta}{H}\right)^{\tfrac{1}{p}} g_k^{\tfrac{p+1}{p}}-a_{k+1}^2 g_k^2\\
        &= A_{k+1} F(x_{k+1})+B_{k+1},
    \end{align*}
    leading to \eqref{eq:estSeqBkIneq2}.
    
    In view of the $1$-strongly convexity of $\Psi_k(\cdot)$, \eqref{eq:estSeqBkIneq1}, and \eqref{eq:estSeqBkIneq2}, we have    
    \begin{equation}\label{eq:AkpsikAkpsi*Ineq}
        A_k F(x_k)+B_k+\tfrac{1}{2} \|x-\upsilon_k\|^2 \leq \Psi_k(x)\leq A_k F(x)+\tfrac{1}{2} \|x-x_0\|^2, \quad \forall x\in\dom\psi.
    \end{equation}
    Setting $x=x_k$ implies $\|\upsilon_k-x_k\|\leq D_0$. If we set $x=x_0$, then we have $F(x_k)\leq F(x_0)$, i.e., $\|x_k-x_0\|\leq D_0$ and $\|x_k-x^*\|\leq D_*$. If we set $x=x^*$, then
    \begin{align*}
        \tfrac{1}{2} \|x^*-\upsilon_k\|^2\leq  B_k+\tfrac{1}{2} \|x^*-\upsilon_k\|^2 \leq A_k (F(x^*)-F(x_k))+\tfrac{1}{2} \|x^*-x_0\|^2 \leq \tfrac{1}{2} \|x^*-x_0\|^2,
    \end{align*}
    implying $\|\upsilon_k-x^*\|\leq \|x^*-x_0\| \leq D_*$.
\end{proof}
%%%%%%%%%%%

The following result gives the convergence rate $\mathcal{O}(k^{-(3p+1)/2})$ for the sequence $\seq{x_k}$ generated by Algorithm~\ref{alg:iaihoppaSS} in terms of the function values variation.

%%%%%%%%%%%%%%%%%%%%%%%%%%%%%$$%%%%
\begin{thm}[convergence rate of Algorithm~\ref{alg:iaihoppaSS}]\label{thm:convRateAlg6}
    Let the sequence $\seq{x_k}$ generated by Algorithm~\ref{alg:iaihoppaSS} be well defined. Then, for all $k\geq 0$, we have
    \begin{equation}\label{eq:estSeqBkIneqAlg6}
        F(x_k)-F(x^*)\leq \tfrac{4^p H R_0^{p+1}}{1-\beta} \left(1+\tfrac{2(k-1)}{p+1}\right)^{-\tfrac{3p+1}{2}},
    \end{equation}
    where $R_0=\|x^*-x_0\|$.
\end{thm}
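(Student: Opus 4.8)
The plan is to follow the proof of Theorem~\ref{thm:aihoppaSSConvRate} almost line for line, with Lemma~\ref{lem:estSeqBkIneq} supplying the inexact analogues of the two estimating-sequence inequalities. First I would evaluate \eqref{eq:estSeqBkIneq1} and \eqref{eq:estSeqBkIneq2} at $x=x^*$ and chain them into $A_k F(x_k)+B_k\le \Psi_k^*\le \Psi_k(x^*)\le A_k F(x^*)+\tfrac12 R_0^2$. Since $B_k\ge 0$, this at once gives the master estimate $F(x_k)-F(x^*)\le \tfrac{R_0^2}{2A_k}$, so the entire task reduces to proving a lower bound $A_k=\Omega\big(k^{(3p+1)/2}\big)$. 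Reading the same chain differently, now using $F(x^*)\le F(x_k)$ and the definition \eqref{eq:Bk2} of $B_k$, yields the summability estimate $\big(\tfrac{1-\beta}{H}\big)^{1/p}\sum_{i=0}^{k-1}A_{i+1}g_i^{(p+1)/p}\le 2R_0^2$.

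Next I would exploit the recursion for $a_{k+1}$ in Line~14 of Algorithm~\ref{alg:iaihoppaSS}, i.e. $\tfrac{a_{k+1}^2}{A_{k+1}}=\tfrac14\big(\tfrac{1-\beta}{H}\big)^{1/p}g_k^{(1-p)/p}$, to control the growth of $A_k$. Writing $A_{k+1}^{1/2}-A_k^{1/2}=\tfrac{a_{k+1}}{A_{k+1}^{1/2}+A_k^{1/2}}\ge \tfrac{a_{k+1}}{2A_{k+1}^{1/2}}$ and substituting the recursion gives $A_{k+1}^{1/2}-A_k^{1/2}\ge \tfrac14\big(\tfrac{1-\beta}{H}\big)^{1/(2p)}g_k^{(1-p)/(2p)}$. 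Telescoping and setting $\sigma_i=g_{i-1}^{(p-1)/(2p)}$ then produces $A_k\ge \tfrac{1}{16}\big(\tfrac{1-\beta}{H}\big)^{1/p}\big(\sum_{i=1}^k \tfrac{1}{\sigma_i}\big)^2$, while the summability estimate rewrites as $\sum_{i=1}^k A_i\,\sigma_i^{2(p+1)/(p-1)}\le 2\big(\tfrac{H}{1-\beta}\big)^{1/p}R_0^2$.

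The two technical lemmas now close the loop. Applying Lemma~\ref{lem:ineqPositNum} with $\gamma=\tfrac{2(p+1)}{p-1}$ and weights $\omega_i=A_i$ bounds $\sum_i 1/\sigma_i$ below by a power of $\sum_i A_i^{(p-1)/(3p+1)}$; inserting this into the lower bound on $A_k$ gives a self-referential inequality $A_k\ge \theta\big(\sum_{i=1}^k A_i^{(p-1)/(3p+1)}\big)^{(3p+1)/(p+1)}$, with $\theta$ absorbing the $H$, $R_0$ and $(1-\beta)$ factors. Lemma~\ref{lem:ineqPositSeq}, applied with $\alpha=\tfrac{p-1}{3p+1}$ and $\beta=\tfrac{3p+1}{p+1}$ (in the notation of that lemma, distinct from the tolerance), for which $\alpha\beta=\tfrac{p-1}{p+1}<1$, $\tfrac{1}{1-\alpha\beta}=\tfrac{p+1}{2}$ and $\tfrac{\beta}{1-\alpha\beta}=\tfrac{3p+1}{2}$, then upgrades this to $A_k\ge c\,\big(1+\tfrac{2(k-1)}{p+1}\big)^{(3p+1)/2}$, using $\tfrac{p-1}{p+1}+\tfrac{2}{p+1}k=1+\tfrac{2(k-1)}{p+1}$. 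Combining with $F(x_k)-F(x^*)\le \tfrac{R_0^2}{2A_k}$ and collecting constants gives \eqref{eq:estSeqBkIneqAlg6}.

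Most of the analytic difficulty, namely the three-case split according to whether the segment multiplier is $0$, $1$, or interior, and the norm comparison $\|\mathcal{G}_k\|^2\le 2g_k^2$ from \eqref{eq:normGkIneq}, is already encapsulated in Lemmas~\ref{lem:fkvarphikfk1Ineq} and~\ref{lem:estSeqBkIneq}, so the remaining work is essentially bookkeeping. Two points need care. First, the exponent $\gamma=\tfrac{2(p+1)}{p-1}$ and the variable $\sigma_i=g_{i-1}^{(p-1)/(2p)}$ degenerate at $p=1$, where $\sigma_i\equiv 1$ and Lemma~\ref{lem:ineqPositNum} is inapplicable; this case should be handled directly from $A_k\ge \tfrac{1}{16}\big(\tfrac{1-\beta}{H}\big)^{1/p}k^2$, which already gives the expected $\mathcal{O}(k^{-2})$ rate (matching $(3p+1)/2=2$). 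Second, the numerical prefactor must be extracted by propagating the two factors of $\tfrac14$ (from the $a_{k+1}$-recursion and from $B_k$), the factor $2$ in the summability bound, and the powers of $(1-\beta)$ through $\theta^{(p+1)/2}$; this is the delicate but routine step where the stated constant $\tfrac{4^pHR_0^{p+1}}{1-\beta}$ should be verified to dominate the accumulated factors.
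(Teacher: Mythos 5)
Your plan is essentially the paper's own proof: the paper derives the master estimate $F(x_k)-F(x^*)\le \tfrac{R_0^2}{2A_k}$ and the summability bound by evaluating the two inequalities of Lemma~\ref{lem:estSeqBkIneq} at $x=x^*$, and then literally states that ``the remainder of the proof is the same as that of Theorem~\ref{thm:aihoppaSSConvRate}''; your telescoping of the $a_{k+1}$-recursion, the application of Lemma~\ref{lem:ineqPositNum} with $\gamma=\tfrac{2(p+1)}{p-1}$, $\omega_i=A_i$, and of Lemma~\ref{lem:ineqPositSeq} with $\alpha=\tfrac{p-1}{3p+1}$, $\beta=\tfrac{3p+1}{p+1}$, together with the identity $\tfrac{p-1}{p+1}+\tfrac{2k}{p+1}=1+\tfrac{2(k-1)}{p+1}$, are exactly that remainder. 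In two places you are actually more careful than the paper: the paper's proof cites \eqref{eq:Bk1} (the exact-case $B_k$, lacking the factors $\tfrac14$ and $1-\beta$) where Lemma~\ref{lem:estSeqBkIneq} requires \eqref{eq:Bk2}, and you correctly replace the paper's equality $\tfrac{a_{k+1}}{A_{k+1}^{1/2}+A_k^{1/2}}=\tfrac{a_{k+1}}{2A_{k+1}^{1/2}}$ by the inequality $\ge$. Your separate treatment of $p=1$, where $\sigma_i\equiv1$ and direct telescoping already gives $A_k\ge\tfrac{1}{16}\tfrac{1-\beta}{H}k^2$, fills a degenerate case the paper passes over in silence.

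The one genuine problem is precisely the step you defer at the end: the stated constant does \emph{not} dominate the accumulated factors. Carrying your own bookkeeping to completion, $A_k\ge\tfrac1{16}\big(\tfrac{1-\beta}{H}\big)^{1/p}\big(\sum_{i=1}^k\tfrac{1}{\sigma_i}\big)^2$ combined with $B=2\big(\tfrac{H}{1-\beta}\big)^{1/p}R_0^2$ gives $\theta=\tfrac{1}{16}\,2^{-\tfrac{p-1}{p+1}}\big(\tfrac{1-\beta}{H}\big)^{\tfrac{2}{p+1}}R_0^{-\tfrac{2(p-1)}{p+1}}$, hence $\theta^{\tfrac{p+1}{2}}=2^{-\tfrac{5p+3}{2}}\,\tfrac{1-\beta}{H}\,R_0^{-(p-1)}$, and therefore
\begin{equation*}
F(x_k)-F(x^*)\;\le\;\tfrac{R_0^2}{2A_k}\;\le\;2^{\tfrac{5p+1}{2}}\,\tfrac{H R_0^{p+1}}{1-\beta}\left(1+\tfrac{2(k-1)}{p+1}\right)^{-\tfrac{3p+1}{2}}.
\end{equation*}
Since $2^{(5p+1)/2}=4^p\cdot 2^{(p+1)/2}>4^p$ for all $p\ge1$, this route (which is also the paper's route) proves the rate $\mathcal{O}(k^{-(3p+1)/2})$ but with a constant exceeding the stated $\tfrac{4^pHR_0^{p+1}}{1-\beta}$ by the factor $2^{(p+1)/2}$; the excess comes exactly from the factor $\tfrac14$ in Line~14 of Algorithm~\ref{alg:iaihoppaSS} and the factor $2$ in the summability bound, both absent in the exact case. (The $p=1$ sanity check shows the same thing: your direct bound gives $8$, not $4$, times $\tfrac{HR_0^2}{(1-\beta)k^2}$.) This defect is inherited from the paper, whose own constant tracking does not produce $4^p$ either; so either state your conclusion with the constant $2^{(5p+1)/2}$, or note explicitly that \eqref{eq:estSeqBkIneqAlg6} holds only up to this extra factor.
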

%%%%%%%%%

%%%%%%%%%%%%%
\begin{proof}
    Setting $x=x^*$ in the inequality \eqref{eq:AkpsikAkpsi*Ineq} clearly leads to
    \begin{equation*}
        F(x_k)-F(x^*)\leq \tfrac{R_0^2}{2A_k}.
    \end{equation*}
    Further, in the same way and by using \eqref{eq:Bk1}, it can be concluded that
    \begin{align*}
        \left(\tfrac{1}{H}\right)^{1/p} \sum_{i=0}^{k-1} A_{i+1} g_i^{(p+1)/p} \leq 2\left[A_k(F(x^*)-F(x_k))+\tfrac{1}{2} \|x^*-x_0\|^2\right]\leq R_0^2.
    \end{align*}
    Letting $k\to\infty$ yields
    \begin{equation*}
        \sum_{i=0}^{\infty} A_{i+1} g_i^{(p+1)/p}\leq H^{1/p}R_0^2.
    \end{equation*}
    The remainder of the proof is the same as those of Theorem~\ref{thm:aihoppaSSConvRate}.
\end{proof}
%%%%%%%%%%%

We next verify the convergence rate of the sequence $\seq{x_k}$ in terms of the norm of subgradients of the objective function $F(\cdot)$.

%%%%%%%%%%%%%%%%%%%%%%%%%%
\begin{thm}\label{thm:gt*}
    Let the sequence $\seq{x_k}$ be generated by Algorithm \ref{alg:iaihoppaSS}. Then, for any $t=3k-2$ with $k\geq 0$, we have
    \begin{equation}\label{eq:gt*}
        \|\mathcal{G}_t^*\|  \leq 2 (4^p) (p+1)^{\tfrac{p(3p+1)}{2(p+1)}} \tfrac{HR_0^p}{1-\beta} \left(\tfrac{1}{t-k}\right)^{\tfrac{3}{2}p},
    \end{equation}
    where $\|\mathcal{G}_t^*\|=\min_{0\leq i\leq t} \|\mathcal{G}_i\|$, $\mathcal{G}_i$ is defined in \eqref{eq:Gk}, and $R_0=\|x^*-x_0\|$.
\end{thm}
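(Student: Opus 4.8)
The plan is to combine the summability of the gradient proxies $g_i$ from Lemma~\ref{lem:fkvarphikfk1Ineq} with the function-value rate from Theorem~\ref{thm:convRateAlg6}, and then pass from a bound on the \emph{sum} of gradient norms to a bound on their \emph{minimum}. First I would record the elementary relation between $\mathcal{G}_i$ (defined in \eqref{eq:Gk}) and $g_i$ established inside the proof of Lemma~\ref{lem:estSeqBkIneq}: in all three branches of Algorithm~\ref{alg:iaihoppaSS} one has $\|\mathcal{G}_i\|\le 2^{1/(p+1)}g_i$ (this is \eqref{eq:normGkIneq} in Case (iii), and an equality in Cases (i)--(ii)), hence $\|\mathcal{G}_i\|^{(p+1)/p}\le 2^{1/p}g_i^{(p+1)/p}$. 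Since $\|\mathcal{G}_t^*\|=\min_{0\le i\le t}\|\mathcal{G}_i\|$ is a minimum over a superset of the indices $\{k,\dots,t\}$, and that index set has $t-k+1=2k-1$ elements when $t=3k-2$, the smallest of those $2k-1$ terms satisfies
\begin{equation*}
(2k-1)\,\|\mathcal{G}_t^*\|^{(p+1)/p}\le \sum_{i=k}^{t}\|\mathcal{G}_i\|^{(p+1)/p}\le 2^{1/p}\sum_{i=k}^{\infty}g_i^{(p+1)/p}.
\end{equation*}

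Next I would bound the tail sum: \eqref{eq:sumGip1p} gives $\sum_{i=k}^{\infty}g_i^{(p+1)/p}\le 2\left(\tfrac{H}{1-\beta}\right)^{1/p}(F(x_k)-F^*)$, and \eqref{eq:estSeqBkIneqAlg6} controls $F(x_k)-F^*$ by $\tfrac{4^pHR_0^{p+1}}{1-\beta}\bigl(1+\tfrac{2(k-1)}{p+1}\bigr)^{-(3p+1)/2}$. Using $t=3k-2$, hence $t-k=2(k-1)$, I would make two monotonicity simplifications: $2k-1\ge t-k$, so that $\tfrac{1}{2k-1}\le\tfrac{1}{t-k}$; and $1+\tfrac{2(k-1)}{p+1}\ge\tfrac{t-k}{p+1}$, so that the rate factor is at most $(p+1)^{(3p+1)/2}(t-k)^{-(3p+1)/2}$. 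Collecting the two powers of $t-k$ produces the exponent $-1-\tfrac{3p+1}{2}=-\tfrac{3(p+1)}{2}$ on the right-hand side of $\|\mathcal{G}_t^*\|^{(p+1)/p}$, while the $H$ and $(1-\beta)$ factors combine as $\left(\tfrac{H}{1-\beta}\right)^{1/p}\tfrac{H}{1-\beta}=\left(\tfrac{H}{1-\beta}\right)^{(p+1)/p}$.

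Finally I would raise both sides to the power $p/(p+1)$. The exponent on $t-k$ becomes $-3p/2$; the factor $(p+1)^{(3p+1)/2}$ becomes $(p+1)^{p(3p+1)/(2(p+1))}$; the power $R_0^{p+1}$ becomes $R_0^p$ and $\left(\tfrac{H}{1-\beta}\right)^{(p+1)/p}$ becomes $\tfrac{H}{1-\beta}$; and the numerical prefactor $2^{(p+1)/p}$ collapses to $2$. The only genuine slack is in the power of $4$: the computation yields $4^{p^2/(p+1)}$, which I would bound by $4^p$ using $p^2\le p(p+1)$, landing exactly on the stated constant $2\,(4^p)(p+1)^{p(3p+1)/(2(p+1))}$.

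No step is hard; the work is exponent bookkeeping, and the main thing to get right is the pair of clean-but-lossy inequalities ($2k-1\ge t-k$ and $4^{p^2/(p+1)}\le 4^p$) that convert the raw estimate into the advertised closed form. I would also remark that the bound is informative only for $k\ge 2$, since $t-k=2(k-1)$ must be positive for the right-hand side $\left(\tfrac{1}{t-k}\right)^{3p/2}$ to make sense, which is the effective range of the statement.
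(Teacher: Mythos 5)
Your proof is correct and follows essentially the same route as the paper's: relate $\|\mathcal{G}_i\|$ to $g_i$ via \eqref{eq:normGkIneq}, bound the minimum by the average of $\|\mathcal{G}_i\|^{(p+1)/p}$ over the window between $k$ and $t=3k-2$, control the tail sum through \eqref{eq:sumGip1p} and \eqref{eq:estSeqBkIneqAlg6}, and finish by raising to the power $p/(p+1)$. The only differences are in bookkeeping (you keep the factor $2$ from \eqref{eq:sumGip1p} explicit and absorb the slack as $4^{p^2/(p+1)}\le 4^p$ at the end, whereas the paper folds it into a $4^{p+1}$ earlier), and your version is if anything slightly tidier than the paper's.
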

%%%%%%%%%

%%%%%%%%%%%%%
\begin{proof}
    From Lines 4-13 of Algorithm \ref{alg:iaihoppaSS}, we recognize three cases: (i) $\innprod{\nabla f(x_k^0)+g}{u_k}\geq 0$; (ii) $\innprod{\nabla f(x_k^1)+\overline{g} }{u_k}\leq 0$; (iii) $\innprod{\nabla f(x_k^0)+g}{u_k}< 0$ and $\innprod{\nabla f(x_k^1)+\overline{g} }{u_k}> 0$. In Cases (i) and (ii), we have $\|\mathcal{G}_k\|=g_k$, for the sequence $\seq{\mathcal{G}_k}$ given by \eqref{eq:Gk}. In Case (iii), the inequality \eqref{eq:normGkIneq} implies
    \begin{align*}
        \|\mathcal{G}_k\| \leq 2^{\tfrac{1}{p+1}}\left(\alpha_k \|\nabla f(T_k^1)+\widehat{g}\|^{\tfrac{p+1}{p}} +(1-\alpha_k) \|\nabla f(T_k^2)+\widetilde{g}\|^{\tfrac{p+1}{p}}\right)^{\tfrac{p}{p+1}} \leq 2 g_k.
    \end{align*}
    Setting $g_t^* =\min_{0 \leq i \leq t} g_i$ and invoking  \eqref{eq:sumGip1p} and \eqref{eq:estSeqBkIneqAlg6} lead to
    \begin{align*}
        \|\mathcal{G}_k^*\|^{\tfrac{p+1}{p}} &\leq  (2g_k^*)^{\tfrac{p+1}{p}} \leq \tfrac{1}{t-k}2^{\tfrac{p+1}{p}}\sum_{i=k+1}^t g_i^{\tfrac{p+1}{p}} \leq \tfrac{1}{t-k}  2^{\tfrac{p+1}{p}} \left(\tfrac{H}{1-\beta}\right)^{\tfrac{1}{p}} (F(x_{k})-F^*) \\
        &\leq \tfrac{1}{t-k} 2^{\tfrac{p+1}{p}}\left(\tfrac{H}{1-\beta}\right)^{\tfrac{1}{p}} \tfrac{4^{p+1} H R_0^{p+1}}{1-\beta} \left(1+\tfrac{2(k-1)}{p+1}\right)^{-\tfrac{3p+1}{2}}\\
        &\leq \tfrac{1}{t-k} 2^{\tfrac{p+1}{p}}\left(\tfrac{H}{1-\beta}\right)^{\tfrac{1}{p}} \tfrac{4^{p+1} H R_0^{p+1}}{1-\beta} \left(\tfrac{p+1}{2(k-1)}\right)^{\tfrac{3p+1}{2}}\\
        &\leq \tfrac{2^{\tfrac{p+1}{p}} 4^{p+1} H^{\tfrac{p+1}{p}} R_0^{p+1}(p+1)^{\tfrac{3p+1}{2}}}{(1-\beta)^{\tfrac{p+1}{p}}} \left(\tfrac{1}{t-k}\right)^{\tfrac{3(p+1)}{2}},
    \end{align*}
    giving \eqref{eq:gt*}.
\end{proof}
%%%%%%%%%%%

%%%%%%%%%%%%%%%%%%%%%%%%%%%%%%%%%%%%%%%%%%%%%%%%%%%%%%%%%%%%%%%%%%%%%%%%%%%%%%%%%%
\subsection{Implementable stopping criterion for Algorithm~\ref{alg:iaihoppaSS}} \label{sec:imStopCrit}
Here, we verify two stopping criteria for Algorithm~\ref{alg:iaihoppaSS}.
We note that the estimating sequence employed in Algorithm~\ref{alg:iaihoppaSS} is different with respect to the common one used in Algorithm~\ref{alg:aihoppaSS}. Hence, we translate Remark~\ref{rem:stopCrit1} in the following lemma, which ensures that either of $F(x_k)-\mathcal{L}_k^*\leq\varepsilon$ and $A_k\geq R^2/(2\varepsilon)$ can be used as a stopping criterion for Algorithm~\ref{alg:iaihoppaSS}. 

%%%%%%%%%%%%%%%%%%%%%%%%%%%%%%%%%%
\begin{lem}\label{lem:ineqFkLkhat}
    Let the sequence $\seq{x_k}$ be generated by Algorithm~\ref{alg:iaihoppaSS} and let $\func{\widehat{\mathcal{L}}_k}{\E}{\mathbb{R}}$ be the function given by
    \begin{align*}
        \widehat{\mathcal{L}}_k(x) =\tfrac{1}{A_k} \sum_{i=1}^k \mathcal{L}_i(x)
    \end{align*}
    and $Q_R=\set{x\in\dom\psi ~|~ \|x-x_0\|\leq R}$. If $\widehat{\mathcal{L}}_k^*=\min_{x\in Q_R} \widehat{\mathcal{L}}_k(x)$, then
    \begin{equation}\label{eq:ineqFkLkhat}
         F(x_k)-\widehat{\mathcal{L}}_k^* \leq \tfrac{1}{2A_k} R^2.
    \end{equation} 
\end{lem}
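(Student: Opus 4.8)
The plan is to mirror the argument of Remark~\ref{rem:stopCrit1}, adapting it to the modified estimating sequence of Algorithm~\ref{alg:iaihoppaSS}. The backbone is the already-established inequality \eqref{eq:estSeqBkIneq2}, namely $A_k F(x_k)+B_k\leq \Psi_k^*$. Since $B_k\geq 0$ (immediate from \eqref{eq:Bk2}, because $\beta<1$, $A_{i+1}>0$, and $g_i\geq 0$), this already yields $A_k F(x_k)\leq \Psi_k^*$. The whole lemma is then obtained by bounding $\Psi_k^*$ from above after restricting the inner minimization to the ball $Q_R$.

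First I would record the identity linking $\widehat{\mathcal{L}}_k$ and $\Psi_k$. Unrolling the recursion $\Psi_{k+1}(x)=\Psi_k(x)+a_{k+1}\eL_k(x)$ from $\Psi_0(x)=\tfrac12\|x-x_0\|^2$ gives $\Psi_k(x)-\tfrac12\|x-x_0\|^2=\sum_{i=1}^k a_i\,\eL_{i-1}(x)$. Matching this with the definition of $\widehat{\mathcal{L}}_k$ (with the per-step models read together with their weights $a_i$) produces the clean identity
\begin{align*}
    \Psi_k(x)=A_k\,\widehat{\mathcal{L}}_k(x)+\tfrac12\|x-x_0\|^2,\qquad x\in\dom\psi .
\end{align*}

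Next I would chain the inequalities. Because $Q_R\subseteq\dom\psi$, minimizing over the larger set can only decrease the value, so $\Psi_k^*\leq\min_{x\in Q_R}\Psi_k(x)$; and on $Q_R$ one has $\|x-x_0\|\leq R$, whence
\begin{align*}
    \Psi_k^* &\leq \min_{x\in Q_R}\Psi_k(x)
    = \min_{x\in Q_R}\set{A_k\,\widehat{\mathcal{L}}_k(x)+\tfrac12\|x-x_0\|^2}\\
    &\leq \min_{x\in Q_R}\set{A_k\,\widehat{\mathcal{L}}_k(x)+\tfrac12 R^2}
    = A_k\,\widehat{\mathcal{L}}_k^*+\tfrac12 R^2 .
\end{align*}
Combining this with $A_k F(x_k)\leq\Psi_k^*$ and dividing by $A_k>0$ gives exactly \eqref{eq:ineqFkLkhat}.

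I do not expect a genuine obstacle here: the result is a short, routine transcription of Remark~\ref{rem:stopCrit1}. The only point requiring care is the index/weight bookkeeping in the identity of the second paragraph. The estimating sequence of Algorithm~\ref{alg:iaihoppaSS} accumulates the models $\eL_{i-1}$ with weights $a_i$ (and uses the convex-combination models of Case~(iii) rather than a single linearization as in Algorithm~\ref{alg:aihoppaSS}), so I must ensure that $\widehat{\mathcal{L}}_k$ is formed with precisely these weights; otherwise the telescoping identity $A_k\widehat{\mathcal{L}}_k(x)=\Psi_k(x)-\tfrac12\|x-x_0\|^2$ fails and the subsequent bound on $\Psi_k^*$ does not go through. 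Everything downstream is then a direct copy of the constants and the restriction argument already used in Remark~\ref{rem:stopCrit1}.
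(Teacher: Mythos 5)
Your proof is correct and follows essentially the same route as the paper's: both start from \eqref{eq:estSeqBkIneq2} (discarding $B_k\geq 0$), use the identity $\Psi_k(x)=A_k\widehat{\mathcal{L}}_k(x)+\tfrac{1}{2}\|x-x_0\|^2$, and then restrict the minimization to $Q_R$ to bound $\Psi_k^*$ by $A_k\widehat{\mathcal{L}}_k^*+\tfrac{1}{2}R^2$. Your explicit unrolling of the recursion, and your remark that $\widehat{\mathcal{L}}_k$ must be formed with the weights $a_i$ attached to the models $\eL_{i-1}$, only spell out what the paper leaves implicit in the step $\Psi_k^*=\min_{x\in\dom\psi}\set{A_k \widehat{\mathcal{L}}_k(x)+\tfrac{1}{2}\|x-x_0\|^2}$.
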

%%%%%%%%%

%%%%%%%%%%%%%
\begin{proof}
    From \eqref{eq:estSeqBkIneq2} and the definition of $\widehat{\mathcal{L}}_k(\cdot)$, we obtain
\begin{align*}
    A_k F(x_k) &\leq A_k F(x_k)+B_k\leq \Psi_k^*=\min_{x\in\dom\psi}\set{A_k \widehat{\mathcal{L}}_k(x)+\tfrac{1}{2}\|x-x_0\|^2}\\
    &\leq \min_{x\in Q_R}\set{A_k \widehat{\mathcal{L}}_k(x)+\tfrac{1}{2}\|x-x_0\|^2}\\
    &\leq \min_{x\in Q_R}\set{A_k \widehat{\mathcal{L}}_k(x)+\tfrac{1}{2} R^2}\\
    & = A_k \widehat{\mathcal{L}}_k^*+\tfrac{1}{2} R^2,
\end{align*}
which establishes \eqref{eq:ineqFkLkhat}.
\end{proof}
%%%%%%%%%%%

Note that $F(x)\geq\widehat{\mathcal{L}}_k(x)$ implying $F(x^*)\geq\widehat{\mathcal{L}}_k^*$, i.e., $F(x_k)-F(x^*)\leq F(x_k)-\widehat{\mathcal{L}}_k^*\leq \tfrac{1}{2A_k} R^2$. As such, $F(x_k)-\widehat{\mathcal{L}}_k^*\leq \varepsilon$ or $A_k\geq R^2/(2\varepsilon)$ suggests $F(x_k)-F(x^*)\leq \varepsilon$, for the accuracy parameter $\varepsilon>0$. Hence, one can employ either $F(x_k)-\widehat{\mathcal{L}}_k^*\leq\varepsilon$ or $A_k\geq R^2/(2\varepsilon)$ as a stopping criterion for Algorithm~\ref{alg:iaihoppaSS}.

We next investigate an inequality based on {\it primal-dual gap}, which suggests a new stopping criterion. Let us define $F^*=F(x^*)$, where $F(\cdot)$ is the composite function defined in \eqref{eq:prob}. Let us assume that the objective function $f(\cdot)$ has the structure $f(x)=\max_{u\in Q_d} \set{\innprod{Ax}{u}-\varphi(u)}$, i.e.,
\begin{align*}
    F(x)=\max_{u\in Q_d} \set{\innprod{Ax}{u}-\varphi(u)}+\psi(x),
\end{align*}
for some convex set $Q_d$ and a $q$-strongly convex function $\func{\varphi}{\E}{\mathbb{R}}$, which is
\begin{align*}
    \innprod{\nabla\varphi(x)-\nabla\varphi(y)}{x-y}\geq \sigma_{\varphi} \|x-y\|^q,
\end{align*}
for $\sigma_{\varphi}>0$. We note that the function $f(\cdot)$ is differentiable with H\"older continuous gradients 
\begin{equation}\label{eq:gradf}
    \nabla f(x)=A^*u(x),
\end{equation}
where H\"older's constants are $\nu=1/(q-1)$ and $L_\nu=(1/\sigma_\varphi)^\nu \|A\|^{1+\nu}$; see \cite[Lemma~6.4.3]{nesterov2018lectures}. 

%%%%%%%%%%%%%%%%%%%%%%%%%%%%%%%%%%%%%%%
\begin{prop}[primal-dual gap]\label{prop:primDualGap}
    Let the sequence $\seq{x_k}$ be generated by Algorithm~\ref{alg:iaihoppaSS}. Then, for all $k\geq 0$, we have
    \begin{equation}\label{eq:FkPhikIneq00}
        0\leq (F(x_k)-F^*)+(\phi^*-\phi(\widehat{u}_k))\leq F(x_k)-\phi(\widehat{u}_k))\leq \tfrac{1}{2A_k}R_0^2,
    \end{equation}
    where $\widehat{u}_k$ is the dual solution, $R_0=\|x_0-x^*\|$ and
    \begin{align*}
        \phi^*=\max_{u\in Q_d} \phi(u), \quad \phi(u)=\max_{x\in \dom\psi} \set{\innprod{Ax}{u}-\varphi(u)}+\psi(x). 
    \end{align*}
\end{prop}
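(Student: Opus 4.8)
The plan is to split the three-term inequality \eqref{eq:FkPhikIneq00} into an easy duality part (the two left inequalities) and the quantitative part (the rightmost bound), and to attack the latter through the estimate-sequence lower bound already established in Lemma~\ref{lem:estSeqBkIneq} together with a suitably constructed dual certificate $\widehat u_k$.

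First I would dispose of the left inequalities. Since $x^*$ minimizes $F$, we have $F(x_k)-F^*\geq 0$, and since $\phi^*=\max_{u\in Q_d}\phi(u)$ we have $\phi^*-\phi(\widehat u_k)\geq 0$; hence their sum is nonnegative, which gives the first inequality. For the middle inequality it suffices to invoke weak duality $\phi^*\leq F^*$, because $(F(x_k)-F^*)+(\phi^*-\phi(\widehat u_k)) = (F(x_k)-\phi(\widehat u_k)) + (\phi^*-F^*)\leq F(x_k)-\phi(\widehat u_k)$. Weak duality itself follows from the saddle structure $F(x)=\max_{u\in Q_d}\set{\innprod{Ax}{u}-\varphi(u)}+\psi(x)$, since for every feasible $u$ one has $\phi(u)\leq \innprod{Ax}{u}-\varphi(u)+\psi(x)\leq F(x)$ for all $x\in\dom\psi$.

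For the rightmost inequality I would start from \eqref{eq:estSeqBkIneq2}, drop the nonnegative term $B_k$, and use the representation of $\Psi_k^*$ from Lemma~\ref{lem:ineqFkLkhat}, namely $A_kF(x_k)\leq\Psi_k^*=\min_{x\in\dom\psi}\set{A_k\widehat{\mathcal{L}}_k(x)+\tfrac12\|x-x_0\|^2}$, where $\widehat{\mathcal{L}}_k$ is the aggregated linear-plus-$\psi$ estimate. The key step is to read each linear model through the dual representation \eqref{eq:gradf}: since $\nabla f(y)=A^*u(y)$ and $f(y)=\innprod{Ay}{u(y)}-\varphi(u(y))$, every linear piece collapses to $\ell_y(x)=\innprod{Ax}{u(y)}-\varphi(u(y))$. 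I would then define $\widehat u_k$ as the convex combination of the dual maximizers $u(\cdot)$ produced along the iterations, carrying the same weights $a_{i+1}/A_k$ (and, in Case~(iii) of Algorithm~\ref{alg:iaihoppaSS}, the inner $\alpha_k$-weights); convexity of $Q_d$ guarantees $\widehat u_k\in Q_d$. Applying Jensen's inequality to the convex function $\varphi$ then yields $\widehat{\mathcal{L}}_k(x)\leq\innprod{Ax}{\widehat u_k}-\varphi(\widehat u_k)+\psi(x)$, and minimizing over $x$ gives $\widehat{\mathcal{L}}_k^*:=\min_{x\in\dom\psi}\widehat{\mathcal{L}}_k(x)\leq\phi(\widehat u_k)$.

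Combining the two displays would finish the argument: bounding the prox term by $\tfrac12 R_0^2$ and using $\widehat{\mathcal{L}}_k^*\leq\phi(\widehat u_k)$ gives $A_kF(x_k)\leq A_k\widehat{\mathcal{L}}_k^*+\tfrac12 R_0^2\leq A_k\phi(\widehat u_k)+\tfrac12 R_0^2$, and dividing by $A_k$ yields $F(x_k)-\phi(\widehat u_k)\leq R_0^2/(2A_k)$. The main obstacle I expect is twofold. First, the construction of $\widehat u_k$ must be carried out consistently across the three branches of Algorithm~\ref{alg:iaihoppaSS}, and in Case~(iii) one has to track the $\alpha_k$-weighted combination of $u(T_k^1)$ and $u(T_k^2)$ so that the aggregate linear form collapses exactly to $\innprod{\cdot}{\widehat u_k}$ while Jensen controls the $\varphi$-terms. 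Second, and more delicate, is passing from the prox-regularized minimization defining $\Psi_k^*$ to the unregularized dual value $\phi(\widehat u_k)$: the residual $\min_{x\in\dom\psi}\set{A_k\widehat{\mathcal{L}}_k(x)+\tfrac12\|x-x_0\|^2}-A_k\widehat{\mathcal{L}}_k^*$ is essentially half the squared distance from the prox-center $x_0$ to the minimizer of the aggregated model, and identifying and bounding this quantity by $\tfrac12 R_0^2$ (via the evaluation $x=x^*$ and the feasibility estimates $\|\upsilon_k-x^*\|\leq D_*$ of Lemma~\ref{lem:estSeqBkIneq}) is the quantitative heart of the proof.
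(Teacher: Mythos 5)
Your proposal follows essentially the same route as the paper's own proof: the two left-hand inequalities come from nonnegativity plus weak duality ($\phi^*\le F^*$), and the right-hand bound comes from the estimating-sequence inequality \eqref{eq:estSeqBkIneq2}, the collapse of each linear model $\ell_y(x)+\psi(x)=\innprod{Ax}{u(y)}-\varphi(u(y))+\psi(x)$ via \eqref{eq:gradf}, the definition of $\widehat{u}_k$ as the $a_i/A_k$- and $\alpha_i$-weighted combination of the dual maximizers across the three branches, and Jensen's inequality for the convex $\varphi$; this is exactly how the paper obtains $\min_{x}\widehat{\mathcal{L}}_k(x)\le-\varphi(\widehat{u}_k)+\psi_*(-A^*\widehat{u}_k)=\phi(\widehat{u}_k)$.

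The one place where you diverge is the step you yourself call the ``quantitative heart'': passing from $\Psi_k^*=\min_{x\in\dom\psi}\set{A_k\widehat{\mathcal{L}}_k(x)+\tfrac{1}{2}\|x-x_0\|^2}$ to $A_k\min_{x\in\dom\psi}\widehat{\mathcal{L}}_k(x)+\tfrac{1}{2}R_0^2$. The paper asserts this inequality without justification; you correctly flag it as delicate, but the patch you propose does not close it. Evaluating the regularized minimum at $x=x^*$ gives $A_kF(x_k)\le A_k\widehat{\mathcal{L}}_k(x^*)+\tfrac{1}{2}R_0^2$, and the collapse-plus-Jensen bound controls $\widehat{\mathcal{L}}_k(x^*)$ only by $\innprod{Ax^*}{\widehat{u}_k}-\varphi(\widehat{u}_k)+\psi(x^*)$, a quantity that dominates $\phi(\widehat{u}_k)$ (the latter is the minimum over $x$ of that same expression) and is itself $\le F(x^*)$; so this evaluation reproves the primal estimate $F(x_k)-F^*\le R_0^2/(2A_k)$ of Theorem~\ref{thm:convRateAlg6}, not the primal--dual statement \eqref{eq:FkPhikIneq00}. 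Nor do the estimates $\|\upsilon_k-x^*\|\le D_*$ help: what is actually needed is that a minimizer of the aggregated model (equivalently, of $x\mapsto\innprod{Ax}{\widehat{u}_k}+\psi(x)$ over $\dom\psi$) lies within distance $R_0$ of $x_0$, and this is verified neither in your argument nor in the paper. In short: same approach, same unguarded step; your attempted repair of that step, as stated, does not deliver the dual bound.
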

%%%%%%%%%

%%%%%%%%%%%%%
\begin{proof}
    By the definition of $\widehat{\mathcal{L}}_k(\cdot)$, we get
    \begin{align*}
        \widehat{\mathcal{L}}_k(x) = \tfrac{1}{A_k} \sum_{i\in I_1} a_i[\ell_{x_{i+1}}(x)+\psi(x)]+\sum_{i\in I_2} a_i[\alpha_i \ell_{T_i^1}(x)+(1-\alpha_i) \ell_{T_i^2}(x)+\psi(x)],
    \end{align*}
    where 
    \begin{align}
        & I_1 = \set{i\in\set{1,\ldots,k} ~|~ \innprod{\nabla f(x_i^0)+g}{u_i}\geq 0~\mathrm{or}~\innprod{\nabla f(x_i^1)+\overline{g}}{u_i}\leq 0},\\
        & I_2 = \set{i\in\set{1,\ldots,k} ~|~ \innprod{\nabla f(x_i^0)+g}{u_i}< 0 ~\&~ \innprod{\nabla f(x_i^1)+\overline{g} }{u_i}> 0}.
    \end{align}
    It follows from \eqref{eq:gradf} that $\nabla f(x_{i+1}) = A^*u(x_{i+1})$, $\nabla f(T_i^1) = A^*u(T_i^1)$, and $\nabla f(T_i^1) = A^*u(T_i^1)$,
    which ensures
    \begin{align*}
        \min_{x\in\E} &\widehat{\mathcal{L}}_k(x) = \min \set{\tfrac{1}{A_k} \left(\sum_{i\in I_1} a_i[\ell_{x_{i+1}}(x)+\psi(x)]+\sum_{i\in I_2} a_i[\alpha_i \ell_{T_i^1}(x)+(1-\alpha_i) \ell_{T_i^2}(x)+\psi(x)]\right)}\\
        &= \min_{x\in\E} \left\{\tfrac{1}{A_k} \sum_{i\in I_1} a_i[\max_{u\in Q_d} \set{\innprod{Ax_{i+1}}{u(x_{i+1})}-\varphi(u(x_{i+1}))} + \innprod{A^*u(x_{i+1})}{x-x_{i+1}}+\psi(x)]\right. \\
        &~~~+\tfrac{1}{A_k} \sum_{i\in I_2} a_i \left[\alpha_i \left(\max_{u\in Q_d} \set{\innprod{AT_i^1}{u(T_i^1)}-\varphi(u(T_i^1))} + \innprod{A^*u(T_i^1)}{x-T_i^1}\right)+\psi(x)\right]\\
        &~~~\left.+\tfrac{1}{A_k} \sum_{i\in I_2} a_i \left[(1-\alpha_i) \left(\max_{u\in Q_d} \set{\innprod{AT_i^2}{u(T_i^2)}-\varphi(u(T_i^2))} + \innprod{A^*u(T_i^2)}{x-T_i^2}\right)+\psi(x)\right]\right\}.
    \end{align*}
    Hence, the subgradient inequality implies
    \begin{align*}
        \min_{x\in\E} \widehat{\mathcal{L}}_k(x) 
        &\leq \min_{x\in\E} \left\{\tfrac{1}{A_k} \sum_{i=1}^k a_i[\max_{u\in Q_d} \set{\innprod{Ax}{u((x_{i+1})}-\varphi(u((x_{i+1}))} +\psi(x)]\right. \\
        &~~~+\tfrac{1}{A_k} \sum_{i\in I_2} a_i \left[\alpha_i \max_{u\in Q_d} \set{\innprod{Ax}{u(T_i^1)}-\varphi(u(T_i^1))} +\psi(x)\right]\\
        &~~~\left.+\tfrac{1}{A_k} \sum_{i\in I_2} a_i \left[(1-\alpha_i) \max_{u\in Q_d} \set{\innprod{Ax}{u(T_i^2)}-\varphi(u(T_i^2))} +\psi(x)\right]\right\}\\
        &\leq -\varphi(\widehat{u}_k) +\min_{x\in\E} \set{\innprod{Ax}{\widehat{u}_k} +\psi(x)}\\
        &= -\varphi(\widehat{u}_k)+\psi_*(-A^*\widehat{u}_k),
    \end{align*}
    where $\widehat{u}_k$ is the dual solution. 
    
    On the other hand, from the definition of estimating sequence defined in Algorithm~\ref{alg:iaihoppaSS} and \eqref{eq:estSeqBkIneq2}, we obtain
    \begin{align*}
        F(x_k) &\leq \min_{x\in\dom\psi} \set{\widehat{\mathcal{L}}_k(x) +\tfrac{1}{2A_k} \|x-x_0\|^2} \leq \min_{x\in\dom\psi} \set{\widehat{\mathcal{L}}_k(x)}+\tfrac{R_0^2}{2A_k}\\ 
        &\leq -\varphi(\widehat{u}_k)+\psi_*(-A^*\widehat{u}_k) +\tfrac{R_0^2}{2A_k},
    \end{align*}
    Together with $\phi^*\leq F^*$, this implies \eqref{eq:FkPhikIneq00}.
\end{proof}
%%%%%%%%%%%

We note that Proposition~\ref{prop:primDualGap} ensures that primal-dual gap has the upper bound $R_0^2/2A_k$. As a result, one can use the inequality $A_k\geq (R_0^2/2\varepsilon)$ as a stopping criterion for Algorithm \ref{alg:iaihoppaSS}.

%%%%%%%%%%%%%%%%%%%%%%%%%%%%%%%%%%%%%%%%%%%%%%%%%%%%%%%%%%%%%%%%%%%%%%%%%%%%%%%%%%
\subsection{Bisection method for auxiliary segment search}\label{sec:compAS}
This section concerns with an adaptation of the bisection scheme in \cite[Section 4]{nesterov2020inexactSS} to handle the segment search of  Algorithm \ref{alg:iaihoppaSS} for composite minimization \eqref{eq:prob}.

\vspace{3mm}
%%%%%%%%%%%%%%%%%%%%%%
\RestyleAlgo{boxruled}
\begin{algorithm}[H]
\DontPrintSemicolon \KwIn{$x_k, u_k\in\dom\psi, i=0, \tau_{k,0}^1=0, \beta_{k,0}^1=\beta_k^1, T_{k,0}^1=x_k^0, \beta_{k,0}^2=\beta_k^2, T_{k,0}^2=x_k^1,\alpha_{k,0}=\tfrac{\beta_{k,0}^2}{\beta_{k,0}^2-\beta_{k,0}^1}$,  $g_{k,0}=\left(\alpha_{k,0}\| \nabla f(x_k^0)+g\|_*^{(p+1)/p}+(1-\alpha_k)\| \nabla f(x_k^1)+\overline{g} \|_*^{(p+1)/p}\right)^{p/(p+1)}$;} 
\Begin{ 
    \While {$\alpha_{k,i}(\tau_{k,i}^1-\tau_{k,i}^2)\beta_{k,i}^1> \tfrac{1}{2} \left(\tfrac{1-\beta}{H}\right)^{1/p} g_{k,i}^{(p+1)/p}$}{ 
        Set $\tau_{k,i}^+=\tfrac{1}{2}(\tau_{k,i}^1+\tau_{k,i}^2)$;\; 
        Compute $(T_{k,i}^+,g^+)\in\A(y_{k,i},\beta)$ for $y_{k,i}=x_k+\tau_{k,i}^+u_k$ and $g^+\in\partial \psi(T_{k,i}^+)$;\;
        Set $\beta_{k,i}^1=\innprod{\nabla f(T_{k,i}^+)+g^+}{u_k}$;\;
        \uIf{$\beta_{k,i}^1\leq 0$}{
            $\tau_{k,i+1}^1=\tau_{k,i}^+, \beta_{k,i}^1=\beta_{k,i+1}^+,~ T_{k,i+1}^1=T_{k,i}^+$ and $\tau_{k,i+1}^2=\tau_{k,i}^2, \beta_{k,i}^2=\beta_{k,i+1}^2,~ T_{k,i+1}^2=T_{k,i}^2$;\;
        }
        \Else{
            $\tau_{k,i+1}^1=\tau_{k,i}^1, \beta_{k,i}^1=\beta_{k,i+1}^1,~ T_{k,i+1}^1=T_{k,i}^1$ and $\tau_{k,i+1}^2=\tau_{k,i}^+, \beta_{k,i}^2=\beta_{k,i+1}^+,~ T_{k,i+1}^2=T_{k,i}^+$;\;
        }
        Set $\alpha_{k,i+1}=\tfrac{\beta_{k,i+1}^2}{\beta_{k,i+1}^2-\beta_{k,i+1}^1}\in[0,1]$, $g_T^1\in \partial \psi(T_{k,i+1}^1)$, $g_T^2\in \partial \psi(T_{k,i+1}^2)$, and   $g_{k,i+1}=\left(\alpha_{k,i+1}\| \nabla f(T_{k,i+1}^1)+g_T^1\|_*^{\tfrac{p+1}{p}}+(1-\alpha_{k,i+1})\| \nabla f(T_{k,i+1}^2)+g_T^2\|_*^{\tfrac{p+1}{p}}\right)^{\tfrac{p}{p+1}}$;\;
        $i=i+1$;\;
     } 
     $i_k=i$;\;
}
\caption{ Bisection Scheme for Auxiliary Segment Search\label{alg:bisectionPrc}}
\end{algorithm}
%%%%%%%%%%%%%%%
\vspace{3mm}

We next provide a bound on the maximum number iterations of Algorithm \ref{alg:bisectionPrc} before the stopping criterion $\alpha_{k,i}(\tau_{k,i}^1-\tau_{k,i}^2)\beta_{k,i}^1\leq \tfrac{1}{2} \left(\tfrac{1-\beta}{H}\right)^{1/p} g_{k,i}^{(p+1)/p}$ is satisfied.

%%%%%%%%%%%%%%%%%%%%%%%%%%%%%%
\begin{thm}[number of bisections in each step]\label{thm:compAlg}
    In Algorithm \ref{alg:bisectionPrc}, let the following hold 
    \begin{equation}\label{eq:minFtki1Ftki2}
        \min\set{F(T_{k,i}^1),F(T_{k,i}^2)}\geq F^*+\varepsilon, \quad i=0,\ldots,i_k, 
    \end{equation}
    for the accuracy parameter $\varepsilon>0$. Then, setting $\beta\in [0,3/(3p+2)]$ and denoting $(\tau)_+=\max\set{\tau,0}$ for $\tau\in\R$, we have
    \begin{equation}\label{eq:ik}
        i_k\leq \left(2+\tfrac{1}{p}\log_2\tfrac{5HD_*}{4(1-\beta)\varepsilon}\right)_+.
    \end{equation}
    %\todo{I get the other way around inequality!}
\end{thm}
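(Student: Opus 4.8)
The plan is to track how the bisection shrinks the $\tau$-bracket $[\tau_{k,i}^1,\tau_{k,i}^2]$ and to show that the stopping test is met as soon as this bracket is small relative to a scale dictated by the regularization. Since the scheme starts from $\tau_{k,0}^1=0$, $\tau_{k,0}^2=1$ and halves the interval at each step while preserving the sign pattern $\beta_{k,i}^1\le 0\le\beta_{k,i}^2$, after $i$ iterations we have $\tau_{k,i}^2-\tau_{k,i}^1=2^{-i}$ and $\alpha_{k,i}=\beta_{k,i}^2/(\beta_{k,i}^2-\beta_{k,i}^1)\in[0,1]$, so the left-hand side of the stopping test equals the nonnegative quantity $\alpha_{k,i}\,2^{-i}\,|\beta_{k,i}^1|$. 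The first fact I would record is the algebraic identity $\alpha_{k,i}|\beta_{k,i}^1|=(1-\alpha_{k,i})\beta_{k,i}^2$, which is immediate from the definition of $\alpha_{k,i}$.

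I would then produce one upper bound on the stopping quantity and one lower bound on its target. Writing $b_1=\|\nabla f(T_{k,i}^1)+\widehat g\|_*$ and $b_2=\|\nabla f(T_{k,i}^2)+\widetilde g\|_*$, Cauchy--Schwarz gives $|\beta_{k,i}^1|\le b_1\|u_k\|$ and $\beta_{k,i}^2\le b_2\|u_k\|$, so the identity above yields $2\alpha_{k,i}|\beta_{k,i}^1|\le\|u_k\|\big(\alpha_{k,i}b_1+(1-\alpha_{k,i})b_2\big)$. For the target $g_{k,i}^{(p+1)/p}=\alpha_{k,i}b_1^{(p+1)/p}+(1-\alpha_{k,i})b_2^{(p+1)/p}$ I would use a uniform lower bound $b_j\ge\underline b$ (established below) together with $b_j^{(p+1)/p}=b_j^{1/p}b_j\ge\underline b^{1/p}b_j$ to get $g_{k,i}^{(p+1)/p}\ge\underline b^{1/p}\big(\alpha_{k,i}b_1+(1-\alpha_{k,i})b_2\big)\ge 2\underline b^{1/p}\,\alpha_{k,i}|\beta_{k,i}^1|/\|u_k\|$. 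Combining the two estimates shows that the stopping test $\alpha_{k,i}2^{-i}|\beta_{k,i}^1|\le\tfrac12\big(\tfrac{1-\beta}{H}\big)^{1/p}g_{k,i}^{(p+1)/p}$ is implied by the single scalar condition $2^{-i}\|u_k\|\le\big(\tfrac{(1-\beta)\underline b}{H}\big)^{1/p}$.

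It then remains to supply $\underline b$ and to convert the scalar condition into a bound on $i$. Here I would invoke the standing non-convergence hypothesis \eqref{eq:minFtki1Ftki2}: for each bracket point $F(T_{k,i}^j)-F^*\ge\varepsilon$, while the subgradient inequality for $F=f+\psi$ gives $F(T_{k,i}^j)-F^*\le\innprod{\nabla f(T_{k,i}^j)+g_j}{T_{k,i}^j-x^*}\le b_j\|T_{k,i}^j-x^*\|$. Since $T_{k,i}^j\in\A(y_{k,i}^j,\beta)$ with $y_{k,i}^j=x_k+\tau_{k,i}^j u_k$ a convex combination of $x_k$ and $\upsilon_k$, Lemma~\ref{lem:AnormIneq} (applicable because $\beta\le 3/(3p+2)\le\tfrac38$ in the relevant range) together with \eqref{eq:normxkx*D*vkx*D*} yields $\|T_{k,i}^j-x^*\|\le\tfrac54\|y_{k,i}^j-x^*\|\le\tfrac54 D_*$, whence $b_j\ge\underline b:=\tfrac{4\varepsilon}{5D_*}$. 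Substituting $\underline b$ and $\|u_k\|\le D_0\le 2D_*$ into the scalar condition and taking base-$2$ logarithms turns it into a lower threshold on $i$ whose dominant term is $\tfrac1p\log_2\tfrac{5HD_*}{4(1-\beta)\varepsilon}$; rounding up to the first integer that satisfies the test and truncating at zero through the $(\cdot)_+$ (which absorbs the case where the test already holds at $i=0$) then gives \eqref{eq:ik}.

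The main obstacle I anticipate is the passage from the power-$1$ average $\alpha_{k,i}b_1+(1-\alpha_{k,i})b_2$ to the power-$\tfrac{p+1}{p}$ average defining $g_{k,i}$: the clean comparison rests entirely on the uniform lower bound $\underline b$, which is precisely what \eqref{eq:minFtki1Ftki2} is designed to furnish, so the logical order (interpolation identities first, then the lower bound on $b_j$, then the logarithmic count) is essential. A secondary point demanding care is the constant bookkeeping: the factor $\|u_k\|$ introduced by Cauchy--Schwarz must be tied back to $D_*$ via \eqref{eq:normxkx*D*vkx*D*} and $D_0\le 2D_*$, and one must check that this contribution fits within the additive constant $2$ displayed in \eqref{eq:ik}; one should also confirm that the sign-preserving update in Algorithm~\ref{alg:bisectionPrc} keeps $\beta_{k,i}^1\le 0\le\beta_{k,i}^2$ (hence $\alpha_{k,i}\in[0,1]$ and the identity $\alpha_{k,i}|\beta_{k,i}^1|=(1-\alpha_{k,i})\beta_{k,i}^2$) throughout.
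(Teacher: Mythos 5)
Your proposal is correct and takes essentially the same approach as the paper's proof: the bisection halving of the bracket, Cauchy--Schwarz to bound $\alpha_{k,i}|\beta_{k,i}^1|$ by $\|u_k\|\le D_0\le 2D_*$ times the subgradient norms, the uniform lower bound $\|\nabla f(T_{k,i}^j)+g_T^j\|_*\ge 4\varepsilon/(5D_*)$ extracted from \eqref{eq:minFtki1Ftki2} via the subgradient inequality, Lemma~\ref{lem:AnormIneq} and \eqref{eq:normxkx*D*vkx*D*}, and a final base-$2$ logarithmic count. Your only deviation is cosmetic---you interpolate through the weighted average $\alpha_{k,i}b_1+(1-\alpha_{k,i})b_2$ using the identity $\alpha_{k,i}|\beta_{k,i}^1|=(1-\alpha_{k,i})\beta_{k,i}^2$, where the paper instead passes through $\min\{b_1,b_2\}$ via $-\alpha_{k,i}\beta_{k,i}^1\le\min\{-\beta_{k,i}^1,\beta_{k,i}^2\}$---which in fact gains a factor of $2$ and, exactly as in the paper's own final step, leaves the same residual $\log_2 D_*$ bookkeeping term that \eqref{eq:ik} silently absorbs.
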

%%%%%%%%%

%%%%%%%%%%%%%
\begin{proof}
    The bisection updates of Algorithm \ref{alg:bisectionPrc} leads to
    \begin{equation}\label{eq:tki1tki2}
        T_{k,i}^2-T_{k,i}^1=2^{-i}, \quad \beta_{k,i}^1 \leq 0\leq \beta_{k,i}^2.
    \end{equation}
    Hence, the Cauchy-Schwartz inequality yields
    \begin{align*}
        -\alpha_{k,i}\beta_{k,i}^1&=\tfrac{-\beta_{k,i}^1\beta_{k,i}^2}{\beta_{k,i}^2-\beta_{k,i}^1} \leq \min\set{-\beta_{k,i}^1,\beta_{k,i}^2}\\
        &\leq \|u_k\| \min\set{\| \nabla f(T_{k,i}^1)+g_T^1\|_*,\| \nabla f(T_{k,i}^2)+g_T^2\|_*}\\
        &\leq D_0\min\set{\| \nabla f(T_{k,i}^1)+g_T^1\|_*,\| \nabla f(T_{k,i}^2)+g_T^2\|_*}\\
        &\leq 2D_*\min\set{\| \nabla f(T_{k,i}^1)+g_T^1\|_*,\| \nabla f(T_{k,i}^2)+g_T^2\|_*},
    \end{align*}
    for $g_T^1\in \partial\psi(T_{k,i}^1)$ and $g_T^2\in \partial\psi(T_{k,i}^2)$. Note that by \eqref{eq:normxkx*D*vkx*D*} and $\tau\in [0,1]$, we get 
    \begin{align*}
        \|x_k+\tau u_k-x^*\|=\|(1-\tau)(x_k-x^*)+\tau(\upsilon-x^*)\|\leq (1-\tau)\|x_k-x^*\|+\tau\|\upsilon-x^*\|\leq D_*.
    \end{align*}
    Since $\beta\in [0,3/(3p+2)]$ for $p\geq 2$ (i.e., $0\leq \beta\leq \tfrac{3}{8}$), it can be deduced by Lemma \ref{lem:AnormIneq} that 
    \begin{align*}
        \|T_{k,i}^1-x^*\|\leq \tfrac{5}{4}D_*, \quad \|T_{k,i}^2-x^*\|\leq \tfrac{5}{4}D_*.
    \end{align*}
    Combining with the definition of $g_{k,i}$, the subgradient inequality, $-\alpha_{k,i}\beta_{k,i}^1\geq 0$, and \eqref{eq:minFtki1Ftki2}, this ensures
    \begin{align*}
        \tfrac{\tfrac{1}{2} \left(\tfrac{1-\beta}{H}\right)^{1/p} g_{k,i}^{(p+1)/p}}{-\alpha_{k,i}\beta_{k,i}^1}
        &\geq \tfrac{\tfrac{1}{2} \left(\tfrac{1-\beta}{H}\right)^{1/p} \min\set{\| \nabla f(T_{k,i}^1)+g_T^1\|_*,\| \nabla f(T_{k,i}^2)+g_T^2\|_*}^{(p+1)/p}}{2D_*\min\set{\| \nabla f(T_{k,i}^1)+g_T^1\|_*,\| \nabla f(T_{k,i}^2)+g_T^2\|_*}}\\
        &\geq \eta \min\set{\innprod{\nabla f(T_{k,i}^1)+g_T^1}{T_{k,i}^1-x^*},\innprod{\nabla f(T_{k,i}^2)+g_T^2}{T_{k,i}^2-x^*}}^{1/p}\\
        &\geq \eta \min\set{F(T_{k,i}^1)-F(x^*),F(T_{k,i}^2)-F(x^*)}^{1/p}\\
        &\geq \eta \varepsilon^{1/p},
    \end{align*}
    where $\eta= \tfrac{1}{4D_*} \left(\tfrac{1-\beta}{H}\right)^{1/p} \left(\tfrac{4}{5D_*}\right)^{1/p}$. Setting $2^{-i_k} \leq \tfrac{1}{4D_*} \left(\tfrac{1-\beta}{H}\right)^{1/p} \left(\tfrac{4}{5D_*}\right)^{1/p} \varepsilon^{1/p}$ leads to our desired result.
\end{proof}
%%%%%%%%%%%

%%%%%%%%%%%%%%%%%%%%%%%%%%%%%%%%%%%%%%%%%%%%%%%%%%%%%%%%%%%%%%%%%%%%%%%%%%%%%%%%%%
\section{Superfast high-order methods with segment search}\label{sec:supFastSS}
This section concerns with the development of high-order methods for solving the composite minimization problem \eqref{eq:prob} in sense of the BiOPT framework given in \cite{ahookhosh2020inexact}. We recall that this framework involves two levels of methodologies. On one hand, at the upper level, we consider the accelerated inexact proximal-point method with a segment search given in Algorithm~\ref{alg:iaihoppaSS} for arbitrary $p$.  On the other hand, corresponding auxiliary problems are solved by a non-Euclidean composite gradient method (see Algorithm~\ref{alg:zk1}), while the segment search is handled by Algorithm \ref{alg:bisectionPrc}. Setting $q=\floor{p/2}$ and $H=\tfrac{6}{(p-1)!} M_{p+1}(f)$, we will show that this algorithm the complexity
\begin{equation}
    \mathcal{O}\left(\left[\tfrac{HD_*^{p+1}}{\varepsilon}\right]^{\tfrac{2}{6q+1}}\ln \tfrac{HD_*^{p+1}}{\varepsilon}\right),\quad p\geq 2
\end{equation} 
if $p$ is even, which is close to the optimal bound $\mathcal{O}(\varepsilon^{-2/(6q+1)})$. Moreover, it attains the superfast complexity
\begin{equation}
    \mathcal{O}\left(\left[\tfrac{HD_*^{p+1}}{\varepsilon}\right]^{\tfrac{1}{3q+1}}\ln \tfrac{HD_*^{p+1}}{\varepsilon}\right),\quad p\geq 2
\end{equation} 
if $p$ is odd, which is better than the optimal bound $\mathcal{O}(\varepsilon^{-2/(6q+1)})$ for the accuracy parameter $\varepsilon>0$.

Let us begin by describing the non-Euclidean composite gradient method for finding an acceptable solution $z_k$ satisfying \eqref{eq:A}, which was introduced and analyzed in \cite{ahookhosh2020inexact}. We here summarize the algorithm and results of \cite{ahookhosh2020inexact} for the sake of self-containedness. Let $\func{\rho}{\E}{\R}$ be a closed, convex, and differentiable function. Then, the \textit{Bregman} distance is a non-symmetric distance function $\func{\beta_\rho}{\E\times\E}{\R}$ given by
\begin{equation}\label{eq:BregmanDist}
    \beta_\rho(x,y)=\rho(y)-\rho(x)-\innprod{\nabla \rho(x)}{y-x}.
\end{equation}
For a convex function $\func{h}{\E}{\R}$, we then say that $h(\cdot)$ is \textit{$L_h$-smooth relative to $\rho(\cdot)$} if there exists a constant $L_h>0$ such that $L_h\rho-h$ is convex, and we call it \textit{$\mu_h$-strongly convex relative to $\rho(\cdot)$} if there exists $\mu_m>0$ such that $h-\mu_h\rho$ is convex; see \cite{bauschke2016descent,lu2018relatively}. The constant $\kappa_h=\mu_h/L_h$ is called the \textit{condition number} of $h(\cdot)$ relative to $\rho(\cdot)$. %These two conditions are characterize in the following lemma.

% %%%%%%%%%%%%%%%%%%%%%%%%%%%%%%%%%%%%%%%%%%%%%%%%%%%
% \begin{lem}\cite[Proposition 1.1]{lu2018relatively}
% \label{lem:sigmaL}% 
% 	~The following assertions are equivalent:
% 	\begin{itemize}
% 	\item[(i)]
% 		\(h\) is \(L_h\)smooth and \(\mu_f\)-strongly convex relative to \(\rho\);
% 	\item[(ii)]\label{thm:LipIneq}%
% 		\(
% 			\mu_h\beta_\rho(x,y)
% 		{}\leq{}
% 			h(x)-h(y)-\innprod{\nabla h(y)}{x-y}
% 		{}\leq{}
% 			L_h\beta_\rho(x,y);
% 		\)
% 	\item[(iii)]
% 		\(
% 			\mu_h\innprod{\nabla \rho(y)-\nabla \rho(x)}{y-x}
% 		{}\leq{}
% 			\innprod{\nabla h(y)-\nabla h(x)}{y-x}
% 		{}\leq{}
% 			L_h\innprod{\nabla \rho(y)-\nabla \rho(x)}{y-x};
% 		\)
% 	\item[(iv)]
% 		\(
% 			\mu_h\nabla^2\rho
% 		{}\preceq{}
% 			\nabla^2 h
% 		{}\preceq{}
% 			L_h\nabla^2\rho
% 		\).
% 	\end{itemize}
% \end{lem}
% %%%%%%%%%

In order to provide an {\it acceptable solution} for the auxiliary problem \eqref{eq:A}, we need to minimize the function $\func{\varphi_k}{\E}{\R}$ given by
\begin{equation}\label{eq:psik}
    \varphi_k(z)=f_{y_k,H}(z)+\psi(z), \quad \forall k\geq 0,~ z\in\dom \psi,
\end{equation}
where the solution should fulfill the inequality in \eqref{eq:A}. To do so, we first consider the function $\func{\rho_{y_k,H}}{\E}{\R}$ given by
\begin{equation}\label{eq:rhoBarxPth}
    \rho_{y_k,H}(x)= \sum_{k=1}^{q} \tfrac{1}{(2k)!} D^{2k} f(y_k)[x-y_k]^{2k}+H d_{p+1}(x-y_k),
\end{equation}
which we denote by $\rho_k(\cdot)$ for sake of simplicity. It is notable from \cite[Theorems~3.5 and 3.6]{ahookhosh2020inexact} that
\begin{description}
    \item[{\bf(i)}] $\rho_k(\cdot)$ is a uniformly convex scaling function of the degree $p+1$ with the modulus $2^{2-p}$, which is $\beta_{\rho_k}(x,y)\geq \tfrac{2^{2-p}}{(p+1)} \|y-x\|^{p+1}$; see \cite[Theorem~3.5]{ahookhosh2020inexact};
    \item[{\bf(ii)}] the function $f_{y_k,H}^p(\cdot)$ is $L$-smooth and $\mu$-strongly convex relative to the scaling function $\rho_k(\cdot)$ for constants 
    \begin{align*}
    \mu=1-\tfrac{1}{\xi}, \quad L=1+\tfrac{1}{\xi}, \quad \kappa = \tfrac{\xi-1}{\xi+1},
\end{align*}
where $\xi$ is the unique solution of the quadratic equation $\xi(1+\xi) = \tfrac{(p-1)!H}{M_{p+1}(f)}$; see \cite[Theorem~3.6]{ahookhosh2020inexact}. In light of this equation, in the remainder of this section, we consider the following parameters
\begin{equation}\label{eq:parSuperfastAlg1}
    \xi=2,\quad H=\tfrac{6}{(p-1)!} M_{p+1}(f), \quad \mu=\tfrac{1}{2}, \quad L=\tfrac{3}{2}, \quad \kappa=\tfrac{1}{3};
\end{equation}

    \item[{\bf(iii)}] $\|\nabla^2 \rho_{y_k,H}(\cdot)\|$ is bounded on the bounded set
\begin{align*}\label{eq:levelSetPsik}
    \mathcal{L}_k(z_0, \Delta) = \set{z\in\E ~:~ \|z_0-z\|\leq \Delta,~\varphi_k(z)\leq \varphi_k(z_0)}
\end{align*}
for some $\Delta>0$ and if $M_{2}(f)<+\infty$, $M_{4}(f)<+\infty$, and $M_{p+1}(f)<+\infty$; cf. \cite[Theorem~3.5]{ahookhosh2020inexact}.
\end{description}

Let us assume that points $y_k,z_i\in\E$ and constant $H>0$ are given. In order to minimize \eqref{eq:psik} inexactly in the sense \eqref{eq:A}, we consider the non-Euclidean proximal scheme
\begin{equation}\label{eq:zk1}
    z_{i+1} = \argmin_{z\in\E}\set{\innprod{\nabla f_{y_k,H}(z_i)}{z-z_i}+\psi(z)+2L_f \br(z_i,z)},
\end{equation}
where $z_k^*$ denotes its optimal solution. It follows from \eqref{eq:rhoBarxPth} that
\begin{align*}
        \beta_{\rho_k}(z_i,z) &= \rho_k(z)-\rho_k(z_i)-\innprod{\nabla \rho_k(z_i)}{z-z_i}\\
        &= \sum_{k=1}^{q} \tfrac{1}{(2k)!} D^{2k} f(y_k)[z-y_k]^{2k}-\sum_{k=1}^{q} \tfrac{1}{(2k)!} D^{2k} f(y_k)[z_i-y_k]^{2k}\\
        &~~~-\innprod{\sum_{k=1}^{q} \tfrac{1}{(2k-1)!} D^{2k} f(y_k)[z_i-y_k]^{2k-1}}{z-z_i}\\
        &~~~+H \left(d_{p+1}(z-y_k)- d_{p+1}(z_i-y_k)-\innprod{\nabla d_{p+1}(z_i-y_k)}{z-z_i}\right).
    \end{align*}
	Hence, we need to solve the auxiliary problem  
\begin{equation*}
	\begin{split}
    z_{i+1} &= \argmin_{z\in\E}\set{\innprod{\nabla f_{y_k,H}(z_i)}{z-z_i}+\psi(z)+2L \br(z_i,z)}\\
    & = \argmin_{z\in\E}\left\{\innprod{\nabla f_{y_k,H}(z_i)-2L\sum_{k=1}^{q} \tfrac{1}{(2k-1)!} D^{2k} f(y_k)[z_i-y_k]^{2k-1}}{z-z_i}+\psi(z) \right.\\
    &\left. ~~~~~~~~~~~~~~~~~~+2L\sum_{k=1}^{q} \tfrac{1}{(2k)!} D^{2k} f(y_k)[z-y_k]^{2k}+2LH \beta_{d_{p+1}}(z_i,z)\right\}
    \end{split}
\end{equation*}
This leads to the minimization problem
\begin{align}\label{eq:zk0}
    \min_{z\in\dom\psi} \set{\innprod{c}{z}+2L\sum_{k=1}^{q} \tfrac{1}{(2k)!} D^{2k} f(y_k)[z]^{2k}+\psi(z)+\tfrac{2LH}{p+1}\|z\|^{p+1}},
\end{align}
where
\begin{align*}
    c=\nabla f_{y_k,H}(z_i)-2L\sum_{k=1}^{q} \tfrac{1}{(2k-1)!} D^{2k} f(y_k)[z_i-y_k]^{2k-1}-2LH \|z_i\|^{p-1} B(z_i-y_k).
\end{align*}
For the given sequence $\{z_i\}_{i\geq 0}$ given by \eqref{eq:zk0}, we will stop the scheme as soon as the inequality 
\begin{align*}
    \|\nabla f_{y_k, H}^p(z_{i+1})+g\|_*\leq \beta \|\nabla f(z_{i+1})+g\|_*
\end{align*}
holds, and then we set $z_k=z_{i+1}$. In the remainder of this section, we show that this inequality holds for large enough internal iterations $i$. 

Summarizing above-mentioned discussion, we come to the following non-Euclidean composite gradient algorithm.

\vspace{2mm}
%%%%%%%%%%%%%%%%%%%%%%
\RestyleAlgo{boxruled}
\begin{algorithm}[H]
\DontPrintSemicolon \KwIn{$z_{0}=y_k\in \dom \psi$,~ $\beta\in [0,3/(3p+2)]$,~ $L>0$,~ $i=0$;} 
\Begin{ 
    \Repeat{$\|\nabla f_{y_k, H}^p(z_i)+g\|_*\leq \beta \|\nabla f(z_i)+g\|_*$}{  
        Compute $z_{i+1}$ by \eqref{eq:zk0};\;
        Set $g=L(\nabla \rho(z_i)-\nabla \rho(z_{i+1}))-\nabla f_{y_k,H}^p(z_i)\in\partial\psi(z_{i+1})$ and $i=i+1$;
     } 
     $i_k^* = i$;\;
}
\KwOut{$z_k=z_{i_k^*}$ and $g=L(\nabla \rho(z_{i_k^*-1})-\nabla \rho(z_{i_k^*}))-\nabla f_{y_k,H}^p(z_{i_k^*-1})\in\partial\psi(z_{i+1})$.}
\caption{ Non-Euclidean Composite Gradient Algorithm\label{alg:zk1}}
\end{algorithm}
%%%%%%%%%%%%%%%
\vspace{2mm}

Let us set $S=\set{x\in\dom \psi ~:~ \|z-x^*\| \leq 2 R_0}$ and assume
\begin{equation*}
    \psi_S = \sup_{z\in S} F(z)<+\infty.
\end{equation*}
Moreover, for the accuracy parameter $\varepsilon>0$, we assume
\begin{equation*}\label{eq:assumPsiPsi*}
    F(z_i)-F(x^*)\geq \varepsilon, \quad \forall  i_k^*\geq i\geq 0,~ \forall k\geq 0,
\end{equation*}
and show that Algorithm \ref{alg:zk1} is well defined. 

%%%%%%%%%%%%%%%%%%%%%%%%%%%%%%%%%%%%%%%%%%%%%%%%%%%%%%%%
\begin{thm}[well-definedness of Algorithm \ref{alg:zk1}]\cite[Theorem~3.5 and Corollary~3.6]{ahookhosh2020inexact}
\label{thm:wellDefinedAlg3}
Let Algorithm~\ref{alg:iaihoppaSS} be applied to the problem \eqref{eq:prob}, where its auxiliary proximal-point problem \eqref{eq:prox00} is minimized inexactly by Algorithm~\ref{alg:zk1}.
Let $\beta\in [0,3/(3p+2)]$, $z_0=y_k$, and $\{z_i\}_{i\geq 0}$ be a sequence generated by Algorithm~\ref{alg:zk1}, and let
    \begin{equation}\label{eq:Fzix}
        F(z_i)-F(x^*)\geq \varepsilon, \quad \forall  i\geq 0,
    \end{equation}
    where $x^*$ is a minimizer of $F$ and $\varepsilon>0$ is the accuracy parameter. Moreover, assume that there exists a constant $D>0$ such that $\|z_i-x^*\|\leq D$ for all $i\geq 0$.
    Then, for the subgradients 
    \begin{align*}
        \mathcal{G}_{i_k^*} = \nabla f_{y_k,H}^p(z_{i_k^*})+g\in \partial \varphi_k (z_{i_k^*}),\quad g=L(\nabla \rho(z_{i_k^*-1})-\nabla \rho(z_{i_k^*}))-\nabla f_{y_k,H}^p(z_{i_k^*-1}) \in\partial\psi(z_{i_k^*}),
    \end{align*}
    and $z_{i_k^*}\in\dom \psi$, the maximum number of iterations $i_k^*$ needed to guarantee the inequality
    \begin{equation}\label{eq:AIneqNEPA2}
        \|\mathcal{G}_{i_k^*}\|_* \leq \beta \|\nabla f(z_{i_k^*})+g\|_*
    \end{equation}
    satisfies 
    \begin{equation}\label{eq:lowerBoundi1}
        i_k^*\leq 1+ \tfrac{2(p+1)}{\kappa}\log\left(\tfrac{\tfrac{D}{\beta} \left(\tfrac{2L}{C}\beta_{\rho}(z_0,z_k^*)\right)^{1/(p+1)}}{ \varepsilon} \right),
    \end{equation}
    where $C=\tfrac{L2^{2-p}}{(p+1)(L-\mu)^{p+1} \overline L^{p+1}}$ and $\varepsilon>0$ is the accuracy parameter.
\end{thm}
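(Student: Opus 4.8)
The plan is to show that the inner stopping test \eqref{eq:AIneqNEPA2} must be triggered after the stated number of iterations, by controlling the numerator $\|\mathcal{G}_i\|_*$ and the denominator $\|\nabla f(z_i)+g\|_*$ separately and then comparing them. Since the test reads $\|\mathcal{G}_i\|_*\leq\beta\|\nabla f(z_i)+g\|_*$, it suffices to produce an upper bound on $\|\mathcal{G}_i\|_*$ that decays geometrically in $i$ together with a fixed positive lower bound on $\|\nabla f(z_i)+g\|_*$; the first index $i$ at which the former drops below $\beta$ times the latter then yields \eqref{eq:lowerBoundi1}, and finiteness of that index is exactly the asserted well-definedness.

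First I would establish the lower bound on the denominator. Applying the subgradient inequality for $F=f+\psi$ at $z_i$ with the subgradient $\nabla f(z_i)+g$, and invoking the standing hypothesis \eqref{eq:Fzix} that $F(z_i)-F(x^*)\geq\varepsilon$, gives $\innprod{\nabla f(z_i)+g}{z_i-x^*}\geq F(z_i)-F(x^*)\geq\varepsilon$. Combining Cauchy--Schwarz with the boundedness assumption $\|z_i-x^*\|\leq D$ then produces $\|\nabla f(z_i)+g\|_*\geq\varepsilon/D$, a constant lower bound independent of $i$. Consequently it is enough to drive $\|\mathcal{G}_i\|_*$ below $\beta\varepsilon/D$.

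Next I would obtain the geometric upper bound on $\|\mathcal{G}_i\|_*$. Because $f_{y_k,H}^p$ is $L$-smooth and $\mu$-strongly convex relative to $\rho_k$ with condition number $\kappa=\mu/L$ (item (ii) and \eqref{eq:parSuperfastAlg1}), the non-Euclidean composite gradient scheme \eqref{eq:zk1} with step $2L$ contracts the residual linearly, $\varphi_k(z_i)-\varphi_k(z_k^*)\leq(1-\tfrac{\kappa}{2})^i\,(\varphi_k(z_0)-\varphi_k(z_k^*))$, while relative smoothness bounds the initial residual by $\varphi_k(z_0)-\varphi_k(z_k^*)\leq 2L\,\br(z_0,z_k^*)$. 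The delicate step — and the one I expect to be the main obstacle — is to convert the decay of the residual into a bound on the dual norm of the subgradient $\mathcal{G}_i\in\partial\varphi_k(z_i)$ selected through the optimality conditions of \eqref{eq:zk1}. Here the degree-$(p+1)$ uniform convexity of $\rho_k$ (item (i), modulus $2^{2-p}$) is essential: writing $\mathcal{G}_i$ as the sum of a relative-smoothness increment $\nabla f_{y_k,H}^p(z_i)-\nabla f_{y_k,H}^p(z_{i-1})$ and the proximal term $2L(\nabla\rho_k(z_{i-1})-\nabla\rho_k(z_i))$, and playing the per-step decrease of $\varphi_k$ against the uniform-convexity lower bound (with $\overline{L}$ bounding $\|\nabla^2\rho_k\|$ on the relevant level set, item (iii)), one reaches an inequality of the form $C\|\mathcal{G}_i\|_*^{p+1}\leq\varphi_k(z_{i-1})-\varphi_k(z_k^*)$ with $C=\tfrac{L2^{2-p}}{(p+1)(L-\mu)^{p+1}\overline{L}^{p+1}}$. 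The exponent $p+1$ is precisely what generates the $\tfrac{1}{p+1}$-power appearing inside the logarithm of \eqref{eq:lowerBoundi1}.

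Finally I would combine the two estimates. Chaining the subgradient-to-residual inequality with the linear contraction and the initial-residual bound gives $\|\mathcal{G}_i\|_*\leq\bigl(\tfrac{2L}{C}\br(z_0,z_k^*)\bigr)^{1/(p+1)}(1-\tfrac{\kappa}{2})^{(i-1)/(p+1)}$. Imposing $\|\mathcal{G}_i\|_*\leq\beta\varepsilon/D$, taking logarithms, and using $-\log(1-\tfrac{\kappa}{2})\geq\tfrac{\kappa}{2}$ to lower-bound the contraction exponent isolates $i$ and yields $i-1\geq\tfrac{2(p+1)}{\kappa}\log\bigl(\tfrac{(D/\beta)(\tfrac{2L}{C}\br(z_0,z_k^*))^{1/(p+1)}}{\varepsilon}\bigr)$, which is exactly \eqref{eq:lowerBoundi1}; the additive constant $1$ absorbs the index shift from $z_{i-1}$ to $z_i$. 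Since every quantity entering the bound is finite under the stated hypotheses, this simultaneously shows that \eqref{eq:AIneqNEPA2} is reached in finitely many steps, establishing that Algorithm~\ref{alg:zk1} is well defined.
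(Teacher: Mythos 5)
The paper gives no proof of this theorem at all — it is imported verbatim, with its constants, from the predecessor paper \cite[Theorem~3.5, Corollary~3.6]{ahookhosh2020inexact} — and your reconstruction follows precisely the argument underlying that cited result: the lower bound $\|\nabla f(z_i)+g\|_*\geq \varepsilon/D$ from \eqref{eq:Fzix}, Cauchy--Schwarz and $\|z_i-x^*\|\leq D$; linear contraction of $\varphi_k(z_i)-\varphi_k(z_k^*)$ at rate $1-\kappa/2$ under relative smoothness/strong convexity; conversion of the residual to a subgradient-norm bound via the degree-$(p+1)$ uniform convexity of $\rho_k$, which is exactly what produces the constant $C=\tfrac{L2^{2-p}}{(p+1)(L-\mu)^{p+1}\overline{L}^{p+1}}$ and the $1/(p+1)$ power inside the logarithm; and the final logarithmic inversion using $-\log(1-\kappa/2)\geq\kappa/2$. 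Your proposal is correct and takes essentially the same route, so there is nothing to flag beyond noting that the comparison here is necessarily against the cited proof rather than one printed in this paper.
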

%%%%%%%%%

We note that the auxiliary problem \eqref{eq:prox00} can be solved inexactly by applying one iteration of the tensor method (e.g., \cite{nesterov2019implementable}) such that the inequality given in \eqref{eq:A} is satisfied. For detailed discussion, we refer the readers to Section~2.1 of \cite{ahookhosh2020inexact}.

We next present our superfast high-order method by combining all above facts with Algorithm~\ref{alg:iaihoppaSS} leading to the following algorithm.

%%%%%%%%%%%%%%%%%%%%%%
\RestyleAlgo{boxruled}
\begin{algorithm}[ht!]
\DontPrintSemicolon \KwIn{$x_{0}\in\dom\psi$,~ $\upsilon_0=x_0$,~ $\beta\in [0,3/(3p+2)]$,~ $H=\tfrac{6}{(p-1)!}M_{p+1}(f)$,~$A_0=0$,~ $\Psi_0=\tfrac{1}{2}\|x-x_0\|^2$,~ $k=0$;} 
\Begin{ 
    \While {stopping criterion does not hold}{ 
        Set $u_k=\upsilon_k-x_k$ and $z_0=y_k=x_k$;\;  
        Find an acceptable solution $x_k^0=z_{i_k^*}$ of \eqref{eq:prox00} and $g\in\partial \psi(x_k^0)$ by Algorithm \ref{alg:zk1} such that $(x_k^0,g)\in\A(y_0,\beta)$;\;
        \uIf{$\innprod{\nabla f(x_k^0)+g}{u_k}\geq 0$}{
            $\eL_k(x)=\ell_{x_k^0}(x)+\psi(x)$,
            $x_{k+1}=x_k^0,~ g_k=\| \nabla f(x_k^0)+g\|_*$;\;
        }
        \Else{
            Set $z_0=y_k=\upsilon_k$ and find an acceptable solution $x_k^1=z_{i_k^*}$ of \eqref{eq:prox00} and $\overline{g}\in\partial\psi(x_k^1)$ by Algorithm \ref{alg:zk1} such that $(x_k^1,\overline{g})\in\A(y_k,\beta)$;\;
            \uIf{$\innprod{\nabla f(x_k^1)+\overline{g} }{u_k}\leq 0$}{
                $\eL_k(x)=\ell_{x_k^1}(x)+\psi(x)$,
                $x_{k+1}=x_k^1,~ g_k=\| \nabla f(x_k^1)+\overline{g} \|_*$ for $\overline{g} \in\partial \psi(x_k^1)$;\;
            }
            \Else{
                Apply Algorithm \ref{alg:bisectionPrc} to find $0\leq\tau_k^1\leq\tau_k^2\leq 1$, $y_k^1=x_k+\tau_k^1u_k$, $(T_k^1,\widehat{g})\in\A(y_k^1,\beta)$ for $\widehat{g}\in\partial \psi(T_k^1)$, $y_k^2=x_k+\tau_k^2u_k$, $(T_k^2,\widetilde{g})\in\A(y_k^2,\beta)$ for $\widetilde{g}\in\partial \psi(T_k^2)$ such that
                \begin{align*}\label{eq:else4}
                    \beta_k^1\leq 0 \leq \beta_k^2,~ \alpha_k(\tau_k^1-\tau_k^2)\beta_k^1\leq \tfrac{1}{2} \left(\tfrac{1-\beta}{H}\right)^{1/p} g_k^{(p+1)/p},~ \alpha_k = \tfrac{\beta_k^2}{\beta_k^2-\beta_k^1}\in [0,1]
                \end{align*}
                with $\beta_k^1=\innprod{\nabla f(T_k^1)+\widehat{g}}{u_k}$, $\beta_k^2=\innprod{\nabla f(T_k^2)+\widetilde{g}}{u_k}$, 
                \begin{align*}
                    g_k=\left(\alpha_k\| \nabla f(T_k^1)+\widehat{g}\|_*^{(p+1)/p}+(1-\alpha_k)\| \nabla f(T_k^2)+\widetilde{g}\|_*^{(p+1)/p}\right)^{p/(p+1)};
                \end{align*}
                Set $\eL_k(x)=\alpha_k \ell_{T_k^1}(x) +(1-\alpha_k) \ell_{T_k^2}(x)+\psi(x)$ and $x_{k+1}=\alpha_k T_k^1+(1-\alpha_k)T_k^2$;\; 
            }
        }
        Compute $a_{k+1}$ by solving  $\tfrac{a_{k+1}^2}{A_{k+1}+a_{k+1}}=\tfrac{1}{2}\left(\tfrac{1-\beta}{H}\right)^{1/p}g_k^{(1-p)/p}$, and $A_{k+1}=A_k+a_{k+1}$;\;
        Set $\Psi_{k+1}(x)=\Psi_k(x)+a_{k+1}\eL_k(x)$ and compute $\upsilon_{k+1}=\argmin_{x\in\dom\psi} \Psi_{k+1}(x)$;\;
     } 
}
\caption{Superfast High-Order Segment Search Algorithm\label{alg:superFSOA}}
\end{algorithm}
%%%%%%%%%%%%%%%
\vspace{3mm}

Let us define the {\it norm-dominated scaling function} to upper bound the Bregman term $\beta_{\rho_k}(\cdot,\cdot)$.

%%%%%%%%%%%%%%%%%%%%%%%%%%%%%%%%%%%%%%%
\begin{defin}\cite[Definition 2]{nesterov2020superfast}
\label{def:normDominat}
    The scaling function $\rho(\cdot)$ is called \emph{norm-dominated} on the set $S\subseteq \E$ by some function $\func{\theta_S}{\R_+}{\R_+}$ if $\theta_S(\cdot)$ is convex with $\theta_S(0)=0$ such that
    \begin{equation}\label{eq:normDominat}
        \br(x,y) \leq \theta_S(\|x-y\|),
    \end{equation}
    for all $x\in S$ and $y\in\E$.
\end{defin}
%%%%%%%%%%%

%%%%%%%%%%%%%%%%%%%%%%%%%%%%%%%%%%
\begin{lem}[norm-dominatedness of the scaling function $\rho_k(\cdot)$]\cite[Lemma~3.9 and Lemma~3.10]{ahookhosh2020inexact}\label{lem:rhokGradDomin}
    Let $p\geq 2$ and $q=\floor{p/2}$. Then, the scaling function $\rho_k(\cdot)$ is norm-dominated on the Euclidean ball $B(y_k,D_1)=\set{x\in\E\mid \|x-y_k\|\leq D_1}$ by
    \begin{equation}\label{eq:rhokGradDomin}
        \theta_B(\tau) = \tfrac{\widehat{L}}{2} \tau^2+\left\{
        \begin{array}{ll}
           \alpha_1 a_1 \tau^{2q+2}+ \beta_1 d_1  &~~ \mathrm{if}~ p=2q+1,\vspace{2mm}\\
           \alpha_2 a_2 \tau^{2q+1}+ \beta_2 d_2  &~~ \mathrm{if}~ p=2q,
        \end{array}
        \right.
    \end{equation}
     where $\widehat{L}=\sum_{k=1}^{q} \tfrac{4(1-(2D_1^2)^{2k-1})\sum_{i=1}^{2k-1}\binom{2k-1}{i}}{(1-2D_1^2)(2k-1)!}$ and
    \begin{equation*}
        \begin{array}{l}
        1<\alpha_1 \leq 1+\tfrac{1}{2a_1}(b_1+1)c_1^{-\tfrac{q+1}{q}}, \quad 1+\tfrac{1}{2d_1} (b_1+1) c_1^{\tfrac{q+1}{q}}\leq \beta_1,\\
        1<\alpha_2 \leq 1+\tfrac{b_2+1}{2a_2}c_2^{-\tfrac{2q+1}{2(2q-1)}}, \quad 1+\tfrac{b_2+1}{2d_2} c_2^{\tfrac{2q+1}{2(2q-1)}}\leq \beta_2,
        \end{array}
    \end{equation*}
    with 
    \begin{equation*}
        \begin{array}{l}
         a_1=\tfrac{2^{2q}}{p+1}, \quad b_1=\tfrac{2^{3q+1}}{p+1} R^{q+1},\quad c_1=R^{p}
, \quad d_1=\tfrac{2^q}{p+1} R^{2q+2},\\
         a_2=2^{2q-1}, \quad b_2=\tfrac{2^{\tfrac{6q-1}{2}}}{p+1} R^{\tfrac{2q+1}{2}},\quad c_2=R^{p}, \quad d_2=\tfrac{2^{\tfrac{2q-1}{2}}}{p+1} R^{2q+1},
        \end{array}
    \end{equation*}
    for $\tau\geq 0$.
\end{lem}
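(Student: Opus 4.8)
The plan is to exploit the additive structure of $\rho_k$ together with the linearity of the Bregman distance in its generating function. Writing $\rho_k=\phi_k+H\,d_{p+1}(\cdot-y_k)$ with $\phi_k(x)=\sum_{j=1}^{q}\tfrac{1}{(2j)!}D^{2j}f(y_k)[x-y_k]^{2j}$, the definition \eqref{eq:BregmanDist} gives $\beta_{\rho_k}(x,y)=\beta_{\phi_k}(x,y)+H\,\beta_{d_{p+1}}(x-y_k,y-y_k)$, the second term using the translation invariance of the Bregman distance of $d_{p+1}$. Setting $u=x-y_k$ (so $\|u\|\le D_1$ because $x\in B(y_k,D_1)$), $w=y-x$, and $\tau=\|w\|$, the task reduces to bounding $\beta_{\phi_k}+H\beta_{d_{p+1}}$ by a function of $\tau$ alone, uniformly over $y\in\E$, i.e.\ for every $\tau\ge 0$, and then forcing this bound into the three-term template of \eqref{eq:rhokGradDomin} by interpolation.

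For the power part I would use the integral form of the Bregman distance with the Hessian estimate from \eqref{eq:dp1Der}, namely $\nabla^2 d_{p+1}(z)\preceq p\|z\|^{p-1}B$, to obtain
\begin{equation*}
    H\,\beta_{d_{p+1}}(u,u+w)=H\!\int_0^1(1-t)\,\innprod{\nabla^2 d_{p+1}(u+tw)\,w}{w}\,dt\le \tfrac{pH}{2}\,\tau^2\,(D_1+\tau)^{p-1},
\end{equation*}
using $\|u+tw\|\le D_1+\tau$. Expanding $(D_1+\tau)^{p-1}$ by the binomial theorem yields monomials $\tau^{2+i}$, $0\le i\le p-1$, whose extreme terms are $\tau^2$ and $\tau^{p+1}$. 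Each intermediate power with $2<2+i<p+1$ is then removed by Young's inequality in the form $\tau^s\le \epsilon\,\tau^{p+1}+C(\epsilon)$, valid for all $\tau\ge0$ precisely because $s<p+1$; this recasts the contribution as a multiple of $\tau^{p+1}$ plus a multiple of $\tau^2$ plus a constant. These are exactly the coefficients $\alpha_i a_i$ (leading term), part of $\tfrac{\widehat L}{2}$ (quadratic term), and $\beta_i d_i$ (constant) of \eqref{eq:rhokGradDomin}, with the parity of $p$ selecting the top exponent $p+1\in\{2q+1,2q+2\}$ and the admissible ranges for $\alpha_i,\beta_i$ being dictated by the Young constants recorded in the statement.

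For the polynomial part I would again write
\begin{equation*}
    \beta_{\phi_k}(x,y)=\int_0^1(1-t)\sum_{j=1}^{q}\tfrac{1}{(2j-2)!}\,D^{2j}f(y_k)[u+tw]^{2j-2}[w]^2\,dt,
\end{equation*}
and bound each summand in absolute value by $\tfrac{1}{(2j-2)!}\|D^{2j}f(y_k)\|\,\|u+tw\|^{2j-2}\tau^2$ via the norm \eqref{eq:dpfhiNorm}. Expanding $\|u+tw\|^{2j-2}$ through $\|u+tw\|^2\le 2\|u\|^2+2\|w\|^2\le 2D_1^2+2\tau^2$ and collecting the resulting cross terms is where the binomial sums $\sum_i\binom{2j-1}{i}$ and the geometric factor $\tfrac{1-(2D_1^2)^{2j-1}}{1-2D_1^2}$ defining $\widehat L$ originate; the genuinely higher-order pieces (all of degree $\le 2q\le p$) are again absorbed into the $\tau^{p+1}$ term and the constant by Young's inequality, leaving a net quadratic bound $\tfrac{\widehat L}{2}\tau^2$. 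Summing the two contributions produces $\theta_B$, which is convex as a nonnegative combination of $\tau^2$, $\tau^{p+1}$, and a constant.

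The main obstacle I anticipate is the bookkeeping of constants rather than any single inequality. One must propagate the binomial and geometric factors through the expansions of $(D_1+\tau)^{p-1}$ and $\|u+tw\|^{2j-2}$, and---more delicately---choose the Young splitting parameters so that every intermediate power is absorbed while the surviving coefficients fall inside the admissible intervals prescribed for $\alpha_1,\beta_1$ (resp.\ $\alpha_2,\beta_2$) and the dominating inequality \eqref{eq:normDominat} holds for all $\tau\ge0$, not merely on the ball. A secondary point to settle is that the interpolation leaves the additive constant $\beta_i d_i$, so strictly $\theta_B(0)\neq0$; one therefore verifies that it is the domination inequality \eqref{eq:normDominat} of Definition~\ref{def:normDominat} that is subsequently invoked. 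Throughout, the odd case $p=2q+1$ and the even case $p=2q$ must be carried separately, since the parity fixes whether the top power is $\tau^{2q+2}$ or $\tau^{2q+1}$ and thereby the exponents appearing in $a_i,b_i,c_i,d_i$.
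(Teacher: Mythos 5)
The first thing to note is that this paper never proves Lemma~\ref{lem:rhokGradDomin}: the statement, with all of its constants, is imported verbatim from \cite[Lemmas~3.9--3.10]{ahookhosh2020inexact}, so the only meaningful comparison is with that cited argument. Your coarse template --- split $\beta_{\rho_k}$ into the even-degree polynomial part plus $H\beta_{d_{p+1}}$, bound each on the ball, and absorb intermediate powers of $\tau$ so that only $\tau^2$, the top power, and a constant survive --- is the right spirit, and your individual inequalities (the bound $\nabla^2 d_{p+1}(z)\preceq p\|z\|^{p-1}B$, the integral form of the Bregman distance) are correct. Nevertheless the plan has a genuine gap: it cannot terminate in the specific $\theta_B$ of \eqref{eq:rhokGradDomin}, and the discrepancy is structural, not the ``bookkeeping of constants'' you anticipate.

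Concretely: (i) your power-part bound $\tfrac{pH}{2}\tau^2(D_1+\tau)^{p-1}$ generates only integer powers $\tau^2,\dots,\tau^{p+1}$, yet in the even case $p=2q$ the stated admissible intervals for $\alpha_2,\beta_2$ are precisely an AM--GM split of a single cross term $(b_2+1)\tau^{(2q+1)/2}$ into $\tau^{2q+1}$ plus a constant, with weights $c_2^{\pm(2q+1)/(2(2q-1))}$, and $b_2\propto R^{(2q+1)/2}$. A half-integer power can never arise from your binomial expansion; it arises from bounding $\|u+w\|^{p+1}=\left(\|u+w\|^2\right)^{(p+1)/2}\le\left(2\|u\|^2+2\|w\|^2\right)^{(p+1)/2}$, i.e.\ the cited proof expands powers of \emph{squared} norms, which is a genuinely different decomposition (the odd case, with its single cross term $\tau^{q+1}=\tau^{(p+1)/2}$, has the same signature). (ii) Every bound you derive scales linearly in $H$ (power part) and in the derivative norms $M_{2j}(f)$ (polynomial part), whereas the target constants $\widehat L$, $a_i,b_i,c_i,d_i$ contain neither; moreover they involve a radius $R$ that is not even defined in this paper and never enters your argument, and your expansion $\|u+tw\|^{2j-2}\le(2D_1^2+2\tau^2)^{j-1}$ yields binomials $\binom{j-1}{i}$ and denominators $(2j-2)!$, not the $\binom{2k-1}{i}$, $(2k-1)!$, and geometric factor appearing in $\widehat L$. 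So your claim that your expansion produces ``exactly the coefficients'' of the statement is false; recovering them requires normalizations and definitions living in the predecessor paper that your argument never identifies. What your route would prove is that $\rho_k$ is norm-dominated by \emph{some} convex function of $\tau$, which is weaker than the lemma as stated. Your observation that $\theta_B(0)=\beta_id_i>0$ sits in tension with Definition~\ref{def:normDominat} is correct and worth keeping, but flagging it does not repair the quantitative mismatch above.
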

%%%%%%%%%

We conclude this section by providing the complexities of Algorithm \ref{alg:superFSOA} in both upper and lower levels, which is the main result of this section.

%%%%%%%%%%%%%%%%%%%%%%%%%%%%%%%%%%%%%%%%%%%%%%%%%%%%%%%%%%%%%%%%
\begin{thm}[complexity of Algorithm \ref{alg:superFSOA}]
\label{thm:CompSupFP1OA}
    Let us assume that all conditions of Theorem~\ref{thm:wellDefinedAlg3} hold, and let $p\geq 2$, $q=\floor{p/2}$, and $\beta\in [0,3/(3p+2)]$. Then, Algorithm \ref{alg:superFSOA} attains an $\varepsilon$-solution of the problem \eqref{eq:prob} in
    \begin{equation}\label{eq:complexitySFSOA}
       \tfrac{1-p}{2}+\tfrac{p+1}{2} \left(\tfrac{6 (4^p) (3p+2) M_{p+1}(f)R_0^{p+1}}{(3p-1)(p-1)! \varepsilon}\right)^{\tfrac{2}{3p+1}}
    \end{equation}
    iterations, for the accuracy parameter $\varepsilon>0$. Moreover, the auxiliary problem \eqref{eq:zk0} is approximately solved by Algorithm \ref{alg:zk1} in at most
    \begin{equation}\label{eq:ikStarUp}
        i_k^*\leq 1+ \tfrac{2(p+1)}{\kappa}\log\left(\tfrac{\tfrac{D(L-\mu)\overline{L}}{\beta} \left((p+1)2^p \theta_B(D_1)\right)^{1/(p+1)}}{ \varepsilon} \right),
    \end{equation}
    iterations. 
\end{thm}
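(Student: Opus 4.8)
The plan is to prove the two displayed bounds separately, feeding each from results already established. I regard Algorithm~\ref{alg:superFSOA} as nothing but a concrete instance of Algorithm~\ref{alg:iaihoppaSS} in which every acceptable solution is generated by Algorithm~\ref{alg:zk1} and the segment search is executed by Algorithm~\ref{alg:bisectionPrc}; hence the outer convergence estimate of Theorem~\ref{thm:convRateAlg6} applies and furnishes the iteration count \eqref{eq:complexitySFSOA}, while the inner count \eqref{eq:ikStarUp} is obtained from Theorem~\ref{thm:wellDefinedAlg3} after specializing the constants and majorizing the Bregman term via Lemma~\ref{lem:rhokGradDomin}.

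For the first bound, I would start from the rate \eqref{eq:estSeqBkIneqAlg6}, insert the parameter choices \eqref{eq:parSuperfastAlg1}, namely $H=\tfrac{6}{(p-1)!}M_{p+1}(f)$, and use that $\beta\in[0,3/(3p+2)]$ gives $\tfrac{1}{1-\beta}\leq \tfrac{3p+2}{3p-1}$. This turns the leading constant into
\begin{equation*}
    \tfrac{4^p H R_0^{p+1}}{1-\beta}\leq \tfrac{6(4^p)(3p+2)M_{p+1}(f)R_0^{p+1}}{(3p-1)(p-1)!}.
\end{equation*}
Requiring the right-hand side of \eqref{eq:estSeqBkIneqAlg6} to be at most $\varepsilon$, i.e. $\left(1+\tfrac{2(k-1)}{p+1}\right)^{(3p+1)/2}\geq \varepsilon^{-1}\cdot(\text{this constant})$, and isolating $k$ gives $k\geq \tfrac{1-p}{2}+\tfrac{p+1}{2}\bigl(\text{constant}/\varepsilon\bigr)^{2/(3p+1)}$, which is exactly \eqref{eq:complexitySFSOA}; the additive term $\tfrac{1-p}{2}$ emerges from solving for $k$ inside $1+\tfrac{2(k-1)}{p+1}$.

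For the second bound, I would invoke \eqref{eq:lowerBoundi1} of Theorem~\ref{thm:wellDefinedAlg3}, which reads
\begin{equation*}
    i_k^*\leq 1+ \tfrac{2(p+1)}{\kappa}\log\left(\tfrac{\tfrac{D}{\beta} \left(\tfrac{2L}{C}\beta_{\rho}(z_0,z_k^*)\right)^{1/(p+1)}}{\varepsilon}\right),\quad C=\tfrac{L2^{2-p}}{(p+1)(L-\mu)^{p+1}\overline{L}^{p+1}}.
\end{equation*}
Substituting $C$ yields $\tfrac{2L}{C}=2^{p-1}(p+1)(L-\mu)^{p+1}\overline{L}^{p+1}$, and the crude estimate $2^{p-1}\leq 2^p$ promotes the exponent so that $\tfrac{2L}{C}\leq 2^{p}(p+1)(L-\mu)^{p+1}\overline{L}^{p+1}$. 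Since $z_0=y_k$, the Bregman distance is controlled by the norm-dominated majorant of Lemma~\ref{lem:rhokGradDomin}, giving $\beta_{\rho_k}(z_0,z_k^*)\leq \theta_B(\|z_0-z_k^*\|)\leq \theta_B(D_1)$. Pulling the factor $(L-\mu)\overline{L}$ out of the $(p+1)$-th root then collapses the argument of the logarithm into $\tfrac{D(L-\mu)\overline{L}}{\beta}\bigl((p+1)2^p\theta_B(D_1)\bigr)^{1/(p+1)}/\varepsilon$, which is precisely \eqref{eq:ikStarUp}.

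The main obstacle is not the algebra but certifying that the hypotheses of the imported results hold along the entire run. Specifically I must guarantee (a) that the standing accuracy condition $F(z_i)-F^*\geq \varepsilon$ and the boundedness $\|z_i-x^*\|\leq D$ required by Theorem~\ref{thm:wellDefinedAlg3} persist for every outer index $k$, which relies on the level-set bounds \eqref{eq:normxkx*D*vkx*D*} of Lemma~\ref{lem:estSeqBkIneq} together with $D_0\leq 2D_*$; and (b) that the inner iterates remain inside the ball $B(y_k,D_1)$ on which the norm-domination of Lemma~\ref{lem:rhokGradDomin} is valid, so that replacing $\beta_{\rho_k}(z_0,z_k^*)$ by $\theta_B(D_1)$ is legitimate. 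Once these are secured, both estimates reduce to the routine rearrangements sketched above.
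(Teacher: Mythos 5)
Your proposal is correct and follows essentially the same route as the paper: the outer bound comes from Theorem~\ref{thm:convRateAlg6} with $H=\tfrac{6}{(p-1)!}M_{p+1}(f)$ and $\tfrac{1}{1-\beta}\leq\tfrac{3p+2}{3p-1}$, solved for $k$, and the inner bound comes from \eqref{eq:lowerBoundi1} with the Bregman term majorized by $\theta_B(D_1)$ via Lemma~\ref{lem:rhokGradDomin}. Your handling of the constant $C$ (using $2^{p-1}\leq 2^p$ to reconcile the $2^{2-p}$ in the statement of Theorem~\ref{thm:wellDefinedAlg3} with the $2^p$ appearing in \eqref{eq:ikStarUp}) is in fact cleaner than the paper's proof, which silently switches to $C=\tfrac{L2^{1-p}}{(p+1)(L-\mu)^{p+1}\overline{L}^{p+1}}$; your conditions (a) and (b) are covered by the theorem's blanket assumption that all conditions of Theorem~\ref{thm:wellDefinedAlg3} hold, so no further work is needed there.
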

%%%%%%%%%

%%%%%%%%%%%%%
\begin{proof}
    From $H=\tfrac{6}{(p-1)!}M_{p+1}(f)$ and Theorem \ref{thm:convRateAlg6}, we obtain
    \begin{align*}
        F(x_k)-F^* \leq \tfrac{4^p H R_0^{p+1}}{1-\beta} \left(1+\tfrac{2(k-1)}{p+1}\right)^{-\tfrac{3p+1}{2}}
        \leq \tfrac{6(4^p)(3p+2) M_{p+1}(f) R_0^{p+1}}{(3p-1)(p-1)!} \left(1+\tfrac{2(k-1)}{p+1}\right)^{-\tfrac{3p+1}{2}},
    \end{align*}
    which consequently leads to the complexity \eqref{eq:complexitySFSOA} of Algorithm~\ref{alg:superFSOA}.
    
    For all $x\in B(y_k,D_1)$, the definition of $\theta_B(\cdot)$ in \eqref{eq:rhokGradDomin} implies that $\theta_B(\|z_k^*-y_k\|) \leq \theta_B(D_1)$.
    Accordingly, it follows from $\beta\in [0,3/(3p+2)]$, $C=\tfrac{L2^{1-p}}{(p+1)(L-\mu)^{p+1} \overline L^{p+1}}$, and \eqref{eq:lowerBoundi1} that
    \begin{align*}
        i_k^*&\leq 1+ \tfrac{2(p+1)}{\kappa}\log\left(\tfrac{\tfrac{D}{\beta} \left(\tfrac{2L}{C}\beta_{\rho}(z_0,z_k^*)\right)^{1/(p+1)}}{ \varepsilon} \right)\\
        &\leq 1+ \tfrac{2(p+1)}{\kappa}\log\left(\tfrac{\tfrac{D}{\beta} \left(\tfrac{2L}{C}\theta_B(\|y_k-z_k^*\|)\right)^{1/(p+1)}}{ \varepsilon} \right)\\
        &\leq 1+ \tfrac{2(p+1)}{\kappa}\log\left(\tfrac{\tfrac{D}{\beta} \left(\tfrac{2L}{C}\theta_B(D_1)\right)^{1/(p+1)}}{ \varepsilon} \right),
    \end{align*}
    adjusting the inequality \eqref{eq:ikStarUp}. 
\end{proof}
%%%%%%%%%%%

Note that if $q\geq 1$, the scaling function $\rho_k(\cdot)$ \eqref{eq:rhoBarxPth} is the same for both $p=2q$ and $p=2q+1$, which requires $2q$th-order oracles. As such, if $p$ is even (i.e., $p=2q$), then Algorithm~\ref{alg:superFSOA} is a $2q$th-order method and obtaining the complexity of order $\mathcal{O}(\varepsilon^{-2/(6q+1)})$, which the same as the optimal complexity bound. However, if $p$ is odd ($p=2q+1$), then Algorithm~\ref{alg:superFSOA} is again a $2q$th-order method attaining the complexity of order $\mathcal{O}(\varepsilon^{-1/(3q+1)})$, which is always better than the optimal complexity $\mathcal{O}(\varepsilon^{-2/(6q+1)})$; cf. \cite{nesterov2020inexact}.

\subsection{Application to a structured separable function}\label{sec:appSSF}
    Let us consider the univariate functions $\func{f_i}{\R}{\R}$ that are $p$ times continuously differentiable, for $i=1,\ldots,N$. Then, we define the function $\func{f}{\R^n}{\R}$ as
    \begin{equation}\label{eq:funcExa}
        f(x)=\sum_{i=1}^N f_i(\innprod{a_i}{x}-b_i),
    \end{equation}
    where $b\in\R^N$ and $a_i\in\R^n$. Here, we are interested to the composite minimization of the form \eqref{eq:prob}, i.e.,
    \begin{equation}\label{eq:strSepFunc}
        \min_{x\in\dom \psi}~\set{\sum_{i=1}^N f_i(\innprod{a_i}{x}-b_i)+\psi(x)}.
    \end{equation}
    Therefore, for $q=\floor{p/2}$, Algorithm~\ref{alg:superFSOA} is a $2q$th-order method with the convergence rate $\mathcal{O}(k^{-\tfrac{3p+1}{2}})$. In the remainder of this section, we verify efficient implementation of Algorithm~\ref{alg:superFSOA} to this problem. 
    
    \paragraph{{\bf Second-order method with complexity $\mathcal{O}(\varepsilon^{-(2/(3p+1))})$.}} We now consider applying Algorithm~\ref{alg:superFSOA} the problem \eqref{eq:strSepFunc} for an arbitrary $p$. Setting $q=\floor{p/2}$ and considering the scaling function \eqref{eq:rhoBarxPth}, it is required to solve the subproblem 
    \begin{align*}
        \min_{z\in\dom\psi} \set{\innprod{c}{z}+2L\sum_{k=1}^{q} \tfrac{1}{(2k)!} D^{2k} f(y_k)[z]^{2k}+\psi(z)+\tfrac{2LH}{p+1}\|z\|^{p+1}},
    \end{align*}
    with
    \begin{align*}
        c=\nabla f_{y_k,H}(z_i)-2L\sum_{k=1}^{q} \tfrac{1}{(2k-1)!} D^{2k} f(y_k)[z_i-y_k]^{2k-1}-2LH \|z_i\|^{p-1} B(z_i-y_k).
    \end{align*}
    Thus, for implementation of Algorithm~\ref{alg:superFSOA}, we need $2q$th-order oracle of $f_i(\cdot)$, for $i=1,\ldots,N$. Concidering the structure of the function $f(\cdot)$, it is clear that 
    \begin{align*}
        &\innprod{\nabla^2 f(y_k)z}{z}= \sum_{i=1}^N \nabla^2 f_i(\innprod{a_i}{y_k}-b_i) \innprod{a_i}{z}^2,\\ 
        &D^{2j} f(y_k)[z]^{2j}= \sum_{i=1}^N \nabla^{2j} f_i(\innprod{a_i}{y_k}-b_i) \innprod{a_i}{z}^{2j} \quad 1\leq j\leq q.
    \end{align*}
    Let us particularly verify these terms for $f_i(x)=-\log(x)$ ($i=1,\ldots,N$) for $x\in (0,+\infty)$, which consequently leads to
    \begin{align*}
        \nabla^2f_i(x)=\tfrac{1}{x^2}, \quad \nabla^{2j}f_i(x)=\tfrac{(2j)!}{x^4}=(2j)!\left(\nabla^2f_i(x)\right)^j\quad 1\leq j\leq q,
    \end{align*}
    i.e.,
    \begin{equation}\label{eq:d4Example}
        D^{2j} f(y_k)[z]^{2j} = (2j)!\sum_{i=1}^N \left(\nabla^2f_i(\innprod{a_i}{y_k}-b_i)\right)^j \innprod{a_i}{z}^{2j}.
    \end{equation}
    As a result, the implementation of Algorithm~\ref{alg:superFSOA} with even $p=2q$ and odd $p=2q+1$ only requires the second-order oracle of $f_i(\cdot)$ ($i=1,\ldots,N$) and the first-order oracle of $\psi(\cdot)$. Hence, we end up with a second-order method with the complexity of order $\mathcal{O}(\varepsilon^{-2/(6q+1)})$ for $p=2q$ and $\mathcal{O}(\varepsilon^{-1/(3q+1)})$ for $p=2q+1$ (see Theorem~\ref{thm:CompSupFP1OA}), which are much faster than the second-order methods optimal bound $\mathcal{O}(\varepsilon^{-2/7})$ for $p\geq 2$. Finally, these complexities are better than those of 
    \cite[Algorithm~4]{ahookhosh2020inexact}.

\section{Conclusion}\label{sec:conclusion}
In this paper, we investigated a general framework (BiOPT) for designing and developing high-order methods for convex composite functions attaining the complexity better than the classical optimal  bounds. In this framework, we first regularized the objective function with by a power of the Euclidean norm $\|\cdot\|^{p+1}$ for some $p\geq 1$ including a segment search along a specific direction. Minimizing this function leads to a high-order proximal-point segment search operator, which can be solves either exactly or inexactly. The BiOPT framework involves two levels of methodologies, upper and lower ones. At the upper level, an accelerated scheme was designed using approximate solution of the high-order proximal-point auxiliary minimization with segment search and the estimating sequence technique. At the lower level, the auxiliary problem was minimized inexactly by a combination of a non-Euclidean composite gradient method (approximating the solution of the high-order proximal-point minimization) and the bisection technique (approximating the segment search step-size). 

It was shown that if $p$ is even ($p=2q$), the proposed $2q$th-order method obtains the convergence rate $\mathcal{O}(k^{-(6q+1)/2})$ for $q=\floor{p/2}$, which is the same as the optimal rate of $2q$th-order methods. On the other hand, if $p$ is odd ($p=2q+1$), then the proposed $2q$th-order method attains the convergence rate $\mathcal{O}(k^{-(3q+1)})$, which is faster than the optimal convergence rate of $2q$th-order methods leading to a superfast method. We here emphasize that developing such methods with the complexity better than the classical optimal bounds is not a contradiction because our methods assumed the uniform boundedness of $(p+1)$th derivative ($M_{p+1}(f)>0$) which is stronger than the Lipschitz continuity of of $2q$th derivatives (see Table~\ref{tab:complexity}). 

% \begin{tcolorbox}[tabularx={X|X|X},colback=gray!5!white,colframe=black!50!black,
%   colbacktitle=gray!95!black,title=Complexity of Convex Optimization]
% cell A & Cell B & Cell C\\\hline
% cell A & Cell B & Cell C
% \end{tcolorbox}

% \begin{tcolorbox}[enhanced,tabularx={X|X|X},colback=gray!5!white,arc=0mm]
% \thead{Method} & \thead{Complexity} & \thead{Assumptions}\\ \hline
% $1$st-order methods & $\mathcal{O}(\varepsilon^{-2})$ & Lipschitz continuity of gradients\\ \hline
% $2$nd-order methods & $\mathcal{O}(\varepsilon^{-7/2})$ & Lipschitz continuity of Hessians\\ \hline
% \end{tcolorbox}

\begin{table}[H]
    \centering
    \scalebox{0.85}{
    \begin{tabular}{|l|l|l|} 
    \hline
    Oracle & Analytical complexity & Assumptions\\ [0.5ex] 
    \hline%\hline
    $1$st & optimal, $\mathcal{O}(\varepsilon^{-\tfrac{1}{2}})$, e.g., \cite{nesterov2005smooth,nesterov2013gradient,neumaier2016osga,ahookhosh2019accelerated,ahookhosh2017optimal} & Lip. cont. of gradients \\
    \hline
    $2$nd & optimal, $\mathcal{O}(\varepsilon^{-\tfrac{2}{7}})$  & Lip. cont. of Hessians \\
    \hline
    $2$nd & superfast, $\mathcal{O}(\varepsilon^{-\tfrac{1}{4}})$ \cite{nesterov2020superfast,nesterov2020inexact}, $\mathcal{O}(\varepsilon^{-\tfrac{1}{5}})$ \cite{nesterov2020inexactSS} & $M_2(f), M_4(f)<\infty$ \\
    \hline
    $p$th & optimal, $\mathcal{O}(\varepsilon^{-\tfrac{2}{3p+1}})$ & Lip. cont. of $p$th derivatives \\
    \hline
    $2q$th & superfast, $p=3$, $\mathcal{O}(\varepsilon^{-\tfrac{1}{4}})$; suboptimal, $p\neq 3$, $\mathcal{O}(\varepsilon^{-\tfrac{1}{p+1}})$ \cite{ahookhosh2020inexact}  & $M_2(f), M_4(f), M_{p+1}(f)<\infty$ \\
    \hline
    $2q$th & optimal, $p=2q$, $\mathcal{O}(\varepsilon^{-\tfrac{2}{6q+1}})$; superfast, $p=2q+1$,  $\mathcal{O}(\varepsilon^{-\tfrac{1}{3q+1}})$ [*]  & $M_2(f), M_4(f), M_{p+1}(f)<\infty$ \\
    \hline
    \end{tabular}
    }
    \vspace{1mm}
    \caption{Analytical complexities of optimization methodologies with different assumptions for convex (composite) problems in which $q=\floor{p/2}$ and [*] stands for the current paper. \label{tab:complexity}}
\end{table}

An efficient implementation of algorithms based on BiOPT framework entails finding a solution for the auxiliary problem \eqref{eq:zk0} that consequently requires effective application of the non-Euclidean first-order method (Algorithm~\ref{alg:zk1}). We note that such implementations need first-order oracle of $f(\cdot)$ and $\psi(\cdot)$ and also the computation of $D^{2i} f(y_k)[h]^{2i-1}$, for $i=1,\ldots,N$. One may substantially improve this procedure by applying superlinearly convergent non-Euclidean methods (e.g., \cite{ahookhosh2021bregman}) for solving the auxiliary problem \eqref{eq:zk0}.

%%%%%%%%%%%%%%%%%%%%%%%%%%%%%%%%%%%%%%%%%%%%%%%%%%%%%%%%%%%%%%%%%%%%%%%%%%%%%%%%%%
%%%%%%%%%%%%%%%%%%%%%%%%%%%%%%%%%%%%%%%%%%%%%%%%%%%%%%%%%%%%%%%%%%%%%%%%%%%%%%%%%%
%  \ifarxiv
%  		\bibliographystyle{plain}
%  	\else
		%\phantomsection
		\addcontentsline{toc}{section}{References}
		\bibliographystyle{spmpsci.bst}
%  \fi
	\bibliography{Bibliography}

\end{document}